\theoremstyle{plain}
\newtheorem{thm}{Theorem}[section]
\newtheorem{pro}[thm]{Proposition}
\newtheorem{lem}[thm]{Lemma}
\newtheorem{cor}[thm]{Corollary}
\newtheorem{theoalph}{Theorem}
\newtheorem{proalph}[theoalph]{Proposition}
\theoremstyle{definition}
\newtheorem{rem}[thm]{Remark}
\numberwithin{equation}{section}       
\begin{document}
\selectlanguage{english}

\title[The group generated by the automorphisms of $\mathbb{P}^n_\mathbb{C}$ and the standard involution]{Some properties of the group of birational maps generated by the automorphisms of $\mathbb{P}^n_\mathbb{C}$ \\
and the standard involution}

\thanks{The author is supported by ANR Grant "BirPol"  ANR-11-JS01-004-01.}

\author{Julie D\'eserti}
\address{Institut de Math\'ematiques de Jussieu-Paris Rive Gauche, UMR $7586$, Universit\'e 
Paris $7$, B\^atiment Sophie Germain, Case $7012$, $75205$ Paris Cedex 13, France.}
\email{deserti@math.univ-paris-diderot.fr}

\maketitle

\begin{abstract}
We give some properties of the subgroup $G_n(\mathbb{C})$ of the group of birational self-maps of~$\mathbb{P}^n_\mathbb{C}$ generated by the standard involution and the group of automorphisms of $\mathbb{P}^n_\mathbb{C}$. We prove that there is no nontrivial finite-dimensional linear representation of $G_n(\mathbb{C})$. We also establish that $G_n(\mathbb{C})$ is perfect, and that $G_n(\mathbb{C})$ equipped with the Zariski topology is simple. Furthermore if $\varphi$ is an automorphism of~$\mathrm{Bir}(\mathbb{P}^n_\mathbb{C})$, then up to birational conjugacy, and up to the action of a field automorphism $\varphi_{\vert G_n(\mathbb{C})}$ is trivial.

\noindent{\it 2010 Mathematics Subject Classification. --- 14E05, 14E07}
\end{abstract}

\section{Introduction}

The group $\mathrm{Bir}(\mathbb{P}^2_\mathbb{C})$ of birational self-maps of $\mathbb{P}^2_\mathbb{C}$, also called the Cremona group of rank~$2$, has been the object of a lot of studies. For finite subgroups let us mention for example \cite{Blanc:gpefini, DolgachevIskovskikh, BogomolovProkhorov}; other subgroups have been dealt with (\cite{Deserti:autbir, Deserti:nilpotent}), and some group properties have been established (\cite{Deserti:autbir, Deserti:hopfien, Deserti:sln, CerveauDeserti, Cantat:tits, CantatLamy, Blanc, Blanc:relation, Blanc:ssgpalg}). One can also find a lot of properties between algebraic geometry and dynamics (\cite{DillerFavre, CerveauDeserti:centralisateur, BlancDeserti:degreegrowth}). The Cremona group in higher dimension is far less well known; let us mention some references about finite subgroups (\cite{Prokhorov:simplefinitedim3, Prokhorov:invP3, Prokhorov:pelementarysubgoups, Popov3, Mundet}), about algebraic subgroups of maximal rank (\cite{Demazure, Vinberg, Umemura1, Umemura2, Umemura3}), about other subgroups (\cite{Popov1, Popov2}), about (abstract) homomorphisms from $\mathrm{PGL}(r+1;\mathbb{C})$ to the group $\mathrm{Bir}(M)$ where $M$ denotes a complex projective variety (\cite{Cantat}), and about maps of small bidegree (\cite{Pan, PanRongaVust, Pan:33map, Hudson, DesertiHan}). 

In this article we consider the subgroup of birational self-maps of $\mathbb{P}^n_\mathbb{C}$ introduced by Coble in~\cite{Coble}
\[
G_n(\mathbb{C})=\langle \sigma_n,\,\mathrm{Aut}(\mathbb{P}^n_\mathbb{C})\rangle
\]
where $\sigma_n$ denotes the involution 
\[
(z_0:z_1:\ldots:z_n)\dashrightarrow\left(\prod_{\stackrel{i=0}{i\not=0}}^nz_i:\prod_{\stackrel{i=0}{i\not=1}}^nz_i:\ldots:\prod_{\stackrel{i=0}{i\not=n}}^nz_i\right). 
\]
Hudson also deals with this group (\cite{Hudson}):
\begin{quotation}
"For a general space transformation, there is nothing to answer either to a plane characteristic or Noether theorem. There is however a group of transformations, called punctual because each is determined by a set of points, which are defined to satisfy an analogue of Noether theorem, and possess characteristics, and for which we can set up parallels to a good deal of the plane theory."
\end{quotation}

Note that the maps of $G_3(\mathbb{C})$ are in fact not so "punctual" (\cite[\S 8]{BlancHeden}). It follows from Noether theorem (\cite{AlberichCarraminana, Shafarevich}) that $G_2(\mathbb{C})$ coincides with $\mathrm{Bir}(\mathbb{P}^2_\mathbb{C})$; it is not the case in higher dimension where~$G_n(\mathbb{C})$ is a strict subgroup of $\mathrm{Bir}(\mathbb{P}^n_\mathbb{C})$ (\emph{see} \cite{Hudson, Pan}). However the following theorems show that $G_n(\mathbb{C})$ shares good properties with $G_2(\mathbb{C})=\mathrm{Bir}(\mathbb{P}^2_\mathbb{C})$.

\medskip

In \cite{CerveauDeserti} we proved that for any integer $n\geq 2$ the group $\mathrm{Bir}(\mathbb{P}^n_\Bbbk)$, where $\Bbbk$ denotes an algebraically closed field, is not linear; we obtain a similar statement for $G_n(\Bbbk)$, $n\geq 2$:

\begin{theoalph}\label{thm:subgroup}
{\sl If $\Bbbk$ is an algebraically closed field, there is no nontrivial finite-dimensional linear representation of $G_n(\Bbbk)$ over any field.}
\end{theoalph}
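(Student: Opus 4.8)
The plan is to exploit the rigidity of the subgroup $\mathrm{Aut}(\mathbb{P}^n_\Bbbk)=\mathrm{PGL}(n+1;\Bbbk)$ against the very special relations satisfied by $\sigma_n$. Fix a field $K$ and a finite-dimensional representation $\rho\colon G_n(\Bbbk)\to\mathrm{GL}(m;K)$; the goal is to show $\rho$ is trivial. Since $\Bbbk$ is algebraically closed, every element of $\Bbbk^*$ is an $(n+1)$-th power, so the reduction map identifies $\mathrm{PGL}(n+1;\Bbbk)=\mathrm{PSL}(n+1;\Bbbk)$, which is a simple, hence perfect, abstract group. Restricting $\rho$ to this subgroup, its kernel is normal in a simple group, so $\rho|_{\mathrm{PGL}(n+1;\Bbbk)}$ is either trivial or injective, and I would treat these two cases separately.

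Suppose first that $\rho$ kills $\mathrm{PGL}(n+1;\Bbbk)$. As $G_n(\Bbbk)$ is generated by $\mathrm{PGL}(n+1;\Bbbk)$ together with the single involution $\sigma_n$, the image $\rho(G_n(\Bbbk))$ is then generated by $\rho(\sigma_n)$, an element of order at most $2$; it is therefore abelian and $\rho$ factors through the abelianization $G_n(\Bbbk)^{\mathrm{ab}}$. I would then invoke the perfectness of $G_n(\Bbbk)$ --- equivalently, exhibit a relation $w=\mathrm{id}$ in which $\sigma_n$ occurs an odd number of times, built from $\sigma_n$ and the monomial maps --- to force $\rho(\sigma_n)=\mathrm{id}$, whence $\rho$ is trivial. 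This is exactly the phenomenon underlying the perfectness statement, and it disposes of the first case cleanly.

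The main case, and the principal difficulty, is when $\rho|_{\mathrm{PGL}(n+1;\Bbbk)}$ is injective. Here injectivity alone is harmless, since $\mathrm{PGL}(n+1;\Bbbk)$ is already linear over $\Bbbk$ itself; the non-triviality must be extracted from $\sigma_n$. The plan is to apply the Borel--Tits theorem on abstract homomorphisms of semisimple algebraic groups: as $\mathrm{PGL}(n+1;\Bbbk)$ is simple, isotropic and generated by its unipotent elements, the injection $\rho|_{\mathrm{PGL}(n+1;\Bbbk)}$ is, up to a field embedding $\Bbbk\hookrightarrow\overline{K}$, the restriction of a rational representation. In particular the image $\rho(T)$ of the diagonal torus $T\cong(\Bbbk^*)^n$ is an algebraic torus, and $\rho(\mathrm{PGL}(n+1;\Bbbk))$ is Zariski-dense in a connected simple group of type $A_n$.

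It then remains to play the relations linking $\sigma_n$ to this algebraic structure against that rigidity. A direct computation gives $\sigma_n\,t\,\sigma_n^{-1}=t^{-1}$ for every $t\in T$, while $\sigma_n\,w=w\,\sigma_n$ for every permutation matrix $w\in S_{n+1}\subset\mathrm{PGL}(n+1;\Bbbk)$; thus $\rho(\sigma_n)$ is an involution centralizing $\rho(S_{n+1})$ and inverting $\rho(T)$. For $n\ge 2$ the inversion $t\mapsto t^{-1}$ is not induced by any element of the Weyl group, so $\rho(\sigma_n)$ would realize an automorphism of $\rho(T)$ that is external to the algebraic group; combining this with the finite-order relations satisfied by the elements $\sigma_n g$ that have finite order (for instance the order-$6$ element $\sigma_n c$ attached to a cyclic permutation $c$) over-determines the conjugation action of $\rho(\sigma_n)$ and yields a contradiction, forcing $\rho|_{\mathrm{PGL}(n+1;\Bbbk)}$ to have been trivial after all. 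I expect this last step --- ruling out the injective case by pitting the inversion and centralizer relations against the structure supplied by Borel--Tits --- to be the main obstacle, and the point at which the specific geometry of $\sigma_n$, as opposed to that of an arbitrary involution, is indispensable.
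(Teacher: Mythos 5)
Your Case 1 is fine, and can even be shortened: since $-\mathrm{id}$ and $\sigma_n$ are conjugate \emph{inside} $G_n(\Bbbk)$ (via $\psi=\mathfrak{a}_1\sigma_n\mathfrak{a}_2$, as in the proof of Proposition \ref{pro:nonlin2}), once $\rho$ kills $\mathrm{PGL}(n+1;\Bbbk)$ you get $\rho(\sigma_n)=\rho(\psi)\rho(-\mathrm{id})\rho(\psi)^{-1}=\mathrm{id}$ directly, without invoking perfectness. The genuine gap is in your Case 2, which is where all the content lies. The relations you propose to play against Borel--Tits rigidity --- $\sigma_n t\sigma_n^{-1}=t^{-1}$ on the diagonal torus $T$, $\sigma_n w=w\sigma_n$ for permutations $w$, and the orders of the commuting products $\sigma_n c$ --- are \emph{consistent} with a faithful restriction, so no contradiction can follow from them. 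Explicit countermodel: let $\rho$ restrict to the adjoint representation $\mathrm{Ad}$ on $V=\mathfrak{sl}(n+1;\Bbbk)$ and set $\rho(\sigma_n)=\theta_*\in\mathrm{GL}(V)$, $\theta_*(X)=-{}^{\mathrm{t}}X$. Then $\theta_*$ is an involution with $\theta_*\,\mathrm{Ad}(\mathfrak{g})\,\theta_*^{-1}=\mathrm{Ad}(\mathfrak{g}^{\vee})$ where $\mathfrak{g}^{\vee}=({}^{\mathrm{t}}\mathfrak{g})^{-1}$; since $\vee$ inverts $T$ and fixes every permutation matrix (${}^{\mathrm{t}}w=w^{-1}$), all of your constraints hold while $\rho\vert_{\mathrm{PGL}(n+1;\Bbbk)}$ is injective. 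Your key claim --- that inverting $T$ is ``external to the algebraic group'' because $-1$ is not in the Weyl group of $A_n$ --- conflates \emph{inner} with \emph{algebraic}: inversion is realized by the outer algebraic automorphism $\vee$, and nothing you list prevents $\rho(\sigma_n)$ from inducing it. (Note also that $\sigma_n$ does not normalize $\mathrm{PGL}(n+1;\Bbbk)$ inside $G_n(\Bbbk)$, so you only control the conjugation action of $\rho(\sigma_n)$ on the torus normalizer, exactly where $\vee$ satisfies everything.) To exclude the $\vee$-twisted assignment you need a relation coupling $\sigma_n$ to linear maps outside the torus normalizer, such as $(\mathfrak{h}_n\sigma_n)^3=\mathrm{id}$ with $\mathfrak{h}_n=\left(\frac{z_0}{z_0-1},\frac{z_0-z_1}{z_0-1},\ldots,\frac{z_0-z_{n-1}}{z_0-1}\right)$; this is precisely the relation the paper uses in the proof of Theorem \ref{thm:autbirpn} to kill the $\vee$ case there. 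Separately, your Borel--Tits step is not routine for an arbitrary target field $K$ (possible characteristic mismatch, passage to the Zariski closure and its reductive quotient), and you would still owe the verification that the putative rational extension plus the extra relation leads to a contradiction.

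For comparison, the paper's proof avoids rigidity entirely and never needs the trivial-or-injective dichotomy: it proves the restriction to $\mathrm{PGL}(n+1;\Bbbk)$ is \emph{never} faithful, using an element of $G_n(\Bbbk)$ that your proposal does not exploit. Lemma \ref{lem:pratique} puts the quadratic map $\varsigma=(z_0z_{n-1},z_1z_{n-1},\ldots,z_{n-2}z_{n-1},z_{n-1})$ (affine chart $z_n=1$) in $G_n(\Bbbk)$, giving a copy of $(\Bbbk^*)^n\rtimes\mathbb{Z}$. For $p$ prime and $\xi_p$ a primitive $p$-th root of unity, the scalar map $\mathfrak{g}_p$ satisfies $[\varsigma,\mathfrak{g}_p]=\mathfrak{h}_p=(\xi_pz_0,\ldots,\xi_pz_{n-2},z_{n-1})$, a nontrivial element of $\mathrm{PGL}(n+1;\Bbbk)$ of order $p$ commuting with both $\varsigma$ and $\mathfrak{g}_p$; Birkhoff's lemma then forces any representation of dimension $k<p$ to kill $\mathfrak{h}_p$, simplicity of $\mathrm{PGL}(n+1;\Bbbk)$ kills all of $\mathrm{PGL}(n+1;\Bbbk)$, and the conjugacy of $\sigma_n$ with $-\mathrm{id}$ finishes. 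So your Case 2 must be replaced: either by this Heisenberg-type commutator trick, or by actually carrying out the $(\mathfrak{h}_n\sigma_n)^3=\mathrm{id}$ computation against the Borel--Tits normal form --- as written, the ``over-determination'' step does not exist.
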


The group $G_n(\mathbb{C})$ contains some "big" subgroups:

\begin{proalph}\label{pro:subgroup}
{\sl  
\begin{itemize}
\item The group of polynomial automorphisms of $\mathbb{C}^n$ generated by the affine automorphisms and the Jonqui\`eres ones is a subgroup of $G_n(\mathbb{C})$.

\item If $\mathfrak{g}_0$, $\mathfrak{g}_1$, $\ldots$, $\mathfrak{g}_k$ are some generic automorphisms of~$\mathbb{P}^n_\mathbb{C}$, then $\langle \mathfrak{g}_0\sigma_n,\,\mathfrak{g}_1\sigma_n,\,\ldots,\,\mathfrak{g}_k\sigma_n\rangle\subset G_n(\mathbb{C})$ is a free subgroup of $G_n(\mathbb{C})$.
\end{itemize}}
\end{proalph}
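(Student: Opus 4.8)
The plan is to treat the two assertions separately: the first is essentially a matter of rewriting a known set of generators, while the second is a genericity argument whose technical core is the multiplicativity of the degree under composition.

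\smallskip
\noindent\textbf{First bullet.} Every affine automorphism of $\mathbb{C}^n$ is the restriction of an element of $\mathrm{Aut}(\mathbb{P}^n_\mathbb{C})$, hence already lies in $G_n(\mathbb{C})$. For the de Jonqui\`eres (triangular) automorphisms I would invoke the standard description of the tame group: together with the affine maps it is generated by the elementary automorphisms, and since coordinate permutations are affine it suffices to realise the shears $e_P\colon(x_1,\dots,x_n)\mapsto(x_1,\dots,x_{n-1},x_n+P(x_1,\dots,x_{n-1}))$ inside $G_n(\mathbb{C})$. As $e_P e_Q=e_{P+Q}$ and monomials span the polynomials, it is enough to treat monomial $P$. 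The plan is then to exhibit each such $e_P$ as an explicit alternating word $\mathfrak{a}_0\sigma_n\mathfrak{a}_1\sigma_n\cdots\sigma_n\mathfrak{a}_r$ in $\sigma_n$ and affine maps, proceeding by induction on $n$ and on $\deg P$, and to verify the identity by direct composition. The main obstacle here is precisely this explicit realisation: because $\sigma_n$ inverts all coordinates at once it entangles the variables, so producing a map that moves only $x_n$ by a prescribed monomial requires careful bookkeeping of the auxiliary affine factors.

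\smallskip
\noindent\textbf{Second bullet.} Write $\mathfrak{h}_i=\mathfrak{g}_i\sigma_n$. I want the $\mathfrak{h}_i$ to generate freely whenever $(\mathfrak{g}_0,\dots,\mathfrak{g}_k)$ lies outside a countable union of proper subvarieties of $\mathrm{PGL}(n+1;\mathbb{C})^{k+1}$. Using $\sigma_n^2=\mathrm{id}$, any element of $\langle\mathfrak{h}_0,\dots,\mathfrak{h}_k\rangle$ can be put in the form $\mathfrak{a}_0\sigma_n\mathfrak{a}_1\sigma_n\cdots\sigma_n\mathfrak{a}_m$ with $\mathfrak{a}_j\in\mathrm{PGL}(n+1;\mathbb{C})$; the only cancellations $\sigma_n\sigma_n=\mathrm{id}$ occur at the junctions $\mathfrak{h}_i\mathfrak{h}_j^{-1}=\mathfrak{g}_i\mathfrak{g}_j^{-1}$, and since $\sigma_n$ conjugates a monomial linear map to a linear map, an intermediate factor $\mathfrak{a}_j$ survives without further collapse as soon as it is non-monomial. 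For a reduced word in the $\mathfrak{h}_i$ this is automatic for generic $\mathfrak{g}_i$, because every surviving $\mathfrak{a}_j$ is a nonempty product of the $\mathfrak{g}_i^{\pm 1}$ and being monomial is a proper closed condition.

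\smallskip
Freeness then follows from a dichotomy on the reduced length $m$. If $m\ge 1$, the multiplicativity of the degree under composition in general position gives $\deg(\mathfrak{a}_0\sigma_n\cdots\sigma_n\mathfrak{a}_m)=(\deg\sigma_n)^m=n^m>1$ (each component of $\sigma_n$ has degree $n$, so $\deg\sigma_n=n$), whence the element is nonlinear and in particular $\ne\mathrm{id}$. If the word collapses completely ($m=0$), it equals a nonempty reduced word in the $\mathfrak{g}_i^{\pm 1}$, which is $\ne\mathrm{id}$ as soon as the $\mathfrak{g}_i$ generate a free subgroup of $\mathrm{PGL}(n+1;\mathbb{C})$ — again a property of generic tuples. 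Each nontrivial reduced word thus imposes a proper closed condition on $(\mathfrak{g}_0,\dots,\mathfrak{g}_k)$, properness being witnessed by a single good tuple, and intersecting the complements over the countably many words yields a generic set on which the $\mathfrak{h}_i$ are free. The hard part is the degree computation when $m\ge 1$: one must control the base locus of $\sigma_n$ and check that generic linear factors keep the successive homaloidal systems in general position, so that no base points are absorbed and the degree genuinely multiplies by $n$ at each occurrence of $\sigma_n$.
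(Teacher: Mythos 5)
Your plan for the first bullet stops at exactly the point where all the content lies. The reduction to monomial shears $e_P$ via $e_Pe_Q=e_{P+Q}$ is fine, but you never produce the alternating words in $\sigma_n$ and affine maps realising these shears, and the announced ``induction on $n$ and on $\deg P$'' comes with no induction step; as you yourself note, this explicit realisation is the main obstacle, and without it nothing is proved. The paper avoids the infinite family of constructions altogether by invoking Derksen's theorem (Theorem \ref{thm:derksen}): for $n\geq 3$ the tame group is generated by $\mathrm{Aff}_n$ together with the \emph{single} quadratic shear $\big(z_0+z_1^2,z_1,\ldots,z_{n-1}\big)$, so only one identity is needed, and it is written down explicitly,
\[
\big(z_0z_n+z_1^2:z_1z_n:\ldots:z_{n-1}z_n:z_n^2\big)=\mathfrak{g}_1\sigma_n\mathfrak{g}_2\sigma_n\mathfrak{g}_3\sigma_n\mathfrak{g}_2\sigma_n\mathfrak{g}_4
\]
with concrete linear maps $\mathfrak{g}_1,\ldots,\mathfrak{g}_4$. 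To salvage your route you would either have to carry out the nontrivial construction for every monomial shear, or reduce, as the paper does, to one generator.

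The second bullet has the same defect: the scaffolding is sound but the crux is deferred. Your reduction of words to alternating form, the observation that conjugation by $\sigma_n$ linearises only monomial maps, the treatment of totally collapsing words via freeness of generic tuples in $\mathrm{PGL}(n+1;\mathbb{C})$, and the Baire scheme over countably many algebraic relation sets all match the framework of the paper's Proposition \ref{Pro:freegroup} (which in fact proves the stronger free-product statement $\mathbb{Z}\ast\ldots\ast\mathbb{Z}\ast\mathbb{Z}/2\mathbb{Z}$, freeness of $\langle\mathfrak{g}_i\sigma_n\rangle$ being a corollary). But the witness you offer for properness of each relation set $R_M$ is the claim $\deg(\mathfrak{a}_0\sigma_n\cdots\sigma_n\mathfrak{a}_m)=n^m$ in general position, which you assert and explicitly flag as ``the hard part'' without proof; degree drops can cascade through a long composition as base points get absorbed, so controlling them is genuinely delicate and is precisely what remains unestablished. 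The paper sidesteps all degree computations: it takes $\mathfrak{g}=\big(\alpha z_0+\beta z_1:\gamma z_0+\delta z_1:z_2:\ldots:z_n\big)$, notes that $\mathfrak{g}$ and $\sigma_n$ both preserve the pencil $z_0=tz_1$, and thereby gets a homomorphism to $\mathrm{PGL}(2;\mathbb{C})$ sending $\sigma_n$ to $t\mapsto 1/t$ and $\mathfrak{g}$ to a M\"obius transformation; since a generic M\"obius transformation together with $t\mapsto 1/t$ generates $\mathbb{Z}\ast\mathbb{Z}/2\mathbb{Z}$ (de la Harpe), each $R_M$ is a proper algebraic subset, and the Baire property concludes. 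Replacing your unproven multiplicativity claim by this pencil trick (or actually proving the claim) is what your argument still needs.
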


\begin{rem}
For the meaning of "generic" see the proof of Proposition \ref{Pro:freegroup}.
\end{rem}

In \cite{CerveauDeserti} we establish that $G_2(\mathbb{C})=\mathrm{Bir}(\mathbb{P}^2_\mathbb{C})$ is perfect, \emph{i.e.} $[G_2(\mathbb{C}),G_2(\mathbb{C})]=G_2(\mathbb{C})$; the same holds for any $n$:

\begin{theoalph}\label{thm:ptesgpes1}
{\sl If $\Bbbk$ is an algebraically closed field, $G_n(\Bbbk)$ is perfect.}
\end{theoalph}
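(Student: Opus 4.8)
The plan is to prove that $G_n(\Bbbk)$ equals its own commutator subgroup $[G_n(\Bbbk),G_n(\Bbbk)]$ by showing that the generators of $G_n(\Bbbk)$ — namely the automorphisms in $\mathrm{Aut}(\mathbb{P}^n_\Bbbk)=\mathrm{PGL}(n+1;\Bbbk)$ together with the standard involution $\sigma_n$ — all lie in the commutator subgroup. Since $G_n(\Bbbk)$ is generated by these elements, it suffices to express each of them as a product of commutators of elements of $G_n(\Bbbk)$.

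First I would handle the linear part. The group $\mathrm{PGL}(n+1;\Bbbk)$ is itself perfect when $\Bbbk$ is algebraically closed (indeed $\mathrm{SL}(n+1;\Bbbk)$ is perfect for $n+1\geq 2$, being generated by elementary matrices, and each elementary transvection is a commutator), so every element of $\mathrm{Aut}(\mathbb{P}^n_\Bbbk)$ is already a product of commutators \emph{inside} $G_n(\Bbbk)$. This disposes of all the generators coming from the automorphism group without any use of $\sigma_n$. The remaining and genuinely delicate task is to show that the involution $\sigma_n$ itself lies in $[G_n(\Bbbk),G_n(\Bbbk)]$.

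For $\sigma_n$ my plan is to exploit the normalization relations: $\sigma_n$ is normalized, up to composition with torus elements and coordinate permutations, by the diagonal and monomial subgroups of $\mathrm{PGL}(n+1;\Bbbk)$. Concretely, for a diagonal automorphism $d=\mathrm{diag}(t_0:\ldots:t_n)$ one computes that $d\,\sigma_n\,d^{-1}$ equals $\sigma_n$ composed with another diagonal element $d'$, where the entries of $d'$ are monomials in the $t_i$; thus the commutator $[d,\sigma_n]=d\,\sigma_n\,d^{-1}\sigma_n^{-1}$ is a diagonal automorphism. By choosing $d$ appropriately (using that $\Bbbk$ is algebraically closed, so that the relevant monomial map on the torus is surjective onto its image), one realizes a suitable diagonal element $\delta$ as such a commutator. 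The idea is then to write $\sigma_n=\tau\cdot(\delta^{-1}\sigma_n)$ or, better, to produce $\sigma_n$ as a product $[a,\sigma_n]\cdot c$ where the correcting factor $c$ is a linear automorphism already known to be a product of commutators by the previous paragraph. In other words, I would show $\sigma_n$ differs from an explicit commutator by an element of $\mathrm{PGL}(n+1;\Bbbk)\subset[G_n(\Bbbk),G_n(\Bbbk)]$, whence $\sigma_n\in[G_n(\Bbbk),G_n(\Bbbk)]$ as well.

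The main obstacle I anticipate is the bookkeeping for $\sigma_n$: verifying precisely which diagonal (and permutation) elements arise as $d\sigma_n d^{-1}\sigma_n^{-1}$, and checking that, over an algebraically closed field, one can solve the resulting system of monomial equations so that the correction term lands inside $\mathrm{PGL}(n+1;\Bbbk)$ rather than introducing a genuinely non-linear factor that cannot be absorbed. Algebraic closure is exactly what should make the needed roots available (so that the torus map $t\mapsto d'$ hits the required target), and $\sigma_n$ being an involution keeps the commutator computation tractable since $\sigma_n^{-1}=\sigma_n$. Once $\sigma_n$ and all of $\mathrm{Aut}(\mathbb{P}^n_\Bbbk)$ are shown to lie in $[G_n(\Bbbk),G_n(\Bbbk)]$, perfectness follows immediately because these elements generate $G_n(\Bbbk)$.
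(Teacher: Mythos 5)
Your first step is sound and agrees with the paper: over an algebraically closed field $\mathrm{PGL}(n+1;\Bbbk)$ is simple, hence perfect, so $\mathrm{Aut}(\mathbb{P}^n_\Bbbk)\subset[G_n(\Bbbk),G_n(\Bbbk)]$. The genuine gap is in your treatment of $\sigma_n$, and it is not a matter of bookkeeping. The relation you plan to exploit is exactly the paper's identity $\mathfrak{d}_\alpha\,\sigma_n\,\mathfrak{d}_\alpha^{-1}=\mathfrak{d}_\alpha^2\,\sigma_n$ (equation (\ref{eq:simpl})), which says $[\mathfrak{d}_\alpha,\sigma_n]=\mathfrak{d}_\alpha^2$: the commutator of a diagonal automorphism with $\sigma_n$ is itself \emph{linear} (and permutation matrices commute with $\sigma_n$ outright). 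Consequently any product $[a,\sigma_n]\cdot c$ with $a$ diagonal or monomial and $c$ linear is a linear map, and can never equal the degree-$n$ map $\sigma_n$; your fallback $\sigma_n=\tau\cdot(\delta^{-1}\sigma_n)$ is circular. More structurally, every relation your plan produces contains an even number of occurrences of $\sigma_n$, so all of them are compatible with a hypothetical character $\chi\colon G_n(\Bbbk)\to\mathbb{Z}/2\mathbb{Z}$ killing $\mathrm{PGL}(n+1;\Bbbk)$ and sending $\sigma_n$ to the nontrivial element. No accumulation of such even relations can force $\sigma_n\in[G_n(\Bbbk),G_n(\Bbbk)]$: what is needed is a relation in $G_n(\Bbbk)$ in which $\sigma_n$ occurs an odd number of times.

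The paper supplies precisely such a relation: $\sigma_n$ and the linear involution $-\mathrm{id}$ are conjugate \emph{inside} $G_n(\mathbb{C})$, via the map $\psi=\left(\frac{z_0+1}{z_0-1},\ldots,\frac{z_{n-1}+1}{z_{n-1}-1}\right)=\mathfrak{a}_1\sigma_n\mathfrak{a}_2$ from the proof of Proposition \ref{pro:nonlin2} --- note that $\psi$ contains a single copy of $\sigma_n$, which is what makes the resulting relation odd. Since $-\mathrm{id}$ lies in the perfect group $\mathrm{PGL}(n+1;\mathbb{C})\subset[G_n(\mathbb{C}),G_n(\mathbb{C})]$ and the commutator subgroup is normal, conjugating by $\psi$ puts $\sigma_n$ in $[G_n(\mathbb{C}),G_n(\mathbb{C})]$ as well, which is the missing half of your argument. (The paper in fact proves more: by Proposition \ref{pro:ssgpenormeng} the normal subgroup of $G_n(\mathbb{C})$ generated by $\sigma_n$, or by any nontrivial automorphism, is all of $G_n(\mathbb{C})$, and Corollary \ref{cor:prodconj} deduces perfectness by writing every element as a product of conjugates of the explicit linear commutator $\mathfrak{t}$; but the enabling fact is the same conjugation of $-\mathrm{id}$ to $\sigma_n$.) To repair your proof you must produce this conjugation or an equivalent odd-parity relation, such as $(\mathfrak{h}_n\sigma_n)^3=\mathrm{id}$ used later in the proof of Theorem \ref{thm:autbirpn}; the diagonal and monomial normalization relations alone are structurally insufficient.
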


In \cite{Deserti:autbir} we determine the automorphisms group of $G_2(\mathbb{C})=\mathrm{Bir}(\mathbb{P}^2_\mathbb{C})$; in higher dimensions we have a similar description. Before giving a precise result, let us introduce some notation: the group of the field automorphisms acts on $\mathrm{Bir}(\mathbb{P}^n_\mathbb{C})$: if $f$ is an element of~$\mathrm{Bir}(\mathbb{P}^n_\mathbb{C})$, and $\kappa$ is a field automorphism we denote by~${}^{\kappa}\!\, f$ the element obtained by letting~$\kappa$ acting on $f$. 

\begin{theoalph}\label{thm:ptesgpes2}
{\sl Let $\varphi$ be an automorphism of $\mathrm{Bir}(\mathbb{P}^n_\mathbb{C})$. There exist $\kappa$ an automorphism of the field $\mathbb{C}$, and $\psi$ a birational map of $\mathbb{P}^n_\mathbb{C}$ such that 
\[
\varphi(f)={}^{\kappa}\!\,(\psi f\psi^{-1}) \qquad \forall\,f\in G_n(\mathbb{C}).
\]}
\end{theoalph}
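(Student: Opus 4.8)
The plan is to reduce the problem to the behaviour of $\varphi$ on the subgroup $\mathrm{Aut}(\mathbb{P}^n_\mathbb{C})=\mathrm{PGL}(n+1;\mathbb{C})$ and on the involution $\sigma_n$, since together these generate $G_n(\mathbb{C})$. First I would restrict $\varphi$ to $\mathrm{PGL}(n+1;\mathbb{C})$; being the restriction of an automorphism, and $\mathrm{PGL}(n+1;\mathbb{C})$ being simple as an abstract group, it is an injective abstract group homomorphism $\mathrm{PGL}(n+1;\mathbb{C})\to\mathrm{Bir}(\mathbb{P}^n_\mathbb{C})$. The key external input is the rigidity of such homomorphisms (\cite{Cantat}): a nontrivial abstract homomorphism from $\mathrm{PGL}(n+1;\mathbb{C})$ into $\mathrm{Bir}(\mathbb{P}^n_\mathbb{C})$ is, up to birational conjugacy and up to the action of a field automorphism $\kappa$, the standard inclusion, possibly composed with the duality (graph) automorphism $g\mapsto {}^tg^{-1}$. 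I would then argue that the dual case is reduced to the standard one: on the normaliser $N=\langle T,\mathfrak{S}_{n+1}\rangle$ of the diagonal torus $T$ the duality coincides with conjugation by $\sigma_n$ (it inverts $T$ and fixes the permutation matrices), which suggests that the two embeddings are birationally conjugate and so lets one absorb the graph automorphism into the conjugating map. After replacing $\varphi$ by $f\mapsto {}^{\kappa^{-1}}(\psi^{-1}\varphi(f)\psi)$ for a suitable $\psi$ and $\kappa$, I may thus assume that $\varphi$ fixes $\mathrm{PGL}(n+1;\mathbb{C})$ pointwise.

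The next step is to pin down $\varphi(\sigma_n)$. In affine coordinates $\sigma_n$ is $x_i\mapsto 1/x_i$, so it normalises the diagonal torus $T$ and acts on it by inversion, $\sigma_n t\sigma_n^{-1}=t^{-1}$, and it commutes with the group $\mathfrak{S}_{n+1}$ of coordinate permutations inside $\mathrm{PGL}(n+1;\mathbb{C})$. Since $\varphi$ is now the identity on $\mathrm{PGL}(n+1;\mathbb{C})$, the element $h=\varphi(\sigma_n)$ satisfies the same relations: $hth^{-1}=t^{-1}$ for every $t\in T$ and $hph^{-1}=p$ for every $p\in\mathfrak{S}_{n+1}$. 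Consequently $h\sigma_n^{-1}$ centralises $T$. I would then invoke the fact that the centraliser of $T$ in $\mathrm{Bir}(\mathbb{P}^n_\mathbb{C})$ reduces to $T$ itself: indeed $T$ acts with a dense open orbit isomorphic to $(\mathbb{C}^*)^n$, and a birational self-map commuting with this action is a translation for the group law of the orbit, hence lies in $T$. Thus $h=t_0\sigma_n$ for some $t_0\in T$.

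Finally I would determine $t_0$. The relation $hph^{-1}=p$ for all $p\in\mathfrak{S}_{n+1}$, combined with $\sigma_n p\sigma_n^{-1}=p$, forces $pt_0p^{-1}=t_0$ for every permutation $p$; the only $\mathfrak{S}_{n+1}$-invariant elements of the diagonal torus of $\mathrm{PGL}(n+1;\mathbb{C})$ are the scalar matrices, which are trivial in $\mathrm{PGL}$, so $t_0=1$ and $\varphi(\sigma_n)=\sigma_n$. As $\varphi$ now fixes the generators $\sigma_n$ and $\mathrm{Aut}(\mathbb{P}^n_\mathbb{C})$ of $G_n(\mathbb{C})$, it is the identity on $G_n(\mathbb{C})$; undoing the normalisation yields $\varphi(f)={}^{\kappa}(\psi f\psi^{-1})$ for all $f\in G_n(\mathbb{C})$, after renaming $\psi$ so as to absorb the field action.

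The main obstacle I expect is the rigidity step: controlling the restriction $\varphi|_{\mathrm{PGL}(n+1;\mathbb{C})}$ and, within it, disposing of the duality automorphism. Everything downstream — the centraliser computation and the symmetry argument fixing $\sigma_n$ — is elementary once $\mathrm{PGL}(n+1;\mathbb{C})$ is normalised, but the normalisation itself rests on the classification of abstract $\mathrm{PGL}$-actions on $\mathbb{P}^n_\mathbb{C}$ and on the delicate point of checking that the graph automorphism contributes nothing new in the presence of $\sigma_n$.
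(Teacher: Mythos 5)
Your reduction via Cantat's theorem and your centraliser argument in the case $\varphi_{\vert\mathrm{PGL}(n+1;\mathbb{C})}=\mathrm{id}$ are sound; in fact the observation that $\varphi(\sigma_n)\sigma_n$ centralises the diagonal torus $T$, that the centraliser of $T$ in $\mathrm{Bir}(\mathbb{P}^n_\mathbb{C})$ is $T$ itself, and that the only $\mathfrak{S}_{n+1}$-invariant element of $T$ in $\mathrm{PGL}(n+1;\mathbb{C})$ is trivial gives a cleaner route to $\varphi(\sigma_n)=\sigma_n$ than the paper's explicit computation with $\eta=\left(z_0,\ldots,z_{n-2},\frac{1}{z_{n-1}}\right)$, the transpositions $\tau_i$ and the translation $\mathfrak{t}$. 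But your treatment of the dual case is a genuine gap, not a presentational one. You claim that since $\mathfrak{g}\mapsto\mathfrak{g}^\vee=({}^{\mathrm{t}}\mathfrak{g})^{-1}$ agrees with conjugation by $\sigma_n$ on the normaliser $N=T\rtimes\mathfrak{S}_{n+1}$, the dual embedding of $\mathrm{PGL}(n+1;\mathbb{C})$ should be birationally conjugate to the standard one, so the graph automorphism can be absorbed into $\psi$. This is false for $n\geq 2$: if some $\psi\in\mathrm{Bir}(\mathbb{P}^n_\mathbb{C})$ satisfied $\psi\mathfrak{g}\psi^{-1}=\mathfrak{g}^\vee$ for all $\mathfrak{g}$, the indeterminacy locus of $\psi$ would be invariant under the standard (transitive) $\mathrm{PGL}(n+1;\mathbb{C})$-action, hence empty, and likewise for $\psi^{-1}$; so $\psi$ would lie in $\mathrm{PGL}(n+1;\mathbb{C})$ and duality would be an inner automorphism of $\mathrm{PGL}(n+1;\mathbb{C})$, which it is not for $n\geq 2$. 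This is precisely why Cantat's theorem, which you invoke, states the two cases as genuinely distinct alternatives; the agreement of $\vee$ with conjugation by $\sigma_n$ on $N$ is true but exactly insufficient, since it shows that no relation supported in the torus normaliser can ever separate the two cases.

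The correct move --- and the one the paper makes --- is to show that the dual case cannot occur at all, by exploiting a relation in $G_n(\mathbb{C})$ mixing $\sigma_n$ with a linear map \emph{outside} $N$: setting $\mathfrak{h}_n=\left(\frac{z_0}{z_0-1},\frac{z_0-z_1}{z_0-1},\ldots,\frac{z_0-z_{n-1}}{z_0-1}\right)$, one has $(\mathfrak{h}_n\sigma_n)^3=\mathrm{id}$. Your own machinery still works in the dual case to pin down $\varphi(\sigma_n)$: since $t^\vee=t^{-1}$ on $T$ and $p^\vee=p$ on permutation matrices, the element $h=\varphi(\sigma_n)$ satisfies the same relations $hth^{-1}=t^{-1}$ and $hp=ph$, so again $h=\sigma_n$. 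But then applying $\varphi$ to the relation would force $(\mathfrak{h}_n^\vee\,\sigma_n)^3=\mathrm{id}$, which fails by direct computation --- this is the paper's closing step, $\big(\varphi(\mathfrak{h}_n)\varphi(\sigma_n)\big)^3\neq\mathrm{id}$. With this repair (contradiction in the dual case rather than absorption), your argument goes through and is otherwise parallel to, and in its second half slightly more conceptual than, the paper's proof.
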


The question "is the Cremona group simple ?" is a very old one; Cantat and Lamy recently gave a negative answer in dimension $2$ (\emph{see} \cite{CantatLamy}). One can consider the same question when $G_2(\Bbbk)$ is equipped with the Zariski topology ($\Bbbk$ denotes here an algebraically closed field) ; Blanc looked at it, and obtained a positive answer (\cite{Blanc}). What about $G_n(\Bbbk)$~?

\begin{proalph}\label{thm:ptesgpes3}
{\sl If $\Bbbk$ is an algebraically closed field, the group $G_n(\Bbbk)$, equipped with the Zariski topology, is simple.}
\end{proalph}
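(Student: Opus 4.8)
The plan is to show that every closed normal subgroup $N\trianglelefteq G_n(\Bbbk)$ with $N\neq\{\mathrm{id}\}$ is equal to $G_n(\Bbbk)$, reducing the whole statement to a single assertion about $\mathrm{Aut}(\mathbb{P}^n_\Bbbk)=\mathrm{PGL}_{n+1}(\Bbbk)$ and then invoking the perfectness of $G_n(\Bbbk)$ (Theorem~\ref{thm:ptesgpes1}). Since $\Bbbk$ is algebraically closed, $\mathrm{PGL}_{n+1}(\Bbbk)=\mathrm{PSL}_{n+1}(\Bbbk)$ is \emph{simple as an abstract group}, and it sits inside $G_n(\Bbbk)$ as an algebraic subgroup. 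I will use three structural inputs: this abstract simplicity, the perfectness of $G_n(\Bbbk)$, and the presentation $G_n(\Bbbk)=\langle\sigma_n,\mathrm{PGL}_{n+1}(\Bbbk)\rangle$ together with $\sigma_n^2=\mathrm{id}$.

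The easy reductions come first. For $N$ as above, the intersection $N\cap\mathrm{PGL}_{n+1}(\Bbbk)$ is a normal subgroup of $\mathrm{PGL}_{n+1}(\Bbbk)$, because conjugation by an automorphism preserves both $N$ and $\mathrm{Aut}(\mathbb{P}^n_\Bbbk)$; by abstract simplicity it is therefore either trivial or all of $\mathrm{PGL}_{n+1}(\Bbbk)$. Suppose we have shown it is nontrivial, so that $\mathrm{PGL}_{n+1}(\Bbbk)\subseteq N$. Then in $G_n(\Bbbk)/N$ the image of $\mathrm{Aut}(\mathbb{P}^n_\Bbbk)$ is trivial, so every element is a power of the image of $\sigma_n$; as $\sigma_n^2=\mathrm{id}$ this quotient is cyclic of order at most $2$, hence abelian. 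But $G_n(\Bbbk)/N$ is a quotient of the perfect group $G_n(\Bbbk)$, hence perfect, and a perfect abelian group is trivial. Thus $N=G_n(\Bbbk)$, and the theorem reduces to the single assertion $N\cap\mathrm{PGL}_{n+1}(\Bbbk)\neq\{\mathrm{id}\}$.

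The core of the argument, and the step I expect to be the main obstacle, is producing a nontrivial automorphism inside $N$ from an arbitrary nontrivial $g\in N$. Here closedness for the Zariski topology enters through the following principle: if $t\mapsto\gamma_t$ is an algebraic family of elements of $G_n(\Bbbk)$ over a curve, with $\gamma_t\in N$ for $t$ in a dense open subset, then $\gamma_{t_0}\in N$ for every $t_0$, since the preimage of the closed set $N$ is closed and dense. I would apply this to a \emph{renormalization} family. If $g\in\mathrm{Aut}(\mathbb{P}^n_\Bbbk)$ there is nothing to prove; otherwise the differential of $g$ is non-scalar at a general point. Granting a fixed point $p$ at which the linear part $A=\mathrm{D}g_p$ is non-scalar, choose affine coordinates with $p=0$ and let $h_t\in\mathrm{PGL}_{n+1}(\Bbbk)$ be the scaling $x\mapsto x/t$; the conjugates $h_t g h_t^{-1}$ form an algebraic family contained in $N$ whose limit as $t\to 0$ is the linear map $A$. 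Closedness then forces $A\in N\cap\mathrm{PGL}_{n+1}(\Bbbk)$, and $A\neq\mathrm{id}$ since it is non-scalar.

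The delicate points, where the real work lies, are twofold. First, a genuinely birational $g$ need not admit a fixed point, so one must instead locate a periodic point: if $g^m(p)=p$ with $\mathrm{D}(g^m)_p$ non-scalar, one renormalizes $g^m\in N$ at $p$; establishing the existence of such a point, using the freedom to replace $g$ by conjugates and products of conjugates that remain in $N$ by normality, is the crux. Second, one must verify that the contraction family $h_t g h_t^{-1}$ genuinely extends to an algebraic family across $t=0$ with the asserted linear limit, so that the closedness principle applies; this is a local computation near $p$ combined with the formalism of algebraic families à la Blanc (\cite{Blanc}). Granting these two points, the reductions above complete the proof.
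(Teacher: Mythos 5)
Your overall architecture coincides with the paper's: show that a nontrivial closed normal subgroup $N$ contains a nontrivial automorphism, upgrade to $\mathrm{Aut}(\mathbb{P}^n_\Bbbk)\subseteq N$ by abstract simplicity of $\mathrm{PGL}(n+1;\Bbbk)$ (the intersection $N\cap\mathrm{PGL}(n+1;\Bbbk)$ is normal in $\mathrm{PGL}(n+1;\Bbbk)$ since the latter normalizes $N$), and then conclude by a generation argument. The paper concludes differently: Proposition \ref{pro:chouetteinclusion} shows that $N$ contains not only $\mathrm{Aut}(\mathbb{P}^n_\Bbbk)$ but also $\sigma_n$, because $\sigma_n$ is conjugate to $-\mathrm{id}$ \emph{inside} $G_n$ (the map $\psi=\mathfrak{a}_1\sigma_n\mathfrak{a}_2$ from the proof of Proposition \ref{pro:nonlin2}), whence $N\supseteq\langle\sigma_n,\mathrm{Aut}(\mathbb{P}^n_\Bbbk)\rangle=G_n(\Bbbk)$, with Corollary \ref{cor:prodconj} available as reinforcement. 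Your endgame --- once $\mathrm{PGL}(n+1;\Bbbk)\subseteq N$, the quotient is generated by the image of $\sigma_n$, hence abelian of order at most $2$, hence trivial since a quotient of the perfect group $G_n(\Bbbk)$ (Theorem \ref{thm:ptesgpes1}) is perfect --- is a correct and slightly slicker variant that avoids needing $\sigma_n\in N$ at all.

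The genuine gap is the one you flag yourself and then ``grant'': the existence, for some element of $N$, of a fixed point at which the map is a local isomorphism with non-scalar differential. A nontrivial birational map need not have such a point, and passing to periodic points does not obviously help either; this is exactly the content of the lemma of Blanc that the paper's Proposition \ref{pro:chouetteinclusion} invokes via ``a similar argument as in \cite{Blanc}''. The standard resolution is a commutator trick you gesture at but do not execute: pick a general point $p$ with $q=g(p)\neq p$ at which $g$ is a local isomorphism, choose $h\in\mathrm{PGL}(n+1;\Bbbk)$ fixing both $p$ and $q$, and set $f=ghg^{-1}h^{-1}\in N$ (normality). Then $f(q)=q$, $f$ is a local isomorphism at $q$, and $\mathrm{D}f_q=\mathrm{D}g_p\,\mathrm{D}h_p\,(\mathrm{D}g_p)^{-1}(\mathrm{D}h_q)^{-1}$ can be made non-scalar by varying $h$ in the (large) stabilizer of the pair $(p,q)$. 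After this replacement your scaling family $t\mapsto h_t f h_t^{-1}$ applies, and your second delicate point --- that this family is a morphism through $t=0$ with limit the linear part --- is indeed a routine local computation with the Demazure--Serre definition. So your strategy is the right one, and the same one the paper delegates to \cite{Blanc}, but as written the crux of the argument is assumed rather than proved.
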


\subsubsection*{Organisation of the article} 
We first recall a result of Pan about the set of group ge\-nerators of~$\mathrm{Bir}(\mathbb{P}^n_\mathbb{C})$, $n\geq 3$ (\emph{see} \S \ref{sec:pan}); we then note that as soon as $n\geq 3$, there are birational maps of degree $n=\deg\sigma_n$ that do not belong to $G_n(\mathbb{C})$. In \S \ref{sec:nonlin} we prove Theorem~\ref{thm:subgroup}, and in \S \ref{sec:subgroup} Proposition \ref{pro:subgroup}. Let us remark that the fact that the group of tame automorphisms is contained in $G_n(\mathbb{C})$ implies that $G_n(\mathbb{C})$ contains maps of any degree, it was not obvious \emph{a priori}. In \S \ref{sec:algebraic} we study the normal subgroup in $G_n(\mathbb{C})$ generated by~$\sigma_n$ (resp. by an automorphism of $\mathbb{P}^n_\mathbb{C}$); it allows us to establish Theorem \ref{thm:ptesgpes1}. We finish \S\ref{sec:algebraic} with the proofs of Theorem \ref{thm:ptesgpes2}, and Proposition \ref{thm:ptesgpes3}.

\subsubsection*{Acknowledgments} 
I would like to thank D. Cerveau for his helpful and continuous listening. Thanks to the referee that helps me to improve the exposition. Thanks to I. Dolgachev for pointing out me that Coble introduced the group $G_n(\mathbb{C})$ in \cite{Coble}, and to J. Blanc, J. Diller,  F. Han, M. Jonsson, J.-L. Lin for their remarks and comments.

\section{About the set of group generators of $\mathrm{Bir}(\mathbb{P}^n_\mathbb{C})$, $n\geq 3$}\label{sec:pan}

\subsection{Some definitions}

A \textbf{\textit{polynomial automorphism}} $\varphi$ of $\mathbb{C}^n$ is a map $\mathbb{C}^n\to\mathbb{C}^n$ of the type 
\[
(z_0,z_1,\ldots,z_{n-1})\mapsto\big(\varphi_0(z_0,z_1,\ldots,z_{n-1}),\varphi_1(z_0,z_1,\ldots,z_{n-1}),\ldots,\varphi_{n-1}(z_0,z_1,\ldots,z_{n-1})\big),
\]
with $\varphi_i\in\mathbb{C}[z_0,z_1,\ldots,z_{n-1}]$, that is bijective; we denote $\varphi$ by $\varphi=(\varphi_0,\varphi_1,\ldots,\varphi_{n-1})$. A \textbf{\textit{rational self-map}} $\phi\colon\mathbb{P}^n_\mathbb{C}\dashrightarrow\mathbb{P}^n_\mathbb{C}$ is given by
\[
(z_0:z_1:\ldots:z_n)\dashrightarrow\big(\phi_0(z_0,z_1,\ldots,z_n):\phi_1(z_0,z_1,\ldots,z_n):\ldots:\phi_n(z_0,z_1,\ldots,z_n)\big)
\]
where the $\phi_i$ are homogeneous polynomials of the same positive degree, and without common factor of positive degree. Let us denote by $\mathbb{C}[z_0,z_1,\ldots,z_n]_d$ the set of homogeneous polynomials in $z_0$, $z_1$, $\ldots$, $z_n$ of degree $d$. The \textbf{\textit{degree}} of $\phi$ is by definition the degree of the~$\phi_i$. A \textbf{\textit{birational self-map}} of $\mathbb{P}^n_\mathbb{C}$ is a rational self-map that admits a rational inverse. The set of polynomial automorphisms of $\mathbb{C}^n$ (resp. birational self-maps of $\mathbb{P}^n_\mathbb{C}$) form a group denoted $\mathrm{Aut}(\mathbb{C}^n)$ (resp. $\mathrm{Bir}(\mathbb{P}^n_\mathbb{C})$).

\subsection{A result of Pan}\label{subsec:hudsonpan}

Let us recall a construction of Pan (\cite{Pan}) which, given a birational self-map of $\mathbb{P}^n_\mathbb{C}$, allows one to construct a birational self-map of $\mathbb{P}^{n+1}_\mathbb{C}$. Let $P\in\mathbb{C}[z_0,z_1,\ldots,z_n]_d$, $Q\in\mathbb{C}[z_0,z_1,\ldots,z_n]_\ell$, and let $R_0$, $R_1$, $\ldots$, $R_{n-1}\in\mathbb{C}[z_0,z_1,\ldots,z_{n-1}]_{d-\ell}$ be some homogeneous polynomials.
Denote by $\Psi_{P,Q,R}\colon\mathbb{P}^n_\mathbb{C}\dashrightarrow\mathbb{P}^n_\mathbb{C}$ and $\widetilde{\Psi}\colon\mathbb{P}^{n-1}_\mathbb{C}\dashrightarrow\mathbb{P}^{n-1}_\mathbb{C}$ the rational maps defined by 
\[
\Psi_{P,Q,R}=\big(QR_0:QR_1:\ldots:QR_{n-1}:P\big)\qquad \&\qquad \widetilde{\Psi}_R=\big(R_0:R_1:\ldots:R_{n-1}\big).
\]

\begin{lem}[\cite{Pan}]\label{lem:pan0}
{\sl Let $d$, $\ell$ be some integers such that $d\geq \ell+1\geq 2$. Take $Q$ in $\mathbb{C}[z_0,z_1,\ldots,z_n]_\ell$, and $P$ in $\mathbb{C}[z_0,z_1,\ldots,z_n]_d$ without common factors. Let $R_1$, $\ldots$, $R_n$ be some elements of~$\mathbb{C}[z_0,z_1,\ldots,z_{n-1}]_{d-\ell}$. Assume that 
\[
P=z_nP_{d-1}+P_d \qquad\qquad Q=z_nQ_{\ell-1}+Q_\ell
\]
with $P_{d-1}$, $P_d$, $Q_{\ell-1}$, $Q_\ell\in\mathbb{C}[z_0,z_1,\ldots,z_{n-1}]$ of degree $d-1$, resp. $d$, resp. $\ell-1$, resp. $\ell$ and such that $(P_{d-1},Q_{\ell-1})\not=(0,0)$. 

The map $\Psi_{P,Q,R}$ is birational if and only if $\widetilde{\Psi}_R$ is birational.}
\end{lem}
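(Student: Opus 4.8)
The plan is to prove the equivalence by relating the birationality of $\Psi_{P,Q,R}$ on $\mathbb{P}^n_\mathbb{C}$ to that of $\widetilde{\Psi}_R$ on $\mathbb{P}^{n-1}_\mathbb{C}$ through explicit analysis of the map on the chart where the last coordinate is nonzero. I would first pass to affine coordinates by setting $z_n=1$, which is legitimate precisely because the decomposition hypothesis $P=z_nP_{d-1}+P_d$, $Q=z_nQ_{\ell-1}+Q_\ell$ ensures that neither $P$ nor $Q$ vanishes identically on the hyperplane $\{z_n\neq 0\}$, and the condition $(P_{d-1},Q_{\ell-1})\neq(0,0)$ guarantees that the dependence on $z_n$ is genuine. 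In these coordinates the map $\Psi_{P,Q,R}$ reads, up to the normalization by the last coordinate $P$, as sending $(z_0,\ldots,z_{n-1})$ to $\bigl(Q R_0/P,\ldots,Q R_{n-1}/P\bigr)$, which I want to decompose as the composition of the "lower" map $\widetilde{\Psi}_R$ acting on the first $n-1$ affine coordinates together with a one-dimensional twist recording the factor $Q/P$ in the last slot.

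The key structural step is to exhibit $\Psi_{P,Q,R}$ as a composition (or conjugate) of $\widetilde\Psi_R$ with maps that are manifestly birational regardless of the hypotheses, so that invertibility transfers in both directions. Concretely, I would introduce the birational automorphism of $\mathbb{P}^n_\mathbb{C}$ that fixes the first $n-1$ coordinates and acts on the last pair $(z_{n-1}:z_n)$ or on the ratio encoding $P$ and $Q$; because $R_0,\ldots,R_{n-1}$ depend only on $z_0,\ldots,z_{n-1}$, the map $\Psi_{P,Q,R}$ factors as "first apply $\widetilde\Psi_R$ to the horizontal coordinates, then apply an invertible shear in the vertical direction determined by $P$ and $Q$." The forward implication (if $\widetilde\Psi_R$ is birational then so is $\Psi_{P,Q,R}$) then follows by composing the explicit rational inverses; for the reverse implication I would recover $\widetilde\Psi_R$ from $\Psi_{P,Q,R}$ by taking ratios $QR_i/QR_j = R_i/R_j$ of the first $n$ coordinates, which eliminates the common factor $Q$ and reconstructs $\widetilde\Psi_R$ projectively, so that an inverse for $\Psi_{P,Q,R}$ descends to an inverse for $\widetilde\Psi_R$.

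**The main obstacle** I expect is bookkeeping the degeneracy locus and verifying that these candidate rational inverses are genuinely inverse as \emph{rational} maps, not merely generically well-defined on an open set that might turn out to be empty. The hypotheses $d\geq\ell+1\geq 2$, that $P$ and $Q$ have no common factor, and that $(P_{d-1},Q_{\ell-1})\neq(0,0)$ are exactly what I would need to check that the relevant Jacobian or the common-factor cancellation does not collapse the image to a lower-dimensional subvariety, and that the factor $Q/P$ provides an honest extra degree of freedom in the fiber over a point of the image of $\widetilde\Psi_R$. I would therefore spend most of the care confirming that, along the generic fiber, the value of the last coordinate can be solved for uniquely from $P$ and $Q$ given the horizontal data, which is where the no-common-factor condition does the essential work of ensuring the vertical twist is birational. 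Once this one-dimensional solvability is established, the equivalence follows formally by composing the pieces, since $\widetilde\Psi_R$ and the twist are the only non-automorphic constituents and the twist is birational unconditionally.
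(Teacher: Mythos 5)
Note first that the paper does not prove this lemma at all: it is quoted from Pan's article \cite{Pan} with no argument given, so the only possible comparison is with Pan's original proof, whose structure your plan correctly reconstructs. The right skeleton, which you have, is that $\Psi_{P,Q,R}$ commutes with the linear projection $\pi\colon(z_0:\ldots:z_n)\dashrightarrow(z_0:\ldots:z_{n-1})$ from the point $p=(0:\ldots:0:1)$: since the $R_i$ do not involve $z_n$ and $Q$ cancels in the ratios (your observation $QR_i/QR_j=R_i/R_j$), one has $\pi\circ\Psi_{P,Q,R}=\widetilde{\Psi}_R\circ\pi$, and since $P$ and $Q$ have degree at most $1$ in $z_n$, the induced map on a generic fiber of $\pi$ is a M\"obius transformation in $z_n$. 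This Jonqui\`eres-type fibered picture is exactly the mechanism behind the lemma.

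However, two genuine gaps remain. First, the crux of the whole lemma --- that the fiberwise M\"obius action is generically invertible --- is announced (``I would spend most of the care confirming\ldots'') but never carried out, and your closing claim that ``the twist is birational unconditionally'' is false: if $P_{d-1}=Q_{\ell-1}=0$ the fiber map is constant on every line through $p$ and $\Psi_{P,Q,R}$ is never birational, whatever $R$ is. The needed computation is short but is where all the hypotheses enter: the fiber M\"obius map $z_n\mapsto(z_nP_{d-1}+P_d)/(z_nQ_{\ell-1}+Q_\ell)$ (up to a factor depending only on the base) has determinant $P_{d-1}Q_\ell-P_dQ_{\ell-1}$; if this vanished identically one would get $PQ_{\ell-1}=QP_{d-1}$, and coprimality of $P$ and $Q$ would force $Q\mid Q_{\ell-1}$ or $P\mid P_{d-1}$, impossible by degree unless $(P_{d-1},Q_{\ell-1})=(0,0)$, which is excluded. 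Second, your proposed factorization of $\Psi_{P,Q,R}$ as ``$\widetilde{\Psi}_R$ on the horizontal coordinates followed by a vertical shear'' is not well defined as a composition of rational maps unless $\widetilde{\Psi}_R$ is \emph{already} birational, because the shear must be evaluated at the source horizontal data, which cannot be recovered from $\widetilde{\Psi}_R(z')$ otherwise; it is fine for the forward implication but circular for the converse. For the converse one should instead argue directly on the commuting square: if $\Psi_{P,Q,R}$ is birational, the determinant above cannot vanish identically (else every fiber collapses to a point and $\Psi_{P,Q,R}$ is not dominant), so generic fibers map dominantly onto the corresponding target lines; then $\widetilde{\Psi}_R$ is dominant, and it is generically injective since two distinct generic fibers mapping dominantly into the same line would contradict generic injectivity of $\Psi_{P,Q,R}$; in characteristic zero this gives birationality. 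The ratio trick establishes the commuting square but does not by itself ``descend'' the inverse as your text asserts.
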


Let us give the motivation of this construction:

\begin{thm}[\cite{Hudson, Pan}]\label{thm:hudsonpan}
{\sl Any set of group generators of $\mathrm{Bir}(\mathbb{P}^n_\mathbb{C})$, $n\geq 3$, contains uncountably many non-linear maps.}
\end{thm}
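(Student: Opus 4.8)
The plan is to argue by contradiction. Suppose some generating set $\mathcal{F}$ of $\mathrm{Bir}(\mathbb{P}^n_\mathbb{C})$ contains only countably many non-linear maps $f_1,f_2,\ldots$. Since every linear element of $\mathcal{F}$ lies in $\mathrm{PGL}(n+1;\mathbb{C})=\mathrm{Aut}(\mathbb{P}^n_\mathbb{C})$, the subgroup
\[
H=\langle\,\mathrm{PGL}(n+1;\mathbb{C}),\,f_1,f_2,\ldots\,\rangle
\]
contains $\langle\mathcal{F}\rangle$, hence $H=\mathrm{Bir}(\mathbb{P}^n_\mathbb{C})$. I would then compare ``dimensions'': $H$ is assembled from the finite-dimensional group $\mathrm{PGL}(n+1;\mathbb{C})$ together with countably many extra elements, whereas for every $d$ the group $\mathrm{Bir}(\mathbb{P}^n_\mathbb{C})$ contains algebraic families of birational maps whose dimension tends to infinity with $d$.

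First I would record a decomposition of $H$ into bounded pieces. For a finite word $\vec{\jmath}=(j_1,\ldots,j_m)$ the composition map
\[
\mathrm{PGL}(n+1;\mathbb{C})^{m+1}\longrightarrow\mathrm{Bir}(\mathbb{P}^n_\mathbb{C}),\qquad (a_0,\ldots,a_m)\longmapsto a_0f_{j_1}a_1f_{j_2}\cdots f_{j_m}a_m,
\]
is algebraic, so its image $W_{\vec{\jmath}}$ is an irreducible constructible set with $\dim W_{\vec{\jmath}}\le(m+1)\bigl((n+1)^2-1\bigr)$. As $\vec{\jmath}$ runs over the countable set of finite words one has $H=\bigcup_{\vec{\jmath}}W_{\vec{\jmath}}$. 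Next I would produce the large families using Lemma \ref{lem:pan0}. Fixing $\widetilde{\Psi}_R=\mathrm{id}$, that is $R_i=z_i$ and $d=\ell+1$, then for a fixed generic $Q\in\mathbb{C}[z_0,\ldots,z_n]_{d-1}$ and $P\in\mathbb{C}[z_0,\ldots,z_n]_d$ coprime to $Q$ and satisfying the leading-coefficient hypothesis, the map $\Psi_{P,Q,R}=(Qz_0:\ldots:Qz_{n-1}:P)$ is birational of degree exactly $d$. Letting $P$ vary over the dense open set of admissible forms yields an irreducible family $\mathcal{V}_d\subset\mathrm{Bir}(\mathbb{P}^n_\mathbb{C})$ of maps of degree $d$ with $\dim\mathcal{V}_d=\binom{n+d}{n}-1\sim d^n/n!$, the assignment $P\mapsto\Psi_{P,Q,R}$ being injective.

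Because $\mathbb{C}$ is uncountable and $\mathcal{V}_d$ is irreducible, $\mathcal{V}_d$ is not a countable union of proper closed subsets; from $\mathcal{V}_d\subset H=\bigcup_{\vec{\jmath}}W_{\vec{\jmath}}$ one of the closures must swallow it, so $\mathcal{V}_d\subseteq\overline{W_{\vec{\jmath}}}$ for some word, whence $(m+1)\bigl((n+1)^2-1\bigr)\ge\dim\mathcal{V}_d\sim d^n/n!$ and the word length $m=m(d)$ must tend to infinity with $d$. The main obstacle is to convert ``the word must be long'' into an actual contradiction, and it is caused by degree cancellation: a word $a_0f_{j_1}\cdots f_{j_m}a_m$ has generic degree $\prod_i\deg f_{j_i}\ge 2^m$, but for special choices of the $a_i$ the degree can collapse, so the part of $W_{\vec{\jmath}}$ lying in the set $\mathrm{Bir}_{\le d}$ of maps of degree $\le d$ need not be small a priori. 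The crux is therefore the estimate that, on the locus of $\mathrm{PGL}(n+1;\mathbb{C})^{m+1}$ where the degree drops from $\prod_i\deg f_{j_i}$ down to at most $d$, the base-point alignments forcing each cancellation impose enough independent conditions that $\dim\bigl(W_{\vec{\jmath}}\cap\mathrm{Bir}_{\le d}\bigr)$ grows only like a bounded multiple of $\log d$, not like $d^n$.

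Granting this bound, the inclusion $\mathcal{V}_d\subseteq W_{\vec{\jmath}}\cap\mathrm{Bir}_{\le d}$ would force $d^n/n!\lesssim\log d$, which is absurd for $d$ large; hence no such countable family exists, and every generating set must contain uncountably many non-linear maps. I would establish the cancellation estimate by induction on $m$, tracking how post-composition with a fixed factor $f_j$ and then a variable $a\in\mathrm{PGL}(n+1;\mathbb{C})$ changes the homaloidal (base-ideal) data: the linear part $a$ cannot lower the degree unless it drives the image base locus into the indeterminacy locus of the next factor, and each such incidence is a nontrivial closed condition on $a$. Counting these conditions against the $O(m)$ dimensions contributed by the factors $a_i$ yields the required logarithmic bound and completes the contradiction.
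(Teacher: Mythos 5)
Your argument has a fatal gap at exactly the point you flag as ``the crux'': the cancellation estimate $\dim\bigl(W_{\vec{\jmath}}\cap\mathrm{Bir}_{\le d}\bigr)=O(\log d)$ is not just unproven, it is false, and no dimension count of this kind can prove the theorem. Already the subgroup generated by $\mathrm{PGL}(n+1;\mathbb{C})$ and the \emph{single} non-linear map $\sigma_n$ contains all tame automorphisms (the paper shows $\mathrm{Tame}_n\subset G_n(\mathbb{C})$ via Derksen's Theorem \ref{thm:derksen}); in particular, for every $d$ it contains the algebraic family $\bigl(z_0+P(z_1,\ldots,z_{n-1}),z_1,\ldots,z_{n-1}\bigr)$ with $\deg P\le d$, of degree $\le d$ and of dimension $\binom{d+n-1}{n-1}\sim d^{n-1}/(n-1)!$, lying in a single $W_{\vec{\jmath}}$ for a word of length polynomial in $d$. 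So $\dim\bigl(W_{\vec{\jmath}}\cap\mathrm{Bir}_{\le d}\bigr)$ can grow polynomially in $d$ --- in fact of the same order as your $\mathcal{V}_d$: note that Lemma \ref{lem:pan0} requires $P=z_nP_{d-1}+P_d$ and $Q=z_nQ_{\ell-1}+Q_\ell$ with $P_i,Q_i$ independent of $z_n$, so $P$ cannot range over all of $\mathbb{C}[z_0,\ldots,z_n]_d$ and $\dim\mathcal{V}_d\sim 2d^{n-1}/(n-1)!$, not $\binom{n+d}{n}-1$. The clearest symptom that the strategy cannot work is that nothing in it uses $n\ge 3$: for $n=2$ Pan's construction still yields Jonqui\`eres families whose dimension grows linearly in $d$, so your scheme would ``prove'' that $\mathrm{Bir}(\mathbb{P}^2_\mathbb{C})$ is not generated by $\mathrm{PGL}(3;\mathbb{C})$ together with countably many maps --- contradicting Noether's theorem, by which $G_2(\mathbb{C})=\mathrm{Bir}(\mathbb{P}^2_\mathbb{C})$. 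A countably generated (over $\mathrm{PGL}$) subgroup can be just as ``large'' dimensionally as $\mathrm{Bir}(\mathbb{P}^n_\mathbb{C})$ itself, which is anyway a countable union of finite-dimensional constructible sets $\mathrm{Bir}_{\le d}$; size is not the obstruction.

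The paper's proof uses a \emph{qualitative} invariant instead: the birational type of the hypersurfaces contracted by a map. If $\phi=\phi_k\circ\ldots\circ\phi_1$ contracts an irreducible hypersurface $\mathcal{H}$, then some factor $\phi_i$ contracts a hypersurface birationally isomorphic to $\mathcal{H}$ (this is the remark at the end of \S 2); linear maps contract nothing, and any single birational map contracts only finitely many hypersurfaces. Lemma \ref{lem:pan} produces, for each smooth plane cubic $\mathcal{C}_q$, a birational self-map of $\mathbb{P}^n_\mathbb{C}$ contracting the hypersurface $\{q=0\}$, which is birational to $\mathbb{P}^{n-2}_\mathbb{C}\times\mathcal{C}_q$; two such hypersurfaces are birational if and only if the cubics are isomorphic, and the isomorphism classes of smooth cubics form an uncountable family. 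Hence any generating set must contain, for each class, a non-linear map contracting a hypersurface of that birational type --- uncountably many non-linear maps in all. This is also precisely where $n\ge 3$ enters: for $n=2$ the contracted objects are rational curves, which carry no moduli. To repair your write-up you should abandon the dimension count and track contracted divisors (equivalently, the genus in the sense of Frumkin--Lamy recalled in the paper) through compositions.
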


We will give an idea of the proof of this statement.

\begin{lem}[\cite{Pan}]\label{lem:pan}
{\sl Let $n\geq 3$. Let $\mathcal{S}$ be an hypersurface of $\mathbb{P}^n_\mathbb{C}$ of degree $\ell\geq 1$ having a point~$p$ of multiplicity $\geq \ell-1$. 

Then there exists a birational self-map of $\mathbb{P}^n_\mathbb{C}$ of degree $d\geq \ell+1$ that blows down $\mathcal{S}$ onto a point.}
\end{lem}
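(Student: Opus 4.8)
The plan is to exploit Pan's construction from Lemma~\ref{lem:pan0} to build the desired map explicitly, reducing the $n$-dimensional statement to a lower-dimensional one where we have more freedom. Given the hypersurface $\mathcal{S}=\{Q=0\}$ of degree $\ell$ with a point $p$ of multiplicity $\geq\ell-1$, I would first normalize by an automorphism of $\mathbb{P}^n_\mathbb{C}$ so that $p=(0:0:\ldots:0:1)$. Writing $Q=z_nQ_{\ell-1}+Q_\ell$ with $Q_{\ell-1},Q_\ell\in\mathbb{C}[z_0,\ldots,z_{n-1}]$, the multiplicity hypothesis at $p$ exactly says that $Q$ has no term in $z_n^k$ for $k\geq 2$, i.e.\ the expansion in $z_n$ stops at degree one; this is precisely the shape of $Q$ required in Lemma~\ref{lem:pan0}. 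So the hypothesis on $\mathcal{S}$ is tailored to make $Q$ linear in $z_n$.

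Next I would choose the remaining data $P$, $R_0,\ldots,R_{n-1}$ so that $\Psi_{P,Q,R}$ both contracts $\mathcal{S}$ and is birational. The natural choice is to take $R_0,\ldots,R_{n-1}\in\mathbb{C}[z_0,\ldots,z_{n-1}]_{d-\ell}$ defining a birational self-map $\widetilde{\Psi}_R$ of $\mathbb{P}^{n-1}_\mathbb{C}$ (for instance coordinate monomials, or $\sigma_{n-1}$ suitably twisted, which is birational in dimension $n-1$), and to pick $P\in\mathbb{C}[z_0,\ldots,z_n]_d$ of the form $P=z_nP_{d-1}+P_d$ sharing no common factor with $Q$ and with $(P_{d-1},Q_{\ell-1})\neq(0,0)$. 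Then Lemma~\ref{lem:pan0} immediately gives that $\Psi_{P,Q,R}$ is birational precisely because $\widetilde{\Psi}_R$ is. The key observation is contraction: along $\mathcal{S}=\{Q=0\}$ the first $n$ coordinates $QR_0,\ldots,QR_{n-1}$ all vanish, so $\Psi_{P,Q,R}$ maps the open dense part of $\mathcal{S}$ to the single point $(0:\ldots:0:1)$, exactly blowing $\mathcal{S}$ down onto a point.

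The main obstacle, and the point needing genuine care, is verifying that the constructed map has degree \emph{exactly} $d$ (and in particular $d\geq\ell+1\geq 2$, so that it is genuinely nonlinear and $\mathcal{S}$ is really contracted rather than being a component of the indeterminacy in a degenerate way). This requires checking that the homogeneous polynomials $QR_0,\ldots,QR_{n-1},P$ have no common factor of positive degree: since $P$ and $Q$ are assumed coprime, one must arrange that $P$ shares no common factor with the $R_i$ either, which is a genericity condition that can be met by choosing $P_d$ generically. I would also confirm that $d-\ell\geq 1$ forces $d\geq\ell+1$, matching the degree bound in the statement. Once coprimality and the degree count are settled, the birationality comes for free from Pan's lemma, the contraction is immediate from the factor $Q$, and the proof concludes.
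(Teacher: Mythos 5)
Your proposal is correct and follows essentially the same route as the paper: normalize $p$ to $(0:\ldots:0:1)$, observe that multiplicity $\geq \ell-1$ forces the equation of $\mathcal{S}$ to be linear in $z_n$ (the shape required in Lemma~\ref{lem:pan0}), take $R_i=z_i$ and a generic $P=z_nP_{d-1}+P_d$ coprime to $Q$ with $P_{d-1}\neq 0$, so that birationality follows from Lemma~\ref{lem:pan0} and the common factor $Q$ contracts $\mathcal{S}$ onto $(0:\ldots:0:1)$. The only divergence concerns degrees $d>\ell+1$: the paper keeps $R_i=z_i$ and instead sets $Q=h^{d-\ell-1}q'$ for a generic hyperplane $h$ through $p$, whereas your idea of raising the degree of the $R_i$ needs care --- the coordinate power maps $(z_0^{d-\ell}:\ldots:z_{n-1}^{d-\ell})$ you hint at are not birational for $d-\ell\geq 2$ --- though this is immaterial for the statement itself, since the case $d=\ell+1$ already suffices.
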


\begin{proof}
One can assume without loss of generality that $p=(0:0:\ldots:0:1)$. Denote by $q'=0$ the equation of $\mathcal{S}$, and take a generic plane passing through $p$ given by the equation $h=0$. Finally choose $P=z_nP_{d-1}+P_d$ such that
\smallskip
\begin{itemize}
\item[$\bullet$] $P_{d-1}\not=0$;
\smallskip
\item[$\bullet$] $\textrm{pgcd}\,(P,hq')=1$.
\end{itemize}
\smallskip
Now set $Q=h^{d-\ell-1}q'$, $R_i=z_i$, and conclude with Lemma \ref{lem:pan0}.
\end{proof}

\begin{proof}[Proof of Theorem \ref{thm:hudsonpan}]
Let us consider the family of hypersurfaces given by $q(z_1,z_2,z_3)=0$ where $q=0$ defines a smooth curve $\mathcal{C}_q$ of degree $\ell$ on $\{z_0=z_4=z_5=\ldots=z_n=0\}$. Let us note that $q=0$ is birationally equivalent to $\mathbb{P}^{n-2}_\mathbb{C}\times\mathcal{C}_q$. Furthermore $q=0$ and $q'=0$ are birationally equivalent if and only if $\mathcal{C}_q$ and $\mathcal{C}_{q'}$ are isomorphic. Note that for $\ell=2$ the set of isomorphism classes of smooth cubics is a $1$-parameter family, and that according to Lemma \ref{lem:pan} for any $\mathcal{C}_q$ there exists a birational self-map of $\mathbb{P}^n_\mathbb{C}$ that blows down $\mathcal{C}_q$ onto a point. Hence any set of group generators of $\mathrm{Bir}(\mathbb{P}^n_\mathbb{C})$, $n\geq 3$, has to contain uncountably many non-linear maps.
\end{proof}

One can take $d=\ell+1$ in Lemma \ref{lem:pan}. In particular

\begin{cor}
{\sl As soon as $n\geq 3$, there are birational maps of degree $n=\deg\sigma_n$ that do not belong to $G_n(\mathbb{C})$.}
\end{cor}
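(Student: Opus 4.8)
The plan is to produce, through Lemma~\ref{lem:pan}, a birational self-map $f$ of $\mathbb{P}^n_\mathbb{C}$ of degree $n=\deg\sigma_n$ that blows a carefully chosen hypersurface down to a point, and to arrange that hypersurface to be \emph{non-rational}; the whole argument then rests on the principle that no element of $G_n(\mathbb{C})$ can contract a non-rational hypersurface. Concretely, I would apply Lemma~\ref{lem:pan} with $d=\ell+1=n$, that is $\ell=n-1$ (this is the optimal degree allowed by the remark preceding the statement). It produces $f\in\mathrm{Bir}(\mathbb{P}^n_\mathbb{C})$ with $\deg f=n$ blowing down a hypersurface $\mathcal{S}$ of degree $n-1$ onto a point, as soon as $\mathcal{S}$ carries a point of multiplicity $\geq n-2$.

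The key ingredient I would isolate is the following birational-invariance statement: \emph{every irreducible hypersurface contracted by an element of $G_n(\mathbb{C})$ is rational, i.e. birationally equivalent to $\mathbb{P}^{n-1}_\mathbb{C}$.} I would prove it by induction on the number $k$ of occurrences of $\sigma_n$ in a word $g=\alpha_k\sigma_n\alpha_{k-1}\sigma_n\cdots\sigma_n\alpha_0$ representing $g\in G_n(\mathbb{C})$, where $\alpha_i\in\mathrm{Aut}(\mathbb{P}^n_\mathbb{C})$. For $k=0$ there is nothing to contract. For the inductive step set $g'=\alpha_{k-1}\sigma_n\cdots\alpha_0$ and let $\mathcal{S}$ be contracted by $g=(\alpha_k\sigma_n)\circ g'$. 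If $\mathcal{S}$ is already contracted by $g'$, we conclude by the induction hypothesis; otherwise $g'$ restricts to a birational map $\mathcal{S}\dashrightarrow g'(\mathcal{S})$, and since $\alpha_k$ is an isomorphism the hypersurface $g'(\mathcal{S})$ must be contracted by $\sigma_n$. As $\sigma_n$ is an isomorphism off the coordinate hyperplanes and contracts exactly those $n+1$ hyperplanes, $g'(\mathcal{S})$ is a hyperplane, whence $\mathcal{S}\cong g'(\mathcal{S})$ is rational. Granting this principle, it suffices to arrange that the $\mathcal{S}$ above be non-rational in order to conclude that $f\notin G_n(\mathbb{C})$.

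To build such an $\mathcal{S}$ I would reuse the hypersurfaces appearing in the proof of Theorem~\ref{thm:hudsonpan}: take $\mathcal{S}=\{q(z_0,z_1,z_2)=0\}\subset\mathbb{P}^n_\mathbb{C}$, the cone over a smooth plane curve $\mathcal{C}$ of degree $n-1$ contained in $\{z_3=\cdots=z_n=0\}\cong\mathbb{P}^2_\mathbb{C}$. Projecting from the vertex $\{z_0=z_1=z_2=0\}\cong\mathbb{P}^{n-3}_\mathbb{C}$ shows that $\mathcal{S}$ is birational to $\mathbb{P}^{n-2}_\mathbb{C}\times\mathcal{C}$, hence non-rational as soon as $\mathcal{C}$ has positive genus, i.e. as soon as $n-1\geq 3$. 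Moreover $\mathcal{S}$ has multiplicity $n-1\geq n-2$ at each vertex point, so, taking $p=(0:\ldots:0:1)$, the hypotheses of Lemma~\ref{lem:pan} are met and $f$ exists with $\deg f=n$. This settles the statement for every $n\geq 4$.

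The main obstacle I anticipate is precisely the borderline case $n=3$. There Lemma~\ref{lem:pan} forces $\ell=n-1=2$, so the contracted hypersurface $\mathcal{S}$ is a quadric surface, which is always rational; the invariance principle then yields no contradiction, and indeed no non-rational surface of degree $2$ exists. The natural non-rational candidate in $\mathbb{P}^3_\mathbb{C}$ is the cone over an elliptic plane cubic, a (singular) cubic surface birational to $\mathbb{P}^1_\mathbb{C}\times\mathcal{C}$; but that surface has degree $3$, so Lemma~\ref{lem:pan} applied to it only delivers a map of degree $\ell+1=4$, not of degree $\deg\sigma_3=3$. Thus for $n=3$ one cannot reach degree $3$ by contracting a non-rational hypersurface via Lemma~\ref{lem:pan}, and a finer argument is needed, either a direct construction of a degree-$3$ birational map contracting such a cubic cone, or a classification of the hypersurfaces contractible by degree-$3$ elements of $G_3(\mathbb{C})$ together with the multiplicity of their image point. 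I expect this low-dimensional case to be the delicate point; the cases $n\geq 4$ follow cleanly from the rationality invariant above.
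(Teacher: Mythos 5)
Your argument for $n\geq 4$ is essentially the paper's own: the corollary is derived there from Lemma~\ref{lem:pan} with $d=\ell+1$, applied to the cones over smooth plane curves already used in the proof of Theorem~\ref{thm:hudsonpan}, combined with the observation of \S\ref{sec:pan} that for a composition $\phi_k\circ\ldots\circ\phi_1$ every exceptional hypersurface is carried birationally onto a $\phi_i$-exceptional one --- and since a word in $\mathrm{Aut}(\mathbb{P}^n_\mathbb{C})$ and $\sigma_n$ only has the coordinate hyperplanes as exceptional hypersurfaces of its letters, elements of $G_n(\mathbb{C})$ contract only rational hypersurfaces. Your induction is a correct self-contained rewriting of that observation (the paper states it only for $\mathbb{P}^3_\mathbb{C}$, but it holds verbatim in all dimensions), and your construction of the cone $\mathcal{S}$ of degree $n-1$ with vertex multiplicity $n-1\geq n-2$, birational to $\mathbb{P}^{n-2}_\mathbb{C}\times\mathcal{C}$ with $g(\mathcal{C})>0$, settles every $n\geq 4$ exactly as intended.

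The difficulty you flag at $n=3$ is, however, a genuine gap in your proposal as written: since the statement is claimed for all $n\geq 3$, leaving that case open means the corollary is not proved. Your diagnosis is accurate --- with $\ell=2$ the contracted quadric is rational, and Pan's construction cannot help, since a degree-$3$ map $\Psi_{P,Q,R}$ of $\mathbb{P}^3_\mathbb{C}$ with $Q$ divisible by a cubic would force $\deg R_i=0$ and destroy birationality, so within this family only rational surfaces (quadrics, planes, cones over rational plane curves) get contracted in degree $3$. The missing idea is supplied by the very next subsection of the paper: $G_3(\mathbb{C})$ is contained in the subgroup of birational self-maps of $\mathbb{P}^3_\mathbb{C}$ of genus $0$, and there do exist degree-$3$ maps of positive genus, namely the classical cubo-cubic transformation defined by the cubics through a smooth curve of degree $6$ and genus $3$, which contracts the trisecant surface of that curve, a surface birationally ruled over a curve of genus $3$ (this is the example underlying \cite{Frumkin, Lamy}). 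Note that the genus-$0$ property is precisely your ``only rational hypersurfaces are contracted'' principle, since a divisor contracted by a birational map between smooth threefolds is birational to $\mathbb{P}^1_\mathbb{C}\times\mathcal{C}$; so the ``direct construction of a degree-$3$ map contracting a non-rational surface'' you anticipate is exactly how the case $n=3$ is closed --- it simply cannot be reached through Lemma~\ref{lem:pan}, and your proposal should cite or reproduce such a construction to be complete.
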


\begin{rem}
The maps $\Psi_{P,Q,R}$ that are birational form a subgroup of $\mathrm{Bir}(\mathbb{P}^n_\mathbb{C})$ denoted by~$\mathrm{J}_0(1;\mathbb{P}^n_\mathbb{C})$, and stu\-died in \cite{PanSimis} : in particular $\mathrm{J}_0(1;\mathbb{P}^3_\mathbb{C})$ inherits the property of Theorem~\ref{thm:hudsonpan}.
\end{rem}

\subsection{A first remark}

Let $\phi$ be a birational map of $\mathbb{P}^3_\mathbb{C}$. A \textbf{\textit{regular resolution}} of $\phi$ is a morphism $\pi\colon Z\to \mathbb{P}^3_\mathbb{C}$ which is a sequence of blow-ups 
\[
\pi=\pi_1\circ\ldots\circ\pi_r
\]
along smooth irreducible centers, such that 
\begin{itemize}
\item $\phi\circ\pi\colon Z\to\mathbb{P}^3_\mathbb{C}$ is a birational morphism,
\item and each center $B_i$ of the blow-up $\pi_i\colon Z_i\to Z_{i-1}$ is contained in the base locus of the induced map $Z_{i-1}\dashrightarrow\mathbb{P}^3_\mathbb{C}$. 
\end{itemize}
It follows from Hironaka that such a resolution always exists. If $B$ is a smooth irreducible center of a blow-up in a smooth projective complex variety of dimension $3$, then $B$ is either a point, or a smooth curve. We define the genus of $B$ as follows: it is $0$ if $B$ is a point, the usual genus otherwise. Frumkin defines the \textbf{\textit{genus}} of $\phi$ to be the maximum of the genera of the centers of the blow-ups in the resolution of $\phi$ (\emph{see} \cite{Frumkin}), and shows that this definition does not depend on the choice of the regular resolution. In \cite{Lamy} an other definition of the genus of a birational map is given. Let us recall that if $E$ is an irreducible divisor contracted by a birational map between smooth projective complex varieties of dimension $3$, then $E$ is birational to $\mathbb{P}^1_\mathbb{C}\times\mathcal{C}$, where $\mathcal{C}$ denotes a smooth curve (\cite{Lamy}). The genus of a birational map $\phi$ of $\mathbb{P}^3_\mathbb{C}$ is the maximum of the genera of the irreducible divisors in $\mathbb{P}^3_\mathbb{C}$ contracted by $\phi$. Lamy proves that these two definitions of genus agree (\cite{Lamy}).

Let $\phi$ be in $\mathrm{Bir}(\mathbb{P}^3_\mathbb{C})$, and let $\mathcal{H}$ be an irreducible hypersurface of $\mathbb{P}^3_\mathbb{C}$. We say that $\mathcal{H}$ is \textbf{\textit{$\phi$-exceptional}} if $\phi$ is not injective on any open subset of $\mathcal{H}$ (or equivalently if there is an open subset of $\mathcal{H}$ which is mapped into a subset of codimension $\geq 2$ by $\phi$). Let $\phi_1$, $\ldots$, $\phi_k$ be in $\mathrm{Bir}(\mathbb{P}^3_\mathbb{C})$, and let $\phi=\phi_k\circ\ldots\circ\phi_1$. Let $\mathcal{H}$ be an irreducible hypersurface of $\mathbb{P}^3_\mathbb{C}$. If $\mathcal{H}$ is $\phi$-exceptional, then there exists $1\leq i\leq k$ and a $\phi_i$-exceptional hypersurface $\mathcal{H}_i$ such that
\begin{itemize}
\item $\phi_{i-1}\circ\ldots\circ\phi_1$ realizes a birational isomorphism from $\mathcal{H}$ to $\mathcal{H}_i$;

\item $\phi_i$ contracts $\mathcal{H}_i$.
\end{itemize}

In particular one has the following statement.

\begin{pro}
{\sl The group $G_3(\mathbb{C})$ is contained in the subgroup of birational self-maps of $\mathbb{P}^3_\mathbb{C}$ of genus $0$.}
\end{pro}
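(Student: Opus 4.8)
The plan is to show that both kinds of generators of $G_3(\mathbb{C})$ have genus $0$, and then to argue that genus $0$ is preserved under composition, so that every element of $G_3(\mathbb{C})$, being a word in the generators, has genus $0$. This reduces the proposition to two manageable facts about the generators together with a subadditivity-type property for the genus.

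\medskip

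First I would verify that each generator has genus $0$, using the divisorial definition of the genus (the maximum of the genera of the irreducible hypersurfaces of $\mathbb{P}^3_\mathbb{C}$ contracted by the map). An automorphism of $\mathbb{P}^3_\mathbb{C}$ contracts no hypersurface at all, so its genus is $0$ vacuously. For the standard involution $\sigma_3$, one computes directly that the contracted hypersurfaces are exactly the coordinate hyperplanes $\{z_i=0\}$; each such hyperplane is a copy of $\mathbb{P}^2_\mathbb{C}$, hence rational, and $\sigma_3$ sends it to a coordinate point, so the relevant curve $\mathcal{C}$ in the splitting $E\sim\mathbb{P}^1_\mathbb{C}\times\mathcal{C}$ is rational of genus $0$. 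Thus $\sigma_3$ has genus $0$.

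\medskip

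Next I would use the exceptional-hypersurface analysis recorded just before the statement. Writing $\phi=\phi_k\circ\ldots\circ\phi_1$ with each $\phi_i$ a generator, suppose $\mathcal{H}$ is an irreducible $\phi$-exceptional hypersurface. By the displayed property there is an index $i$ and a $\phi_i$-exceptional hypersurface $\mathcal{H}_i$ with $\phi_{i-1}\circ\ldots\circ\phi_1$ inducing a birational isomorphism $\mathcal{H}\dashrightarrow\mathcal{H}_i$ and $\phi_i$ contracting $\mathcal{H}_i$. A birational isomorphism between the (threefold-contracted) divisors preserves the isomorphism class of the base curve $\mathcal{C}$, hence preserves the genus; concretely, if $\mathcal{H}_i\sim\mathbb{P}^1_\mathbb{C}\times\mathcal{C}_i$ then $\mathcal{H}\sim\mathbb{P}^1_\mathbb{C}\times\mathcal{C}_i$ as well, because the curve factor is a birational invariant of a ruled surface of this type. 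Since $\phi_i$ is a generator, $\mathcal{H}_i$ has genus $0$, and therefore $\mathcal{H}$ has genus $0$. As $\mathcal{H}$ was an arbitrary contracted hypersurface, $\phi$ has genus $0$.

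\medskip

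The main obstacle I anticipate is the middle step: justifying that the genus of a contracted divisor is a birational invariant transported correctly along the maps $\phi_{i-1}\circ\ldots\circ\phi_1$. One must be careful that this composition genuinely restricts to a birational map of $\mathcal{H}$ onto $\mathcal{H}_i$ (which is exactly what the quoted exceptional-hypersurface statement guarantees) and that the base-curve invariant $\mathcal{C}$ of the $\mathbb{P}^1_\mathbb{C}$-ruled structure is insensitive to birational modification — this rests on Lamy's description that any such contracted divisor is birational to $\mathbb{P}^1_\mathbb{C}\times\mathcal{C}$ and that $\mathcal{C}$ is determined up to isomorphism. Once this invariance is in hand, the genus-$0$ property propagates through any finite word in the generators, and since $G_3(\mathbb{C})$ is generated by $\sigma_3$ together with $\mathrm{Aut}(\mathbb{P}^3_\mathbb{C})$, every element of $G_3(\mathbb{C})$ lies in the subgroup of birational self-maps of genus $0$.
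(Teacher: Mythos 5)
Your proposal is correct and follows exactly the argument the paper intends: the paper states the exceptional-hypersurface transport property immediately before the proposition precisely so that genus $0$ of the generators (automorphisms contract nothing; $\sigma_3$ contracts only the rational coordinate hyperplanes) propagates to any word in them. Your extra care about the birational invariance of the curve factor $\mathcal{C}$ is well placed and is settled by Lamy's result, cited in the paper, that a contracted divisor is birational to $\mathbb{P}^1_\mathbb{C}\times\mathcal{C}$ with $\mathcal{C}$ determined up to isomorphism.
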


\section{Non-linearity of $G_n(\mathbb{C})$}\label{sec:nonlin}

If $V$ is a finite dimensional vector space over~$\mathbb{C}$ there is no faithful linear representation $\mathrm{Bir}(\mathbb{P}^n_\mathbb{C})\to\mathrm{GL}(V)$ (\emph{see} \cite[Proposition 5.1]{CerveauDeserti}). The proof of this statement is based on the following Lemma due to Birkhoff (\cite[Lemma 1]{Birkhoff}): if $\mathfrak{a}$, $\mathfrak{b}$ and $\mathfrak{c}$ are three elements of $\mathrm{GL}(n;\mathbb{C})$ such that 
\[
[\mathfrak{a},\mathfrak{b}]=\mathfrak{c},\quad [\mathfrak{a},\mathfrak{c}]=[\mathfrak{b},\mathfrak{c}]=\mathrm{id},\quad \mathfrak{c}^p=\mathrm{id} \text{ for some $p$ prime}
\]
then $p\leq n$. Assume that there exists an injective homomorphism $\rho$ from $\mathrm{Bir}(\mathbb{P}^2_\mathbb{C})$ to $\mathrm{GL}(n;\mathbb{C})$. For any $p>n$ prime consider in the affine chart $z_2=1$ the maps 
\[
(\exp(2\mathbf{i}\pi/p)z_0,z_1),\qquad (z_0,z_0z_1),\qquad(z_0,\exp(-2\mathbf{i}\pi/p)z_1).
\]
The image by $\rho$ of these maps satisfy Birkhoff Lemma so $p\leq n$: contradiction. In any dimension we have the same property: $G_n(\mathbb{C})$ is not linear, \emph{i.e.}  if $V$ is a finite dimensional vector space over $\mathbb{C}$ there is no faithful linear representation $G_n(\mathbb{C})\to\mathrm{GL}(V)$. Actually $G_n(\mathbb{C})$ satisfies a more precise property due to Cornulier in dimension $2$ (\emph{see}~\cite{Cornulier}):

\begin{pro}\label{pro:nonlin2}
{\sl The group $G_n(\mathbb{C})$ has no non-trivial finite dimensional representation.}
\end{pro}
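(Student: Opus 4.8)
The plan is to show that every finite-dimensional representation $\rho\colon G_n(\mathbb{C})\to\mathrm{GL}(V)$ is trivial, and the strategy has two independent engines: Birkhoff's Lemma (which pushes a large torsion automorphism into $\ker\rho$) followed by the simplicity of $\mathrm{Aut}(\mathbb{P}^n_\mathbb{C})$ and the perfectness of $G_n(\mathbb{C})$ (Theorem \ref{thm:ptesgpes1}), which upgrade this to triviality.

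Set $N=\dim V$. First I would use, for every prime $p$, the Heisenberg configuration already underlying the non-linearity statement preceding this Proposition: elements $a_p,b_p,c_p\in G_n(\mathbb{C})$ with $[a_p,b_p]=c_p$, $[a_p,c_p]=[b_p,c_p]=\mathrm{id}$, and $c_p^p=\mathrm{id}$, where (in the affine chart) one places the two-dimensional pattern $a_p=(z_0,z_0z_1,z_2,\ldots,z_{n-1})$ and $b_p=(\zeta_p z_0,z_1,z_2,\ldots,z_{n-1})$ with $\zeta_p$ a primitive $p$-th root of unity, so that $c_p=[b_p,a_p]$ is the order-$p$ diagonal automorphism scaling $z_1$ by a primitive $p$-th root of unity and fixing the other coordinates. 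Applying $\rho$ and then Birkhoff's Lemma (\cite{Birkhoff}) to $\rho(a_p),\rho(b_p),\rho(c_p)\in\mathrm{GL}(N;\mathbb{C})$: since $\rho(c_p)^p=\mathrm{id}$ with $p$ prime, either $\rho(c_p)=\mathrm{id}$ or $\rho(c_p)$ has order exactly $p$, in which case $p\le N$. Hence, choosing any prime $p>N$, I get $\rho(c_p)=\mathrm{id}$.

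Next I would exploit normality. As $\ker\rho$ is normal in $G_n(\mathbb{C})$, the intersection $\ker\rho\cap\mathrm{Aut}(\mathbb{P}^n_\mathbb{C})$ is a normal subgroup of $\mathrm{Aut}(\mathbb{P}^n_\mathbb{C})=\mathrm{PGL}(n+1;\mathbb{C})$. For $p>N$ it contains $c_p$, which is non-scalar (its eigenvalues $1$ and $\zeta_p$ are distinct) and hence a nontrivial element of $\mathrm{PGL}(n+1;\mathbb{C})$. Since $\mathbb{C}$ is algebraically closed, $\mathrm{PGL}(n+1;\mathbb{C})=\mathrm{PSL}(n+1;\mathbb{C})$ is simple as an abstract group, so $\ker\rho\cap\mathrm{Aut}(\mathbb{P}^n_\mathbb{C})=\mathrm{Aut}(\mathbb{P}^n_\mathbb{C})$; that is, $\mathrm{Aut}(\mathbb{P}^n_\mathbb{C})\subseteq\ker\rho$. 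Since $G_n(\mathbb{C})=\langle\sigma_n,\mathrm{Aut}(\mathbb{P}^n_\mathbb{C})\rangle$, the image $\rho(G_n(\mathbb{C}))$ is then generated by the single element $\rho(\sigma_n)$, hence abelian; and a homomorphism from a perfect group to an abelian group is trivial, so $\rho(G_n(\mathbb{C}))=\{\mathrm{id}\}$ by Theorem \ref{thm:ptesgpes1}.

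I expect the genuine difficulty to sit entirely in the first step: guaranteeing that triples $(a_p,b_p,c_p)$ with central commutator of order \emph{exactly} $p$ lie in $G_n(\mathbb{C})$ for arbitrarily large $p$. In $\mathrm{Bir}(\mathbb{P}^2_\mathbb{C})=G_2(\mathbb{C})$ this is automatic, but for $n\ge 3$ one has $G_n(\mathbb{C})\subsetneq\mathrm{Bir}(\mathbb{P}^n_\mathbb{C})$, and the monomial map $(z_0,z_0z_1,\ldots)$ is not a tame automorphism, so its membership in $G_n(\mathbb{C})$ is not delivered directly by Proposition \ref{pro:subgroup}; this is precisely the point where the specific generation of $G_n(\mathbb{C})$ by $\sigma_n$ and $\mathrm{Aut}(\mathbb{P}^n_\mathbb{C})$ must be used, and it is the same input that already secures the non-linearity of $G_n(\mathbb{C})$ stated just above. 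Once such a family is available, everything downstream (Birkhoff's Lemma, simplicity of $\mathrm{PGL}(n+1;\mathbb{C})$, and perfectness) is formal; the sharpening of non-linearity to the absence of \emph{any} nontrivial representation is, as in \cite{Cornulier}, exactly the passage from ``$\rho(c_p)$ has order $p$'' to ``$\rho(c_p)=\mathrm{id}$'' combined with the simplicity and perfectness arguments above.
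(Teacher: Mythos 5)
Your overall architecture --- a Heisenberg triple inside $G_n(\mathbb{C})$ fed to Birkhoff's Lemma, then simplicity of $\mathrm{PGL}(n+1;\mathbb{C})$ to push $\mathrm{Aut}(\mathbb{P}^n_\mathbb{C})$ into $\ker\rho$, then a final step killing $\sigma_n$ --- is exactly the paper's strategy (adapted, as there, from Cornulier). But there is one genuine gap, and you have located it yourself without closing it: nothing in your argument shows that $a_p=(z_0,z_0z_1,z_2,\ldots,z_{n-1})$ belongs to $G_n(\mathbb{C})$. For $n\geq 3$ this is the entire mathematical content of the proof: $a_p$ is not even a polynomial automorphism (its inverse is only rational), so Proposition \ref{pro:subgroup} and tameness are irrelevant, and since $G_n(\mathbb{C})\subsetneq\mathrm{Bir}(\mathbb{P}^n_\mathbb{C})$ for $n\geq 3$, membership of a given quadratic map is genuinely in doubt. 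Saying ``this is precisely the point where the specific generation by $\sigma_n$ and $\mathrm{Aut}(\mathbb{P}^n_\mathbb{C})$ must be used'' names the obstacle but does not remove it, so as written your first step is unsupported. The paper fills exactly this hole with Lemma \ref{lem:pratique}: it exhibits the monomial map $\varsigma=\big(z_0z_{n-1}:z_1z_{n-1}:\ldots:z_{n-2}z_{n-1}:z_{n-1}z_n:z_n^2\big)$ explicitly as $\mathfrak{a}_1\sigma_n\mathfrak{a}_2\sigma_n\mathfrak{a}_3$ with $\mathfrak{a}_i\in\mathrm{Aut}(\mathbb{P}^n_\mathbb{C})$, and then runs your commutator trick with $\varsigma$ in place of $a_p$, taking $\mathfrak{g}_p=(\xi_pz_0,\ldots,\xi_pz_{n-1})$ in the chart $z_n=1$ and $\mathfrak{h}_p=[\varsigma,\mathfrak{g}_p]=(\xi_pz_0,\ldots,\xi_pz_{n-2},z_{n-1})$, which is central in $\langle\varsigma,\mathfrak{g}_p\rangle$ and of order $p$. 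One warning if you try to repair your version by substituting $\varsigma$ for $a_p$: your $b_p=(\zeta_pz_0,z_1,\ldots,z_{n-1})$ \emph{commutes} with $\varsigma$ (the commutator you need is trivial), so you must also replace $b_p$ by the scalar $\mathfrak{g}_p$, as the paper does; alternatively you would have to produce for your $a_p$ an explicit factorization through $\sigma_n$ and automorphisms analogous to Lemma \ref{lem:pratique}, which you have not done.

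Downstream of that point your argument is correct, and your ending deviates from the paper's in an acceptable way. The paper concludes from triviality of $\rho$ on $\mathrm{PGL}(n+1;\mathbb{C})$ by using the explicit conjugacy $\sigma_n=\psi(-\mathrm{id})\psi^{-1}$ with $\psi=\mathfrak{a}_1\sigma_n\mathfrak{a}_2\in G_n(\mathbb{C})$, so that $\rho(\sigma_n)=\mathrm{id}$ directly; you instead note the image is generated by $\rho(\sigma_n)$, hence abelian, hence trivial by perfectness (Theorem \ref{thm:ptesgpes1}). That is not circular --- the paper's proof of perfectness (Proposition \ref{pro:ssgpenormeng} and Corollary \ref{cor:prodconj}) uses only the conjugacy of $-\mathrm{id}$ and $\sigma_n$, not Proposition \ref{pro:nonlin2} itself --- but it imports a heavier result from \S\ref{sec:algebraic} where the one-line conjugacy argument suffices. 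Your verification that $c_p$ is non-scalar (so nontrivial in $\mathrm{PGL}(n+1;\mathbb{C})$), needed before invoking simplicity, is correct and is indeed a detail the argument requires. In summary: same method as the paper, valid formal skeleton, but the proposal is missing the one concrete lemma (the explicit $\sigma_n$-factorization of the monomial map) on which everything rests.
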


\begin{lem}\label{lem:pratique}
{\sl The map $\varsigma=\big(z_0z_{n-1}:z_1z_{n-1}:\ldots:z_{n-2}z_{n-1}: z_{n-1}z_n:z_n^2\big)$ belongs to $G_n(\mathbb{C})$.}
\end{lem}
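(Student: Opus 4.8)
The plan is to dehomogenise, recognise $\varsigma$ as an elementary quadratic de Jonqui\`eres involution, reduce to a partial inversion, and finally exhibit the latter as an explicit word in $\sigma_n$ and $\mathrm{Aut}(\mathbb{P}^n_\mathbb{C})=\mathrm{PGL}(n+1;\mathbb{C})$.

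First I would work in the affine chart $z_n=1$ and set $x_i=z_i/z_n$. A direct computation shows that there
\[
\varsigma\colon(x_0,\ldots,x_{n-1})\dashrightarrow(x_0x_{n-1},\ldots,x_{n-2}x_{n-1},x_{n-1}),
\]
so that $\varsigma$ preserves the fibration $\{x_{n-1}=\mathrm{const}\}$ and acts on each fibre by the homothety of ratio $x_{n-1}$, while in the same chart $\sigma_n$ is the total inversion $(x_0,\ldots,x_{n-1})\dashrightarrow(x_0^{-1},\ldots,x_{n-1}^{-1})$. The homogeneous components of $\varsigma$ span the net $\langle z_n^2\rangle\oplus z_{n-1}\!\cdot\!\langle z_0,\ldots,z_{n-2},z_n\rangle$; this net is $\mathrm{PGL}(n+1;\mathbb{C})$-equivalent to $\langle z_n^2\rangle\oplus z_0\!\cdot\!\langle z_1,\ldots,z_n\rangle$, the net of the partial inversion
\[
\iota_0\colon(x_0,x_1,\ldots,x_{n-1})\dashrightarrow(x_0^{-1},x_1,\ldots,x_{n-1}),\qquad \iota_0=(z_n^2:z_0z_1:\cdots:z_0z_{n-1}:z_0z_n).
\]
Hence $\varsigma=\mathfrak{a}\,\iota_0\,\mathfrak{b}$ for suitable $\mathfrak{a},\mathfrak{b}\in\mathrm{PGL}(n+1;\mathbb{C})$, and since $\mathrm{PGL}(n+1;\mathbb{C})\subset G_n(\mathbb{C})$ it is enough to prove that $\iota_0\in G_n(\mathbb{C})$.

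Next I would realise $\iota_0$ as an explicit composition of $\sigma_n$ with projectivities. Morally $\iota_0$ is ``$\sigma_n$ carried out on the single variable $x_0$'', so the entire task is to undo the $n-1$ superfluous inversions that $\sigma_n$ simultaneously performs on $x_1,\ldots,x_{n-1}$. I would do this by conjugating and multiplying $\sigma_n$ by carefully chosen, genuinely non-diagonal linear maps, arranged so that the coincidences they force among the homogeneous components produce a common factor; after clearing it, the degree drops back to $2$ and the map is exactly $\iota_0$. The verification is then a finite bookkeeping computation on the associated linear systems.

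The crux, and the place where the difficulty concentrates, is this degree collapse, and two remarks show why no na\"ive factorisation succeeds. A single $\sigma_n$ is impossible: the base locus of $\varsigma$ consists of the section $\{z_{n-1}=z_n=0\}\cong\mathbb{P}^{n-2}_\mathbb{C}$ together with the isolated point $(0:\cdots:0:1:0)$, so it has two components, whereas $\mathrm{Bs}(\sigma_n)$ is the union of the $\binom{n+1}{2}$ codimension-two coordinate subspaces; as these cannot be exchanged by a projectivity, $\varsigma$ is not of the form $\mathfrak{a}\,\sigma_n\,\mathfrak{b}$. Moreover one cannot stay monomial: on the character lattice $\sigma_n$ is $-\mathrm{id}$ and the monomial elements of $\mathrm{PGL}(n+1;\mathbb{C})$ give only the image of $\mathfrak{S}_{n+1}$, so together they generate the finite group $\{\pm\mathrm{id}\}\times\mathfrak{S}_{n+1}$, which cannot contain $\varsigma$, whose lattice class is the infinite-order unipotent matrix $\mathrm{id}+\sum_{i=0}^{n-2}E_{i,n-1}$. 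Consequently non-monomial projectivities and several copies of $\sigma_n$ must enter, and the genuine work is to pin down the explicit word and check that the cancellation occurs.
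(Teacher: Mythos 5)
Your preparatory reductions are sound: in the chart $z_n=1$ the map is indeed $(x_0x_{n-1},\ldots,x_{n-2}x_{n-1},x_{n-1})$; the two quadratic linear systems you write down are projectively equivalent (the coordinate permutation $z_{n-1}\mapsto z_0$, $z_i\mapsto z_{i+1}$ for $0\leq i\leq n-2$, $z_n\mapsto z_n$ identifies them), so $\varsigma=\mathfrak{a}\,\iota_0\,\mathfrak{b}$ with $\mathfrak{a},\mathfrak{b}\in\mathrm{PGL}(n+1;\mathbb{C})$ is legitimate; and your two obstruction remarks are correct (the base locus of $\varsigma$ has two components while that of any $\mathfrak{a}\,\sigma_n\,\mathfrak{b}$ has $\binom{n+1}{2}\geq 3$; the group generated by $\sigma_n$ and monomial projectivities has finite image in the automorphisms of the character lattice, whereas the lattice class of $\varsigma$ has infinite order). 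But those remarks only give \emph{lower bounds} --- at least two occurrences of $\sigma_n$ and some non-monomial projectivity are necessary --- and the lemma is an \emph{existence} statement. At exactly the point where the proof must happen (``conjugating and multiplying $\sigma_n$ by carefully chosen, genuinely non-diagonal linear maps \ldots\ a finite bookkeeping computation'') you exhibit no word and carry out no computation; you even say yourself that ``the genuine work is to pin down the explicit word.'' So the proposal has a genuine gap: it never establishes $\iota_0\in G_n(\mathbb{C})$, equivalently $\varsigma\in G_n(\mathbb{C})$.

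The gap is not a formality, because membership in $G_n(\mathbb{C})$ cannot be settled by soft invariants: as the Corollary in \S\ref{sec:pan} shows, for $n\geq 3$ there are birational maps of degree $n=\deg\sigma_n$ that do \emph{not} belong to $G_n(\mathbb{C})$, so a map plausibly ``made of one inversion'' may still fail to factor, and an explicit factorization (or some genuinely structural argument) is unavoidable. The paper supplies precisely this: it displays $\varsigma=\mathfrak{a}_1\sigma_n\mathfrak{a}_2\sigma_n\mathfrak{a}_3$ with explicit $\mathfrak{a}_1,\mathfrak{a}_2,\mathfrak{a}_3\in\mathrm{PGL}(n+1;\mathbb{C})$ --- a word with two copies of $\sigma_n$ and non-monomial linear maps, consistent with your lower bounds --- and the proof is then the degree-collapse verification you describe but do not perform. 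To repair your write-up you must produce such a word for $\iota_0$ (or for $\varsigma$ directly) and check the cancellation of the common factor; everything before that point in your argument can be kept, though the reduction to $\iota_0$ is then no longer needed.
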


\begin{proof}
We have $\varsigma=\mathfrak{a}_1\sigma_n\mathfrak{a}_2\sigma_n\mathfrak{a}_3$ where
\[
\mathfrak{a}_1=\big(z_2-z_1:z_3-z_1:\ldots:z_n-z_1:z_1:z_1-z_0\big),
\]
\[
\mathfrak{a}_2=\big(z_{n-1}+z_n:z_n:z_0:z_1:\ldots:z_{n-2}\big),
\]
\[
\mathfrak{a}_3=\big(z_0+z_n:z_1+z_n:\ldots:z_{n-2}+z_n:z_{n-1}-z_n:z_n\big).
\]
\end{proof}

\begin{proof}[Proof of Proposition \ref{pro:nonlin2}]
We adapt the proof of \cite{Cornulier}.

Let us now work in the affine chart $z_n=1$. By Lemma \ref{lem:pratique} in $G_n(\mathbb{C})$ there is a natural copy of $H=(\mathbb{C}^*)^n\rtimes\mathbb{Z}$; indeed $\langle\varsigma=\big(z_0z_{n-1},z_1z_{n-1},\ldots,z_{n-2}z_{n-1}, z_{n-1}\big)\rangle\simeq\mathbb{Z}$ acts on $\big\{(\alpha_0z_0,\alpha_1z_1,\ldots,\alpha_{n-1}z_{n-1})\,\vert\,\alpha_i\in\mathbb{C}^*\big\}\simeq(\mathbb{C}^*)^n$ and $H$ is the group of maps 
\[
\big\{(\alpha_0z_0z_{n-1}^k,\alpha_1z_1z_{n-1}^k,\ldots,\alpha_{n-2}z_{n-2}z_{n-1}^k,\alpha_{n-1}z_{n-1})\,\vert\,\alpha_i\in\mathbb{C}^*,\,k\in\mathbb{Z}\big\}.
\]

Consider any linear representation $\rho\colon H\to\mathrm{GL}(k;\mathbb{C})$. If $p$ is prime, and if $\xi_p$ is a primitive $p$-root of unity, set  
\[
\mathfrak{g}_p=(\xi_pz_0,\xi_pz_1,\ldots,\xi_pz_{n-1}), \quad \mathfrak{h}_p=(\xi_p z_0,\xi_p z_1,\ldots,\xi_p z_{n-2},z_{n-1}).
\]
Then $\mathfrak{h}_p=[\varsigma,\mathfrak{g}_p]$ commutes with both $\phi$ and $\mathfrak{g}_p$. By \cite[Lemma 1]{Birkhoff} if $\rho(\mathfrak{g}_p)\not=1$, then $k\geq p$.

Picking $p$ to be greater than $k$, this shows that if we have an arbitrary representation $f\colon G_n(\mathbb{C})\to\mathrm{GL}(k;\mathbb{C})$, the restriction $f_{\vert\mathrm{PGL}(n+1;\mathbb{C})}$ is not faithful. Since $\mathrm{PGL}(n+1;\mathbb{C})$ is simple, this implies that $f$ is trivial on $\mathrm{PGL}(n+1;\mathbb{C})$. We conclude by using the fact that the two involutions $-\mathrm{id}$ and~$\sigma_n$ are conjugate via the map $\psi$ given by
\begin{small}
\[
\left(\frac{z_0+1}{z_0-1},\frac{z_1+1}{z_1-1},\ldots,\frac{z_{n-1}+1}{z_{n-1}-1}\right)
\]
\end{small}
and $\psi=\mathfrak{a}_1\sigma_n\mathfrak{a}_2$ where $\mathfrak{a}_1$ and $\mathfrak{a}_2$ denote the two following automorphisms of $\mathbb{P}^n_\mathbb{C}$ 
\[
\mathfrak{a}_1=\big(z_0+1,z_1+1,\ldots,z_{n-1}+1\big),
\]
\[
\mathfrak{a}_2=\Big(\frac{z_0-1}{2},\frac{z_1-1}{2},\ldots,\frac{z_{n-1}-1}{2}\Big).
\]
\end{proof}

\begin{rem}
Proposition \ref{pro:nonlin2} is also true for $G_n(\Bbbk)$ where $\Bbbk$ is an algebraically closed field.
\end{rem}

\section{Subgroups of $G_n(\mathbb{C})$}\label{sec:subgroup}

\subsection{The tame automorphisms}

The automorphisms of $\mathbb{C}^n$ written in the form $(\phi_0,\phi_1,\ldots,\phi_{n-1})$ where 
\[
\phi_i=\phi_i(z_i,z_{i+1},\ldots,z_{n-1})
\]
depends only on $z_i$, $z_{i+1}$, $\ldots$, $z_{n-1}$ form the \textbf{\textit{Jonqui\`eres subgroup}} $\mathrm{J}_n\subset\mathrm{Aut}(\mathbb{C}^n)$. A polynomial automorphism  $(\phi_0,\phi_1,\ldots,\phi_{n-1})$ where all the~$\phi_i$ are linear is \textbf{\textit{an affine transformation}}. Denote by $\mathrm{Aff}_n$ the \textbf{\textit{group of affine transformations}}; $\mathrm{Aff}_n$ is the semi-direct product of $\mathrm{GL}(n;\mathbb{C})$ with the commutative unipotent subgroup of translations. We have the following inclusions
\[
\mathrm{GL}(n;\mathbb{C})\subset\mathrm{Aff}_n\subset\mathrm{Aut}(\mathbb{C}^n).
\] 
The subgroup $\mathrm{Tame}_n\subset\mathrm{Aut}(\mathbb{C}^n)$ generated by $\mathrm{J}_n$ and $\mathrm{Aff}_n$ is called the \textbf{\textit{group of tame automorphisms}}. For $n=2$ one has $\mathrm{Tame}_2=\mathrm{Aut}(\mathbb{C}^2)$, this follows from the fact that $\mathrm{Aut}(\mathbb{C}^2)=\mathrm{J}_2\ast_{\mathrm{J}_2\cap\mathrm{Aff}_2}\mathrm{Aff}_2$ (\emph{see} \cite{Jung}). The group $\mathrm{Tame}_3$ does not coincide with $\mathrm{Aut}(\mathbb{C}^3)$: the Nagata automorphism is not tame (\cite{ShestakovUmirbaev}). Derksen gives a set of generators of $\mathrm{Tame}_n$ (\emph{see} \cite{VandenEssen} for a proof):

\begin{thm}\label{thm:derksen}
{\sl Let $n\geq 3$ be a natural integer. The group $\mathrm{Tame}_n$ is generated by $\mathrm{Aff}_n$, and the Jonqui\`eres map $\big(z_0+z_1^2,z_1,z_2,\ldots,z_{n-1}\big)$.}
\end{thm}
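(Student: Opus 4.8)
The plan is to prove Theorem~\ref{thm:derksen}, the statement that $\mathrm{Tame}_n$ is generated by $\mathrm{Aff}_n$ together with the single Jonqui\`eres map $\tau=(z_0+z_1^2,z_1,z_2,\ldots,z_{n-1})$. Since $\mathrm{Tame}_n$ is by definition generated by $\mathrm{Aff}_n$ and the whole Jonqui\`eres subgroup $\mathrm{J}_n$, it suffices to show that every element of $\mathrm{J}_n$ lies in the subgroup $\mathrm{T}=\langle \mathrm{Aff}_n,\tau\rangle$. The idea I would pursue is first to reduce to the \emph{elementary} Jonqui\`eres maps, those of the shape
\[
e_P=\big(z_0+P(z_1,z_2,\ldots,z_{n-1}),\,z_1,\,z_2,\ldots,z_{n-1}\big),\qquad P\in\mathbb{C}[z_1,\ldots,z_{n-1}],
\]
which modify only the first coordinate by a polynomial in the remaining variables. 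A triangular automorphism in $\mathrm{J}_n$ can be written as a composition of such elementary maps (acting on the various coordinates) together with linear rescalings, so generating all the $e_P$ inside $\mathrm{T}$ is the crux of the matter.

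\emph{First} I would exploit conjugation by $\mathrm{Aff}_n$. Since $\mathrm{GL}(n;\mathbb{C})\subset\mathrm{Aff}_n$ permutes and linearly recombines the coordinates, conjugating $\tau$ by suitable linear maps produces every elementary map of the form $(z_i + \ell(z)^2, \ldots)$ where $\ell$ is an arbitrary linear form and the modification occurs in any chosen coordinate. Thus $\mathrm{T}$ contains all squares of linear forms added to a coordinate. \emph{Second}, by the polarization identity
\[
(\ell_1+\ell_2)^2-\ell_1^2-\ell_2^2=2\ell_1\ell_2,
\]
composing three such elementary maps (whose first-coordinate effects add, because each fixes $z_1,\ldots,z_{n-1}$) yields the map $z_0\mapsto z_0+2\ell_1\ell_2$ for arbitrary linear forms $\ell_1,\ell_2$. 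Hence $\mathrm{T}$ contains $e_Q$ for every product $Q$ of two linear forms, and therefore for every quadratic form. This is the base case of a degree induction.

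\emph{Third} comes the inductive step on $\deg P$, which I expect to be the main obstacle, since one must manufacture arbitrary monomials of higher degree from the quadratic building blocks using only affine conjugation and composition. The key device is the commutator trick: conjugating an elementary map $e_P$ by a \emph{nonlinear} element already known to lie in $\mathrm{T}$ (for instance $\tau$ itself, or an $e_Q$ with $Q$ quadratic) raises the degree of the polynomial being added, because the substitution $z_i\mapsto z_i + (\text{quadratic})$ inside $P$ feeds higher-degree terms into the first coordinate. Concretely, if $g\in\mathrm{T}$ sends $z_1\mapsto z_1+z_2^2$ while fixing $z_0$, then $g\,e_P\,g^{-1}$ adds $P(z_1+z_2^2,z_2,\ldots)$ to $z_0$; subtracting off $e_P$ (which is in $\mathrm{T}$ by induction) isolates the cross terms, whose leading part has strictly higher degree. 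By choosing the quadratic shifts judiciously and combining with the polarization step, one shows that the span (under addition) of the polynomials realizable in the first coordinate is closed under multiplication by each $z_j$, hence contains every monomial, hence all of $\mathbb{C}[z_1,\ldots,z_{n-1}]$. This closes the induction and shows $e_P\in\mathrm{T}$ for all $P$.

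Finally, having all elementary maps $e_P$ in $\mathrm{T}$, I would assemble an arbitrary triangular automorphism. Writing $(\phi_0,\ldots,\phi_{n-1})\in\mathrm{J}_n$ from the bottom coordinate upward, each $\phi_i$ depends only on $z_i,\ldots,z_{n-1}$ and is invertible as a map in $z_i$ for fixed higher coordinates; after peeling off a diagonal-affine part (which lies in $\mathrm{Aff}_n$), the remaining adjustment of the $i$-th coordinate by a polynomial in the strictly-higher variables is exactly an elementary map conjugated into position by a coordinate permutation from $\mathrm{GL}(n;\mathbb{C})$. Composing these finitely many factors expresses the given Jonqui\`eres automorphism as a word in $\mathrm{Aff}_n$ and the $e_P$, all of which lie in $\mathrm{T}$. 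Therefore $\mathrm{J}_n\subset\mathrm{T}$, and since $\mathrm{T}\subset\mathrm{Tame}_n$ trivially, we conclude $\mathrm{Tame}_n=\langle\mathrm{Aff}_n,\tau\rangle$, as claimed.
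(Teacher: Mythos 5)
First, a point of comparison: the paper does not prove this statement at all --- it is Derksen's theorem, quoted with a pointer to van den Essen's book \cite{VandenEssen} for the proof --- so your attempt can only be measured against that standard argument. Your architecture does match it in its first stages, and those stages are correct: the reduction of $\mathrm{J}_n$ to elementary maps $e_P=(z_0+P(z_1,\ldots,z_{n-1}),z_1,\ldots,z_{n-1})$ together with diagonal-affine parts is sound (invertibility forces each $\phi_i$ to be of the form $c_iz_i+P_i(z_{i+1},\ldots,z_{n-1})$ with $c_i\in\mathbb{C}^*$); linear conjugation of $\tau$ does give $e_{\ell^2}$ for every linear form $\ell$ in $z_1,\ldots,z_{n-1}$; and since $e_Pe_Q=e_{P+Q}$, the polarization $e_{2\ell_1\ell_2}=e_{(\ell_1+\ell_2)^2}e_{\ell_1^2}^{-1}e_{\ell_2^2}^{-1}$ correctly yields all quadratics.

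The genuine gap is your third step, exactly where you write that ``one shows that the span \ldots is closed under multiplication by each $z_j$'': as formulated, the mechanism does not deliver this. Conjugating $e_P$ by the shift $g\colon z_1\mapsto z_1+q$ (with $q\in\mathbb{C}[z_2,\ldots,z_{n-1}]$) and cancelling $e_P$ gives $e_{P(z_1\mp q,z_2,\ldots)-P}$; taking $P=z_1^2$ this is $e_{\mp 2z_1q+q^2}$, so to isolate the product $z_1q$ you must already have $e_{q^2}$, whose degree $2\deg q$ exceeds the degree $1+\deg q$ of the target --- an induction on total degree is therefore circular (e.g.\ to reach $z_1z_2^d$ you would first need $z_2^{2d}$). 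Moreover pure powers $z_1^d$ are never produced directly, since the multiplier $q$ must avoid the variable being shifted. The known repair, which is the actual content of Derksen's proof, is to polarize the \emph{commutator identity itself}: applying it with $q_1+q_2$, $q_1$, $q_2$ cancels the square terms and yields $e_{2q_1q_2}$ exactly, whenever $q_1,q_2$ are already realized and avoid a common variable. This gives closure under products for polynomials omitting one variable, hence all pure powers $z_1^d$ (using a second auxiliary variable to shift --- this is precisely where the hypothesis $n\geq 3$ enters, and for $n=2$ the statement is in fact false: by the amalgam structure of $\mathrm{Aut}(\mathbb{C}^2)$, the group $\langle\mathrm{Aff}_2,\tau\rangle$ only contains triangular maps of degree at most $2$). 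One then finishes by induction on $\deg_{z_1}$, using the commutator of $e_{z_1^{a+1}}$ with $z_1\mapsto z_1+q$, whose unwanted binomial terms $z_1^{a-j}q^{j+1}$ now have strictly smaller $z_1$-degree and are absorbed by the induction hypothesis. Your sketch contains the raw ingredients but not this bookkeeping, and as written the induction does not close; see \cite{VandenEssen} for the complete argument.
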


\begin{pro}
{\sl The group $G_n(\mathbb{C})$ contains the group of tame polynomial automorphisms of~$\mathbb{C}^n$.}
\end{pro}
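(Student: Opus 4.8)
The statement to prove is that $G_n(\mathbb{C})$ contains the group $\mathrm{Tame}_n$ of tame polynomial automorphisms of $\mathbb{C}^n$. By Theorem~\ref{thm:derksen}, $\mathrm{Tame}_n$ is generated by the affine group $\mathrm{Aff}_n$ together with the single Jonqui\`eres map $j=(z_0+z_1^2,z_1,z_2,\ldots,z_{n-1})$. Since $G_n(\mathbb{C})$ is a subgroup of $\mathrm{Bir}(\mathbb{P}^n_\mathbb{C})$, it suffices to verify that each of these generators lies in $G_n(\mathbb{C})$.

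Let me think through the pieces.
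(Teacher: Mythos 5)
Your reduction is exactly the one the paper uses: by Theorem~\ref{thm:derksen} it suffices to put $\mathrm{Aff}_n$ and the single quadratic Jonqui\`eres map $j=(z_0+z_1^2,z_1,\ldots,z_{n-1})$ inside $G_n(\mathbb{C})$, and the affine part is immediate since $\mathrm{Aff}_n\subset\mathrm{Aut}(\mathbb{P}^n_\mathbb{C})$. But your proposal stops there --- ``Let me think through the pieces'' is where the actual proof begins, and the one nontrivial piece is entirely missing: you must exhibit $j$ as a word in $\sigma_n$ and linear automorphisms. This is not a formality. For $n\geq 3$ the group $G_n(\mathbb{C})$ is a \emph{proper} subgroup of $\mathrm{Bir}(\mathbb{P}^n_\mathbb{C})$, and indeed the paper notes (corollary to Lemma~\ref{lem:pan}) that there are birational maps of degree $n=\deg\sigma_n$ lying outside $G_n(\mathbb{C})$; so membership of a given quadratic map in $G_n(\mathbb{C})$ genuinely requires a construction, and no Noether-type theorem is available to supply it in dimension $\geq 3$.

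The paper fills this gap with an explicit decomposition: writing $j$ in homogeneous coordinates as $\big(z_0z_n+z_1^2:z_1z_n:\ldots:z_{n-1}z_n:z_n^2\big)$, it verifies the identity
\[
\big(z_0z_n+z_1^2:z_1z_n:\ldots:z_{n-1}z_n:z_n^2\big)=\mathfrak{g}_1\sigma_n\,\mathfrak{g}_2\sigma_n\,\mathfrak{g}_3\sigma_n\,\mathfrak{g}_2\sigma_n\,\mathfrak{g}_4
\]
for four explicit elements $\mathfrak{g}_1,\ldots,\mathfrak{g}_4$ of $\mathrm{PGL}(n+1;\mathbb{C})$. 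Until you produce such a word (or some other argument, e.g.\ building $j$ from maps already known to lie in $G_n(\mathbb{C})$, such as the map $\varsigma$ of Lemma~\ref{lem:pratique}), your argument establishes only the easy half of the statement.
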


\begin{proof}
The inclusion $\mathrm{Aff}_n\subset\mathrm{Aut}(\mathbb{P}^n_\mathbb{C})$ is obvious; according to Theorem \ref{thm:derksen} we thus just have to prove that $\big(z_0+z_1^2,z_1,z_2,\ldots,z_{n-1}\big)$ belongs to $G_n(\mathbb{C})$. But 
\[
\big(z_0z_n+z_1^2:z_1z_n:z_2z_n:\ldots:z_{n-1}z_n:z_n^2\big)=\mathfrak{g}_1\sigma_n \mathfrak{g}_2\sigma_n \mathfrak{g}_3\sigma_n \mathfrak{g}_2\sigma_n\mathfrak{g}_4
\] 
where
\begin{align*}
& \mathfrak{g}_1=\big(z_2-z_1+z_0:2z_1-z_0:z_3:z_4:\ldots:z_n:z_1-z_0\big), \\
& \mathfrak{g}_2=\big(z_0+z_2:z_0:z_1:z_3:z_4:\ldots:z_n\big), \\
& \mathfrak{g}_3=\big(-z_1:z_0+z_2-3z_1:z_0:z_3:z_4:\ldots:z_n\big), \\
& \mathfrak{g}_4=\big(z_1-z_n:-2z_n-z_0:2z_n-z_1:-z_2:-z_3:\ldots:-z_{n-1}\big). 
\end{align*}
\end{proof}

\subsection{Free groups and $G_n(\mathbb{C})$}

Following the idea of \cite[Proposition 5.7]{CerveauDeserti} we prove that:

\begin{pro}\label{Pro:freegroup}
{\sl Let $\mathfrak{g}_0$, $\mathfrak{g}_1$, $\ldots$, $\mathfrak{g}_k$ be some generic elements of $\mathrm{Aut}(\mathbb{P}^n_\mathbb{C})$. The group generated by $\mathfrak{g}_0$, $\mathfrak{g}_1$, $\ldots$, $\mathfrak{g}_k$, and $\sigma_n$ is the free product
\[
\overbrace{\mathbb{Z}\ast\ldots\ast\mathbb{Z}}^{k+1}\,\ast\,(\mathbb{Z}/2\mathbb{Z}),
\]
the $\mathfrak{g}_i$'s and $\sigma_n$ being the generators for the factors of this free product.

In particular the subgroup $\langle \mathfrak{g}_0\sigma_n,\,\mathfrak{g}_1\sigma_n,\,\ldots,\,\mathfrak{g}_k\sigma_n\rangle$ of $G_n(\mathbb{C})$ is a free group.
}
\end{pro}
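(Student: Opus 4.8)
The plan is to prove the isomorphism through the universal property of the free product: it is enough to check that the natural surjection of the abstract group $\mathbb{Z}^{\ast(k+1)}\ast(\mathbb{Z}/2\mathbb{Z})$ onto $\langle\mathfrak{g}_0,\ldots,\mathfrak{g}_k,\sigma_n\rangle$ is injective, i.e. that every nonempty reduced word represents a nontrivial element of $\mathrm{Bir}(\mathbb{P}^n_\mathbb{C})$. A reduced word carrying $m$ letters equal to $\sigma_n$ (which has order $2$) can be written $\mathfrak{b}_0\sigma_n\mathfrak{b}_1\sigma_n\cdots\sigma_n\mathfrak{b}_m$, where each $\mathfrak{b}_j$ lies in $F:=\langle\mathfrak{g}_0,\ldots,\mathfrak{g}_k\rangle\subset\mathrm{PGL}(n+1;\mathbb{C})$ and the inner terms $\mathfrak{b}_1,\ldots,\mathfrak{b}_{m-1}$ are nontrivial. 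I would treat the cases $m=0$ and $m\geq 1$ separately.

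The whole argument rests on a no-cancellation criterion for $\sigma_n$. Recall that $\sigma_n$ has degree $n$, contracts each coordinate hyperplane $\{z_i=0\}$ onto the vertex $e_i$, and has indeterminacy locus $\mathrm{Bs}(\sigma_n)=\bigcup_{a<b}\{z_a=z_b=0\}$ of codimension $2$. If $\psi=(\psi_0:\cdots:\psi_n)$ is written in reduced form, then an irreducible $P$ divides every entry $\prod_{i\neq j}\psi_i$ of $\sigma_n\circ\psi$ if and only if $P$ divides at least two of the $\psi_i$, that is, if and only if the hypersurface $\{P=0\}$ is mapped by $\psi$ into $\mathrm{Bs}(\sigma_n)$. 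Since only hypersurfaces \emph{contracted} by $\psi$ can land in the codimension-$2$ set $\mathrm{Bs}(\sigma_n)$, this yields $\deg(\sigma_n\circ\psi)=n\deg\psi$ as soon as $\psi$ contracts no hypersurface into $\mathrm{Bs}(\sigma_n)$.

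For $m\geq 1$ I would show by induction that the initial product $\Phi_j$ carrying $j$ copies of $\sigma_n$ has degree exactly $n^j$. Post-composing a map by an automorphism changes neither its degree nor the set of hypersurfaces it contracts; it only moves the contracted loci. Hence the inductive step $\Phi_{j+1}=\sigma_n\,\mathfrak{b}\,\Phi_j$, for the relevant inner automorphism $\mathfrak{b}$, only requires via the criterion above that $\mathfrak{b}$ send the finitely many subvarieties contracted by $\Phi_j$ off $\mathrm{Bs}(\sigma_n)$. For a fixed word this is a nonempty Zariski-open condition on the tuple $(\mathfrak{g}_0,\ldots,\mathfrak{g}_k)\in\mathrm{PGL}(n+1;\mathbb{C})^{k+1}$, so the total degree equals $n^m\geq 2$ and the word is nontrivial. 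The case $m=0$ is a nonempty reduced word in $F$; that a generic tuple generates a free group of rank $k+1$ in $\mathrm{PGL}(n+1;\mathbb{C})$ is classical and is, once more, a proper closed condition to be avoided for each word.

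This is where \emph{generic} gets its meaning and where the real work sits. Each reduced word contributes one proper Zariski-closed ``bad'' locus in $\mathrm{PGL}(n+1;\mathbb{C})^{k+1}$, and there are only countably many words; since this irreducible variety over the uncountable field $\mathbb{C}$ is not a countable union of proper subvarieties, the tuples avoiding all of them form a dense set, and any such tuple satisfies the proposition. The point that needs genuine care is the nonvanishing of the top-degree coefficient for every word, equivalently the construction, for each word, of one admissible tuple realizing no cancellation: the automorphisms $\mathfrak{b}_j$ are not free parameters but prescribed words in the shared generators $\mathfrak{g}_i$, so one must check that varying a single generator occurring in $\mathfrak{b}_j$ already moves the relevant contracted loci off $\mathrm{Bs}(\sigma_n)$. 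Finally, the ``in particular'' assertion is a routine normal-form check: setting $t_i=\mathfrak{g}_i\sigma_n$, any reduced word in the $t_i^{\pm1}$ expands to a reduced word in the free product, so the $t_i$ generate a free subgroup of $G_n(\mathbb{C})$.
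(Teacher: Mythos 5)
Your overall skeleton (countably many reduced words, one Zariski-closed ``bad'' locus per word, a Baire/uncountability argument to finish) is exactly the paper's, and the reduction of the ``in particular'' claim is fine. But there is a genuine gap, and you have located it yourself without filling it: nothing in your argument proves that the bad locus of a given word is \emph{proper}, i.e.\ that for each nonempty reduced word there exists at least one tuple $(\mathfrak{g}_0,\ldots,\mathfrak{g}_k)$ on which the word is nontrivial. Your degree-induction gives a clean sufficient criterion (no contracted locus of $\Phi_j$ lands in $\mathrm{Bs}(\sigma_n)$, hence $\deg\Phi_m=n^m$), and the criterion itself is correct; but the existence of one admissible tuple is the entire content of the proposition, and you only write ``one must check that varying a single generator occurring in $\mathfrak{b}_j$ already moves the relevant contracted loci off $\mathrm{Bs}(\sigma_n)$.'' That proposed check does not work as stated: the letters $\mathfrak{b}_j$ are words in \emph{shared} generators (for $k=0$ every letter is a power of the single $\mathfrak{g}_0$), so perturbing a generator to fix the cancellation at step $j+1$ simultaneously changes $\Phi_1,\ldots,\Phi_j$ and may destroy the earlier no-cancellation conditions; an induction ``the good locus $U_j$ is open dense'' has openness for free but its nonemptiness at step $j+1$ is precisely what you cannot get by varying one occurrence independently. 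Specializing from independent letters to the diagonal of shared generators could, a priori, land entirely in the bad locus; ruling this out requires a new idea.

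The paper supplies that missing witness by a quite different, softer device: it takes the $\mathfrak{g}_i$ of the special form $\big(\alpha z_0+\beta z_1:\gamma z_0+\delta z_1:z_2:\ldots:z_n\big)$, observes that the pencil $z_0=tz_1$ is invariant under both $\sigma_n$ (acting as $t\mapsto 1/t$) and these $\mathfrak{g}_i$ (acting as M\"obius transformations), and so gets a homomorphism $\langle\mathfrak{g}_0,\ldots,\mathfrak{g}_k,\sigma_n\rangle\to\mathrm{PGL}(2;\mathbb{C})$; by the classical fact (de la Harpe) that generic M\"obius transformations together with $t\mapsto 1/t$ generate $\mathbb{Z}\ast\ldots\ast\mathbb{Z}\ast\mathbb{Z}/2\mathbb{Z}$, every fixed reduced word has nontrivial image, hence is nontrivial in $\mathrm{Bir}(\mathbb{P}^n_\mathbb{C})$, so each $R_M$ is proper. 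Note this gives only nontriviality of the word, not your stronger conclusion $\deg=n^m$ --- but nontriviality is all the proposition needs. If you want to salvage your degree-growth route, you would have to construct, per word, an explicit tuple avoiding all the incidence conditions with $\mathrm{Bs}(\sigma_n)$ along the induction; as written, your proof is incomplete at exactly its load-bearing step.
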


\begin{rem}
The meaning of "generic" is explained in the proof below.
\end{rem}

\begin{proof}
Let us show the statement for $k=0$ (in the general case it is sufficient to replace the free product $\mathbb{Z}\ast\mathbb{Z}/2\mathbb{Z}$ by $\mathbb{Z}\ast\mathbb{Z}\ast\ldots\ast\mathbb{Z}\ast\mathbb{Z}/2\mathbb{Z}$).

If $\langle \mathfrak{g},\sigma_n\rangle$ is not isomorphic to $\mathbb{Z}\ast\mathbb{Z}/2\mathbb{Z}$, then there exists a word $M_\mathfrak{g}$ in $\mathbb{Z}\ast\mathbb{Z}/2\mathbb{Z}$ such that $M_\mathfrak{g}(\mathfrak{g},\sigma_n)=\mathrm{id}$. Note that the set of words $M_\mathfrak{g}$ is countable, and that for a given word $M$ the set
\[
R_M=\big\{\mathfrak{g}\,\big\vert\, M(\mathfrak{g},\sigma_n)=\mathrm{id}\big\}
\] 
is algebraic in $\mathrm{Aut}(\mathbb{P}^n_\mathbb{C})$. Consider an automorphism $\mathfrak{g}$ written in the following form 
\[
\big(\alpha z_0+\beta z_1:\gamma z_0+\delta z_1:z_2:z_3:\ldots:z_n\big)
\]
where $\left[\begin{array}{cc}
\alpha & \beta \\
\gamma & \delta
\end{array}\right]\in\mathrm{PGL}(2;\mathbb{C})$. Since the pencil $z_0=tz_1$ is invariant by both $\sigma_n$ and $\mathfrak{g}$, one inherits a linear representation
\[
\langle \mathfrak{g},\,\sigma_n\rangle\to\mathrm{PGL}(2;\mathbb{C})
\]
defined by 
\[
\mathfrak{g}\colon t\mapsto \frac{\alpha t+\beta}{\gamma t+\delta},\qquad \sigma_n\colon t\mapsto \frac{1}{t}.
\] 
But the group generated by $\left[\begin{array}{cc} \alpha & \beta \\ \gamma & \delta \end{array}\right]$ and $\left[\begin{array}{cc} 0 & 1 \\ 1 & 0 \end{array}\right]$ is generically isomorphic to $\mathbb{Z}\ast\mathbb{Z}/2\mathbb{Z}$ (\emph{see}~\cite{delaHarpe}). Hence the complements $R_M^C$ are dense open subsets, and their intersection is dense by Baire property.
\end{proof}

\section{Some algebraic properties of $G_n(\mathbb{C})$}\label{sec:algebraic}

\subsection{The group $G_n(\mathbb{C})$ is perfect}

If $\mathrm{G}$ is a group, and if $g$ is an element of $\mathrm{G}$, we denote by 
\[
\mathrm{N}(g;\mathrm{G})=\langle fgf^{-1}\,\vert\,f\in\mathrm{G}\rangle.
\]
the normal subgroup generated by $g$ in $\mathrm{G}$.

\begin{pro}\label{pro:ssgpenormeng}
{\sl The following assertions hold:
\smallskip
\begin{enumerate}
\item $\mathrm{N}(\mathfrak{g};\mathrm{PGL}(n+1;\mathbb{C}))=\mathrm{PGL}(n+1;\mathbb{C})$ for any $\mathfrak{g}\in\mathrm{PGL}(n+1;\mathbb{C})\smallsetminus\{\mathrm{id}\}$;
\smallskip
\item $\mathrm{N}(\sigma_n;G_n(\mathbb{C}))=G_n(\mathbb{C})$;
\smallskip
\item $\mathrm{N}(\mathfrak{g};G_n(\mathbb{C}))=G_n(\mathbb{C})$ for any $\mathfrak{g}\in\mathrm{PGL}(n+1;\mathbb{C})\smallsetminus\{\mathrm{id}\}$.
\end{enumerate}}
\end{pro}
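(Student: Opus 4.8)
The plan is to handle the three assertions in order, noting that (2) and (3) both reduce, by routine normal-subgroup bookkeeping, to the simplicity statement (1) together with a single piece of genuine content: that $\sigma_n$ is conjugate \emph{inside} $G_n(\mathbb{C})$ to a nonscalar element of $\mathrm{PGL}(n+1;\mathbb{C})$.

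For assertion (1) I would simply invoke the simplicity of $\mathrm{PGL}(n+1;\mathbb{C})$. Over the algebraically closed field $\mathbb{C}$ one has $\mathrm{PGL}(n+1;\mathbb{C})=\mathrm{PSL}(n+1;\mathbb{C})$, because any invertible matrix can be rescaled to determinant $1$ (all roots being available in $\mathbb{C}$), and $\mathrm{PSL}(n+1;\mathbb{C})$ is simple; hence the normal closure of any $\mathfrak{g}\neq\mathrm{id}$ is the whole group. This is precisely the simplicity already used in the proof of Proposition \ref{pro:nonlin2}. The crucial bridging observation comes from that same proof: the map $\psi=\mathfrak{a}_1\sigma_n\mathfrak{a}_2$ lies in $G_n(\mathbb{C})$ and conjugates $\sigma_n$ to the affine involution $-\mathrm{id}$, which in the chart $z_n=1$ reads $(z_0,\ldots,z_{n-1})\mapsto(-z_0,\ldots,-z_{n-1})$, i.e. the projective transformation $\mathrm{diag}(-1,\ldots,-1,1)$. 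For $n\geq 2$ this is a \emph{nontrivial} (noncentral) element of $\mathrm{PGL}(n+1;\mathbb{C})$, so $\mathrm{N}(\sigma_n;G_n(\mathbb{C}))$ contains a nonidentity linear map.

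For assertion (2), I would argue as follows. Since $\mathrm{N}(\sigma_n;G_n(\mathbb{C}))$ is normal in $G_n(\mathbb{C})$, it is a fortiori stable under conjugation by $\mathrm{PGL}(n+1;\mathbb{C})$; containing the nonidentity element $\mathrm{diag}(-1,\ldots,-1,1)$, it therefore contains its $\mathrm{PGL}$-normal closure, which by (1) is all of $\mathrm{PGL}(n+1;\mathbb{C})$. As it also contains $\sigma_n$ by definition, it contains both families of generators of $G_n(\mathbb{C})$, whence $\mathrm{N}(\sigma_n;G_n(\mathbb{C}))=G_n(\mathbb{C})$. Assertion (3) is then immediate in the same spirit: for $\mathfrak{g}\in\mathrm{PGL}(n+1;\mathbb{C})\smallsetminus\{\mathrm{id}\}$, the subgroup $\mathrm{N}(\mathfrak{g};G_n(\mathbb{C}))$ is stable under $\mathrm{PGL}(n+1;\mathbb{C})$ and contains $\mathfrak{g}$, so by (1) it contains all of $\mathrm{PGL}(n+1;\mathbb{C})$; in particular it contains $\mathrm{diag}(-1,\ldots,-1,1)$, and hence its $\psi$-conjugate $\sigma_n=\psi^{-1}(-\mathrm{id})\psi$. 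Again it contains both sets of generators, so it equals $G_n(\mathbb{C})$.

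The only real obstacle is the bridge in (1)'s concluding remark, namely recognizing that $\sigma_n$ is conjugate within $G_n(\mathbb{C})$ to a nonscalar projective linear map; this is exactly what the explicit $\psi$ from Proposition \ref{pro:nonlin2} furnishes, and it is what converts the abstract simplicity of $\mathrm{PGL}(n+1;\mathbb{C})$ into control over the normal closures in $G_n(\mathbb{C})$. Everything else is formal manipulation of normal subgroups, so I would keep those steps brief.
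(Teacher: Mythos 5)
Your proof is correct and follows essentially the same route as the paper: both rest on the simplicity of $\mathrm{PGL}(n+1;\mathbb{C})$ together with the explicit conjugation, inside $G_n(\mathbb{C})$, of $\sigma_n$ to $-\mathrm{id}$ via the map $\psi$ from the proof of Proposition \ref{pro:nonlin2}. The only difference is presentational: where the paper rewrites an arbitrary word in $\sigma_n$ and automorphisms letter by letter as a product of conjugates of $\sigma_n$ (and deduces (3) from (2) via the inclusion $\mathrm{N}(\sigma_n;G_n(\mathbb{C}))\subset\mathrm{N}(\mathfrak{g};G_n(\mathbb{C}))$), you use the shortcut that a normal subgroup of $G_n(\mathbb{C})$ containing both generating families $\mathrm{PGL}(n+1;\mathbb{C})$ and $\sigma_n$ must equal $G_n(\mathbb{C})$, which also makes your (3) independent of (2).
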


\begin{proof}
Let us work in the affine chart $z_n=1$.
\begin{enumerate}
\item Since $\mathrm{PGL}(n+1;\mathbb{C})$ is simple one has the first assertion.

\medskip

\item Let $\phi$ be in $G_n(\mathbb{C})$; there exist $\mathfrak{g}_0$, $\mathfrak{g}_1$, $\ldots$, $\mathfrak{g}_k$ in $\mathrm{Aut}(\mathbb{P}^n_\mathbb{C})$ such that
\[
\phi=(\mathfrak{g}_0)\,\sigma_n\, \mathfrak{g}_1\,\sigma_n\,\ldots \sigma_n\,\mathfrak{g}_k\,(\sigma_n).
\]
As $\mathrm{PGL}(n+1;\mathbb{C})$ is simple 
\[
\mathrm{N}(-\mathrm{id};\mathrm{PGL}(n+1;\mathbb{C}))=\mathrm{PGL}(n+1;\mathbb{C}), 
\]
and for any $0\leq i\leq k$ there exist $\mathfrak{f}_{i,0}$, $\mathfrak{f}_{i,1}$, $\ldots$, $\mathfrak{f}_{i,\ell_i}$ in $\mathrm{PGL}(n+1;\mathbb{C})$ such that 
\[
\mathfrak{g}_i=\mathfrak{f}_{i,0}\big(-\mathrm{id}\big)\mathfrak{f}_{i,0}^{-1}\,\mathfrak{f}_{i,1}\big(-\mathrm{id}\big)\mathfrak{f}_{i,1}^{-1}\,\ldots\,\mathfrak{f}_{i,\ell_i}\big(-\mathrm{id}\big)\mathfrak{f}_{i,\ell_i}^{-1}.
\]
We conclude by using the fact that $-\mathrm{id}$ and $\sigma_n$ are conjugate via an element of $G_n(\mathbb{C})$ (\emph{see} the proof of Proposition \ref{pro:nonlin2}). 

\medskip

\item Fix $\mathfrak{g}$ in $\mathrm{PGL}(n+1;\mathbb{C})\smallsetminus\{\mathrm{id}\}$. Since $\mathrm{N}(\mathfrak{g};\mathrm{PGL}(n+1;\mathbb{C}))=\mathrm{PGL}(n+1;\mathbb{C})$, the involution~$-\mathrm{id}$ can be written as a composition of some conjugates of $\mathfrak{g}$. The maps $-\mathrm{id}$ and $\sigma_n$ being conjugate one has
\[
\sigma_n=(f_0 \mathfrak{g}f_0^{-1})\,(f_1 \mathfrak{g}f_1^{-1})\,\ldots\,(f_\ell \mathfrak{g}f_\ell^{-1})
\]
for some $f_i$ in $G_n(\mathbb{C})$. So $\mathrm{N}(\sigma_n;G_n(\mathbb{C}))\subset\mathrm{N}(\mathfrak{g};G_n(\mathbb{C}))$, and one concludes with the second assertion.
\end{enumerate}
\end{proof}

\begin{cor}\label{cor:prodconj}
{\sl The group $G_n(\mathbb{C})$ satisfies the following properties:
\smallskip
\begin{enumerate}
\item $G_n(\mathbb{C})$ is perfect, \emph{i.e.} $[G_n(\mathbb{C}),G_n(\mathbb{C})]=G_n(\mathbb{C})$;
\smallskip
\item for any $\phi$ in $G_n(\mathbb{C})$ there exist $\mathfrak{g}_0$, $\mathfrak{g}_1$, $\ldots$, $\mathfrak{g}_k$ automorphisms of $\mathbb{P}^n_\mathbb{C}$ such that 
\[
\phi=(\mathfrak{g}_0\sigma_n\mathfrak{g}_0^{-1})(\mathfrak{g}_1\sigma_n\mathfrak{g}_1^{-1})\ldots(\mathfrak{g}_k\sigma_n\mathfrak{g}_k^{-1})
\]
\end{enumerate}}
\end{cor}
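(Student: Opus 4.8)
Both assertions will rest on Proposition~\ref{pro:ssgpenormeng} together with the conjugacy of $\sigma_n$ and $-\mathrm{id}$ inside $G_n(\mathbb{C})$ established in the proof of Proposition~\ref{pro:nonlin2}; fix once and for all an element $\psi\in G_n(\mathbb{C})$ with $\psi\sigma_n\psi^{-1}=-\mathrm{id}$. For the first assertion, the plan is to show that every generator of $G_n(\mathbb{C})$ already lies in $[G_n(\mathbb{C}),G_n(\mathbb{C})]$. Since $\mathrm{PGL}(n+1;\mathbb{C})$ is simple and non-abelian, its commutator subgroup is a non-trivial normal subgroup, hence equals the whole group, so $\mathrm{PGL}(n+1;\mathbb{C})$ is perfect. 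In particular every automorphism of $\mathbb{P}^n_\mathbb{C}$, and $-\mathrm{id}$ among them, is a product of commutators of elements of $\mathrm{PGL}(n+1;\mathbb{C})\subset G_n(\mathbb{C})$, hence lies in $[G_n(\mathbb{C}),G_n(\mathbb{C})]$. Writing $-\mathrm{id}=\prod_j[u_j,v_j]$ and conjugating by $\psi$, the identity $\psi[u_j,v_j]\psi^{-1}=[\psi u_j\psi^{-1},\psi v_j\psi^{-1}]$ shows $\sigma_n=\psi^{-1}(-\mathrm{id})\psi\in[G_n(\mathbb{C}),G_n(\mathbb{C})]$ as well. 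As $\sigma_n$ and $\mathrm{Aut}(\mathbb{P}^n_\mathbb{C})$ generate $G_n(\mathbb{C})$ and the commutator subgroup is a subgroup, this forces $[G_n(\mathbb{C}),G_n(\mathbb{C})]=G_n(\mathbb{C})$.

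For the second assertion the key device is a telescoping identity. Given automorphisms $\mathfrak{g}_0,\ldots,\mathfrak{g}_{k-1}$ one has
\[
\prod_{i=0}^{k-1}\mathfrak{g}_i\sigma_n\mathfrak{g}_i^{-1}=\mathfrak{g}_0\sigma_n(\mathfrak{g}_0^{-1}\mathfrak{g}_1)\sigma_n\cdots\sigma_n(\mathfrak{g}_{k-2}^{-1}\mathfrak{g}_{k-1})\sigma_n\mathfrak{g}_{k-1}^{-1}.
\]
Read backwards, this says that any alternating word $\mathfrak{a}_0\sigma_n\mathfrak{a}_1\sigma_n\cdots\sigma_n\mathfrak{a}_k$ with $\mathfrak{a}_i\in\mathrm{PGL}(n+1;\mathbb{C})$ and $\mathfrak{a}_0\mathfrak{a}_1\cdots\mathfrak{a}_k=\mathrm{id}$ is a product of $k$ conjugates of $\sigma_n$ by automorphisms, obtained by setting $\mathfrak{g}_i=\mathfrak{a}_0\cdots\mathfrak{a}_i$. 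I would then apply this to conjugates of $\sigma_n$: writing an arbitrary $\eta\in G_n(\mathbb{C})$ as an alternating word with $s$ occurrences of $\sigma_n$ and using $\sigma_n^{-1}=\sigma_n$, the conjugate $\eta\sigma_n\eta^{-1}$ becomes an alternating word with $2s+1$ occurrences of $\sigma_n$ whose automorphism factors are symmetric and therefore multiply to the identity; hence $\eta\sigma_n\eta^{-1}$ is itself a product of conjugates of $\sigma_n$ by automorphisms.

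To conclude the second assertion I would invoke Proposition~\ref{pro:ssgpenormeng}(2), which gives $\mathrm{N}(\sigma_n;G_n(\mathbb{C}))=G_n(\mathbb{C})$: thus any $\phi\in G_n(\mathbb{C})$ is a product of $G_n(\mathbb{C})$-conjugates of $\sigma_n$, where no inverses are needed since $\sigma_n$, and hence each of its conjugates, is an involution. Replacing each $G_n(\mathbb{C})$-conjugate of $\sigma_n$ by the corresponding product of automorphism-conjugates from the previous paragraph yields an expression $\phi=(\mathfrak{g}_0\sigma_n\mathfrak{g}_0^{-1})\cdots(\mathfrak{g}_k\sigma_n\mathfrak{g}_k^{-1})$ with all $\mathfrak{g}_i\in\mathrm{Aut}(\mathbb{P}^n_\mathbb{C})$, as required.

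The only genuine obstacle is the gap between the two notions of conjugation: Proposition~\ref{pro:ssgpenormeng}(2) only produces conjugators ranging over all of $G_n(\mathbb{C})$, whereas the statement demands conjugators in the subgroup $\mathrm{Aut}(\mathbb{P}^n_\mathbb{C})$. The telescoping identity is precisely what bridges this gap, trading a single $G_n(\mathbb{C})$-conjugate of $\sigma_n$ for a product of $\mathrm{Aut}(\mathbb{P}^n_\mathbb{C})$-conjugates; the point that needs care is that the automorphism factors of the symmetrically written word $\eta\sigma_n\eta^{-1}$ really do cancel, but this is a purely formal telescoping once $\eta$ and $\eta^{-1}$ are written out, and everything else is bookkeeping with the relation $\sigma_n^2=\mathrm{id}$.
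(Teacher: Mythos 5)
Your proof is correct, but it takes a genuinely different route from the paper on both assertions. For perfectness, the paper does not invoke perfectness of $\mathrm{PGL}(n+1;\mathbb{C})$ and transport it to $\sigma_n$ as you do; instead it applies Proposition~\ref{pro:ssgpenormeng}(3) to write every element of $G_n(\mathbb{C})$ as a product of conjugates of the single automorphism $\mathfrak{t}=\big(z_0:z_1+z_n:\ldots:z_{n-1}+z_n:z_n\big)$, and then exhibits $\mathfrak{t}$ explicitly as a commutator of two automorphisms, so that every element is a product of conjugates of commutators. Your version --- the generators $\mathrm{Aut}(\mathbb{P}^n_\mathbb{C})$ and $\sigma_n=\psi^{-1}(-\mathrm{id})\psi$ all lie in the derived subgroup, which is a subgroup, hence equals $G_n(\mathbb{C})$ --- is shorter, needs no explicit matrix identity, and does not even use Proposition~\ref{pro:ssgpenormeng}; the paper's version buys the more concrete information that every element is a product of conjugates of one fixed commutator. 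For the second assertion the paper argues structurally: it introduces the set $\mathrm{H}$ of products of $\mathrm{PGL}$-conjugates of $\sigma_n$, observes that it is a subgroup (because $\sigma_n$ and its conjugates are involutions) on which $\mathrm{PGL}(n+1;\mathbb{C})$ acts by conjugation, and uses the relation $\mathfrak{d}_\alpha\,\sigma_n\,\mathfrak{d}_\alpha^{-1}=\mathfrak{d}_\alpha^2\,\sigma_n$ to conclude that every $\mathfrak{g}\mathfrak{d}_\alpha^2\mathfrak{g}^{-1}$ lies in $\mathrm{H}$; since over $\mathbb{C}$ every diagonal automorphism is the square of a diagonal one and every automorphism is a product of diagonalizable ones, this gives $\mathrm{PGL}(n+1;\mathbb{C})\subset\mathrm{H}$ and hence $\mathrm{H}=G_n(\mathbb{C})$ directly, with no appeal to Proposition~\ref{pro:ssgpenormeng}(2). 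Your telescoping device is a valid combinatorial substitute: I checked that the palindromic word $\eta\sigma_n\eta^{-1}=\mathfrak{a}_0\sigma_n\cdots\sigma_n\mathfrak{a}_s\,\sigma_n\,\mathfrak{a}_s^{-1}\sigma_n\cdots\sigma_n\mathfrak{a}_0^{-1}$ has linear factors multiplying to the identity, so your lemma with $\mathfrak{g}_i=\mathfrak{a}_0\cdots\mathfrak{a}_i$ does convert each $G_n(\mathbb{C})$-conjugate of $\sigma_n$ into a product of $2s+1$ $\mathrm{Aut}(\mathbb{P}^n_\mathbb{C})$-conjugates, and your remark that no inverses occur (all conjugates being involutions) is exactly right. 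The trade-off is that your proof of (2) now depends on Proposition~\ref{pro:ssgpenormeng}(2), hence ultimately on the simplicity of $\mathrm{PGL}(n+1;\mathbb{C})$ and the conjugation $\psi\sigma_n\psi^{-1}=-\mathrm{id}$, whereas the paper's $\mathrm{H}$-argument is self-contained and, since $\mathrm{H}=G_n(\mathbb{C})$ immediately gives $\mathrm{N}(\sigma_n;G_n(\mathbb{C}))=G_n(\mathbb{C})$, in fact re-proves that normal generation statement rather than consuming it.
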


\begin{proof}
\begin{enumerate}
\item The third assertion of Proposition \ref{pro:ssgpenormeng} implies that any element of~$G_n(\mathbb{C})$ can be written as a composition of some conjugates of 
\[
\mathfrak{t}=\big(z_0:z_1+z_n:z_2+z_n:\ldots:z_{n-1}+z_n:z_n\big). 
\]
As
\begin{small}
\[
\mathfrak{t}=\Big[\big(z_0:3z_1:3z_2:\ldots:3z_{n-1}:z_n\big),\left(2z_0:z_1+z_n:z_2+z_n:\ldots:z_{n-1}+z_n:2z_n\right)\Big],
\]
\end{small}
the group $G_n(\mathbb{C})$ is perfect.
\medskip

\item For any $\alpha_0$, $\alpha_1$, $\ldots$, $\alpha_{n}$ in $\mathbb{C}^*$ set $\mathfrak{d}(\alpha_0,\alpha_1,\ldots,\alpha_{n})=(\alpha_0z_0:\alpha_1z_1:\ldots:\alpha_{n}z_{n})$, and let us define~$\mathrm{H}$ as follows: 
\[
\mathrm{H}=\Big\{\mathfrak{g}_0\sigma_n\mathfrak{g}_0^{-1}\,\mathfrak{g}_1\sigma_n\mathfrak{g}_1^{-1}\,\ldots\,\mathfrak{g}_\ell\sigma_n\mathfrak{g}_\ell^{-1}\,\vert\,\mathfrak{g}_i\in\mathrm{PGL}(n+1;\mathbb{C}),\,\ell\in\mathbb{N}\Big\}.
\]
The second assertion of the Corollary is then equivalent to $\mathrm{H}=G_n(\mathbb{C})$. Let us remark that $\mathrm{H}$ is a group that contains $\sigma_n$, and that $\mathrm{PGL}(n+1;\mathbb{C})$ acts by conjugacy on it. One can check that
\begin{equation}\label{eq:simpl}
\mathfrak{d}_\alpha\,\sigma_n\,\mathfrak{d}_\alpha^{-1}=\mathfrak{d}_\alpha^2\,\sigma_n=\sigma_n\,\mathfrak{d}_\alpha^{-2}.
\end{equation}
Hence for each $\mathfrak{g}$ in $\mathrm{PGL}(n+1;\mathbb{C})$ we have $\mathfrak{g}\mathfrak{d}_\alpha\,\sigma_n\,\mathfrak{d}_\alpha^{-1}\mathfrak{g}^{-1}=(\mathfrak{g}\mathfrak{d}_\alpha^2\mathfrak{g}^{-1})(\mathfrak{g}\sigma_n\mathfrak{g}^{-1})$, so $\mathfrak{g}\mathfrak{d}_\alpha^2\mathfrak{g}^{-1}$ belongs to $\mathrm{H}$. Since any automorphism of $\mathbb{P}^n_\mathbb{C}$ can be written as a product of diagonalizable matrices, $\mathrm{PGL}(n+1;\mathbb{C})\subset\mathrm{H}$.
\end{enumerate}
\end{proof}

\subsection{On the restriction of automorphisms of the group birational maps to $G_n(\mathbb{C})$} 

If $M$ is a projective variety defined over a field $\Bbbk\subset\mathbb{C}$ the group $\mathrm{Aut}_\Bbbk(\mathbb{C})$ of automorphisms of the field extension $\mathbb{C}/\Bbbk$ acts on $M(\mathbb{C})$, and on both $\mathrm{Aut}(M)$ and $\mathrm{Bir}(M)$ as follows
\begin{equation}\label{eq:fieldaut}
{}^{\kappa}\!\,\psi(p)=(\kappa\psi\kappa^{-1})(p)
\end{equation}
for any $\kappa$ in $\mathrm{Aut}_\Bbbk(\mathbb{C})$, any $\psi$ in $\mathrm{Bir}(M)$, and any point $p$ in $M(\mathbb{C})$ for with both sides of $(\ref{eq:fieldaut})$ are well defined. Hence $\mathrm{Aut}_\Bbbk(\mathbb{C})$ acts by automorphisms on $\mathrm{Bir}(M)$. If $\kappa\colon\mathbb{C}\to\mathbb{C}$ is a morphism field, this contruction gives an injective morphism
\[
\mathrm{Aut}(\mathbb{P}^n_\mathbb{C})\to\mathrm{Aut}(\mathbb{P}^n_\mathbb{C})\qquad \mathfrak{g}\mapsto \mathfrak{g}^\vee.
\]
Indeed, write $\mathbb{C}$ as the algebraic closure of a purely transcendental extension $\mathbb{Q}(x_i,i\in I)$ of $\mathbb{Q}$; if $f\colon I\to I$ is an injective map, then there exists a field morphism 
\[
\kappa\colon\mathbb{C}\to\mathbb{C}\qquad x_i\mapsto x_{f(i)}.
\]
Note that such a morphism is surjective if and only if $f$ is onto.

\medskip

In $2006$, using the structure of amalgamated product of $\mathrm{Aut}(\mathbb{C}^2)$, the automorphisms of this group have been described: 

\begin{thm}[\cite{Deserti:autaut}]\label{thm:autaut}
{\sl Let $\varphi$ be an automorphism of $\mathrm{Aut}(\mathbb{C}^2)$. There exist a polynomial automorphism $\psi$ of $\mathbb{C}^2$, and a field automorphism $\kappa$ such that
\[
\varphi(f)={}^{\kappa}\!\,(\psi f\psi^{-1})\qquad \forall\,f\in\mathrm{Aut}(\mathbb{C}^2).
\]}
\end{thm}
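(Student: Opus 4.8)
The plan is to exploit the amalgamated product structure
\[
\mathrm{Aut}(\mathbb{C}^2)=\mathrm{J}_2\ast_{S}\mathrm{Aff}_2,\qquad S=\mathrm{J}_2\cap\mathrm{Aff}_2,
\]
given by Jung's theorem (\cite{Jung}), and to force the abstract automorphism $\varphi$ to respect this decomposition. By Bass--Serre theory $\mathrm{Aut}(\mathbb{C}^2)$ acts on a tree $\mathcal{T}$ whose vertex stabilizers are the conjugates of $\mathrm{J}_2$ and of $\mathrm{Aff}_2$ and whose edge stabilizers are the conjugates of $S$. The whole argument hinges on showing that $\varphi$ must send a factor to a conjugate of a factor; once this is known, the field automorphism $\kappa$ and the conjugating map $\psi$ can be extracted and the result follows by generation.

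The first and decisive step is a purely group-theoretic recognition of the two factors. I would characterize the conjugates of $\mathrm{Aff}_2$ and of $\mathrm{J}_2$ as exactly the maximal subgroups among those fixing a vertex of $\mathcal{T}$ (the elliptic, equivalently bounded, subgroups): an elliptic subgroup lies in a vertex stabilizer, so a maximal one is a vertex stabilizer, and in the reduced amalgam no vertex stabilizer contains another. These two conjugacy classes are then distinguished by an intrinsic property, namely that $\mathrm{Aff}_2$ is non-solvable (it contains $\mathrm{GL}(2;\mathbb{C})$) whereas $\mathrm{J}_2$ is solvable (it is metabelian-by-torus). Since $\varphi$ preserves solvability, it cannot exchange the two classes; the point requiring real work is that $\varphi$ preserves ellipticity at all. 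Granting this, $\varphi$ carries $\mathrm{Aff}_2$ to a conjugate of itself, and after composing with the inner automorphism $f\mapsto\psi f\psi^{-1}$ for a suitable $\psi\in\mathrm{Aut}(\mathbb{C}^2)$ I may assume $\varphi(\mathrm{Aff}_2)=\mathrm{Aff}_2$ and $\varphi(\mathrm{J}_2)=\mathrm{J}_2$.

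Next I would pin down the field automorphism. The restriction $\varphi_{\vert\mathrm{Aff}_2}$ is an abstract automorphism of $\mathrm{GL}(2;\mathbb{C})\ltimes\mathbb{C}^2$. By the rigidity theory of abstract automorphisms of reductive groups (Borel--Tits), the induced automorphism of $\mathrm{GL}(2;\mathbb{C})$ is, up to an inner automorphism, the composite of a field automorphism $\kappa\in\mathrm{Aut}(\mathbb{C})$ acting on the matrix entries and a twist by a character of the determinant; the latter twist is incompatible with the action on the translation subgroup $\mathbb{C}^2$ and so must be trivial. Thus, after a further inner adjustment and after composing with $\kappa^{-1}$, I can arrange that $\varphi$ fixes $\mathrm{Aff}_2$ pointwise, the surviving ambiguity being exactly the field automorphism $\kappa$ of the statement.

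Finally I would propagate rigidity to the second factor and conclude by generation. Since $\mathrm{Tame}_2=\mathrm{Aut}(\mathbb{C}^2)$ is generated by $\mathrm{Aff}_2$ together with the single Jonqui\`eres map $j=(z_0+z_1^2,z_1)$, it suffices to show that $\varphi$, now trivial on $\mathrm{Aff}_2$ and preserving $\mathrm{J}_2$, fixes $j$. The element $j$ together with the pointwise-fixed group $\mathrm{Aff}_2$ satisfies commutation and conjugacy relations (in particular with the diagonal torus and with the affine part of $S$) that determine $\varphi(j)$ uniquely, once the last coefficient ambiguity has been absorbed into the already-fixed $\kappa$; hence $\varphi(j)=j$ and $\varphi=\mathrm{id}$ after normalization, giving $\varphi(f)={}^{\kappa}\!\,(\psi f\psi^{-1})$ for all $f$. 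The main obstacle is squarely the first step: abstract, non-continuous group isomorphisms of $\mathbb{C}$-algebraic groups can be extremely wild---this is precisely why the field automorphism $\kappa$ is unavoidable---so the intrinsic recognition of the factors, and the proof that $\varphi$ preserves ellipticity rather than mixing the two conjugacy classes, is where the genuine difficulty lies.
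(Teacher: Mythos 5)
A point of reference first: the paper you are reading does not prove this theorem at all --- it is quoted from \cite{Deserti:autaut}, with only the remark that the proof uses the amalgamated product structure $\mathrm{Aut}(\mathbb{C}^2)=\mathrm{J}_2\ast_{\mathrm{J}_2\cap\mathrm{Aff}_2}\mathrm{Aff}_2$ of \cite{Jung}. Your overall plan (Bass--Serre tree, recognition of the two factors, rigidity of abstract automorphisms on $\mathrm{Aff}_2$, then propagation to a single Jonqui\`eres generator) does follow the same general philosophy as that original proof.

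There is, however, a genuine gap, and you have located it yourself without filling it: the sentence ``the point requiring real work is that $\varphi$ preserves ellipticity at all. Granting this\ldots'' concedes exactly the step that carries the mathematical content of the theorem. Ellipticity (fixing a vertex of the Bass--Serre tree, equivalently boundedness) is defined through the action on $\mathcal{T}$, which an abstract, possibly wildly discontinuous automorphism has no a priori reason to respect; everything after that point is comparatively routine bookkeeping. In the actual proof this step is done by purely group-theoretic characterizations separating elliptic from hyperbolic (H\'enon-type) elements, resting on Lamy's analysis of the action on the tree: a H\'enon-type automorphism has countable, virtually cyclic centralizer, whereas a translation has an uncountable abelian centralizer (and is infinitely divisible); hence $\varphi$ cannot send translations to hyperbolic elements, the image of the translation subgroup is an abelian group of elliptic elements, and such a group is conjugate into one of the two factors. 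Without an argument of this kind your first step --- and with it the whole proof --- does not get off the ground. Two smaller points should also be firmed up: (i) ``maximal elliptic $\Rightarrow$ vertex stabilizer'' requires showing that any subgroup properly containing $\mathrm{Aff}_2$ or $\mathrm{J}_2$ contains hyperbolic elements, since a subgroup can consist entirely of elliptic elements without fixing a vertex (unbounded increasing unions); and (ii) in your final step, once $\varphi$ has been normalized to be the identity on $\mathrm{Aff}_2$, no further inner conjugation is available (the centralizer of $\mathrm{Aff}_2$ in $\mathrm{Aut}(\mathbb{C}^2)$ is trivial), so the residual constant $c$ in $\varphi(j)=(z_0+cz_1^2,z_1)$ must be eliminated by explicit relations rather than ``absorbed'': it cannot be pushed into $\kappa$, which is already pinned down by its effect on $\mathrm{Aff}_2$.
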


Then, in $2011$, Kraft and Stampfli show that every automorphism of $\mathrm{Aut}(\mathbb{C}^n)$ is inner up to field automorphisms when restricted to the group $\mathrm{Tame}_n$:

\begin{thm}[\cite{KraftStampfli}]
{\sl Let $\varphi$ be an automorphism of $\mathrm{Aut}(\mathbb{C}^n)$. There exist a polynomial automorphism $\psi$ of $\mathbb{C}^n$, and a field automorphism $\kappa$ such that
\[
\varphi(f)={}^{\kappa}\!\,(\psi f\psi^{-1})\qquad \forall\,f\in\mathrm{Tame}_n.
\]}
\end{thm}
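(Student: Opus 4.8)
The plan is to determine $\varphi$ on the two families of generators of $\mathrm{Tame}_n$ furnished by Derksen's theorem (Theorem~\ref{thm:derksen}) — the affine group $\mathrm{Aff}_n$ and the single Jonqui\`eres map $j=\big(z_0+z_1^2,z_1,z_2,\ldots,z_{n-1}\big)$ — while absorbing every unavoidable ambiguity into an inner automorphism $f\mapsto\psi f\psi^{-1}$ and a field automorphism $\kappa$. Concretely, I want to modify $\varphi$ by such a conjugation and by $\kappa^{-1}$ so that the resulting automorphism $\varphi_1$ fixes $\mathrm{Aff}_n$ pointwise and fixes $j$; since these generate $\mathrm{Tame}_n$, the map $\varphi_1$ is then the identity on $\mathrm{Tame}_n$, which is exactly the asserted formula. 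In contrast with the planar case (Theorem~\ref{thm:autaut}), where $\mathrm{Aut}(\mathbb{C}^2)$ is an amalgamated product and the Bass--Serre tree controls $\varphi$, for $n\geq 3$ there is no such structure, so the argument must be driven by intrinsic group-theoretic features of $\mathrm{Aut}(\mathbb{C}^n)$.

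First I would pin down $\varphi$ on the affine group. The preliminary point is to characterise, in purely group-theoretic terms, a maximal torus $\mathbb{T}=\big\{(\alpha_0z_0,\alpha_1z_1,\ldots,\alpha_{n-1}z_{n-1})\mid\alpha_i\in\mathbb{C}^*\big\}$ and the affine subgroup $\mathrm{Aff}_n$, so that $\varphi(\mathbb{T})$ is again a maximal torus and $\varphi(\mathrm{Aff}_n)$ a copy of the affine group. Using that all maximal tori of $\mathrm{Aut}(\mathbb{C}^n)$ are conjugate, after composing $\varphi$ with a suitable inner automorphism I may assume $\varphi(\mathbb{T})=\mathbb{T}$ and $\varphi(\mathrm{Aff}_n)=\mathrm{Aff}_n$. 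On $\mathrm{Aff}_n=\mathrm{GL}(n;\mathbb{C})\ltimes\mathbb{C}^n$ I would then invoke the classical description of the abstract automorphisms of $\mathrm{GL}(n;\mathbb{C})$ for $n\geq 3$ (inner automorphisms, field automorphisms, and the contragredient $g\mapsto{}^{t}g^{-1}$) together with the $\mathrm{GL}(n;\mathbb{C})$-module structure of the normal subgroup of translations: conjugation of the translations is the standard representation, which is not isomorphic to its dual, so the contragredient cannot occur and a single field automorphism $\kappa$ is extracted. After composing with $\kappa^{-1}$ and one further inner automorphism coming from the linear and translation parts, I obtain $\varphi_1$ with $\varphi_1|_{\mathrm{Aff}_n}=\mathrm{id}$.

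It remains to prove $\varphi_1(j)=j$. Here I would exploit the relations between $j$ and the (now fixed) elements of $\mathrm{Aff}_n$. Writing $T'$ for the translations in the directions $z_0,z_2,\ldots,z_{n-1}$ and $L$ for the affine maps fixing $z_0$ and $z_1$, one checks that $j$ commutes with $T'$ and $L$, which already forces $\varphi_1(j)$ into the form $\big(z_0+\phi_0(z_1),\,\lambda z_1+\beta,\,z_2,\ldots,z_{n-1}\big)$. The conjugation relation $\tau_s\,j\,\tau_s^{-1}=a_s\,j$, where $\tau_s=(z_0,z_1+s,z_2,\ldots)$ and $a_s=(z_0-2sz_1+s^2,z_1,z_2,\ldots)$ both lie in $\mathrm{Aff}_n$, is preserved by $\varphi_1$; comparing the affine ``defect'' $a_s$ on the two sides forces $\lambda=1$ and $\phi_0=z_1^2+a_0$. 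Finally, comparing how the two torus scalings $(\nu z_0,z_1,\ldots)$ and $(z_0,\mu z_1,\ldots)$ — both fixed by $\varphi_1$ — act on $\varphi_1(j)$ yields $a_0=\nu\,a_0$ and $\beta=\lambda\,\beta$ for all scalars $\nu,\lambda\in\mathbb{C}^*$, so that $a_0=\beta=0$. Thus $\varphi_1(j)=j$, and $\varphi_1$ is the identity on $\mathrm{Tame}_n$, which proves the theorem.

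The main obstacle is the first half of the second step: the intrinsic, conjugacy-invariant characterisation of the maximal tori and of $\mathrm{Aff}_n$ inside $\mathrm{Aut}(\mathbb{C}^n)$. One must express ``maximal torus'' and ``affine subgroup'' through properties that any abstract automorphism has to respect — for instance through maximality of abelian subgroups of diagonalisable elements together with divisibility, and through the normaliser and centraliser data singling out $\mathrm{Aff}_n$ as a maximal connected algebraic subgroup — and then rule out that $\varphi$ could send $\mathrm{Aff}_n$ to some exotic, non-conjugate subgroup. This is precisely where the absence of an amalgam structure for $n\geq 3$ makes the problem substantially harder than in dimension two, and it is the technical core of the argument; by comparison, once $\varphi_1|_{\mathrm{Aff}_n}=\mathrm{id}$ is achieved, pinning down the Jonqui\`eres generator is the elementary computation with the relations above.
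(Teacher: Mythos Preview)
The paper does not contain a proof of this statement: it is quoted verbatim as a result of Kraft and Stampfli (\cite{KraftStampfli}) and is used only as context leading up to Theorem~\ref{thm:autbirpn}. There is therefore nothing in the paper to compare your proposal against.

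That said, your sketch is broadly aligned with the actual strategy of Kraft--Stampfli, and you have correctly located the genuine difficulty. The hard part of their argument is indeed the intrinsic characterisation of the diagonal torus and of the affine group inside $\mathrm{Aut}(\mathbb{C}^n)$, which they achieve via the notion of \emph{locally finite} (equivalently, algebraic) elements: they show that the closure of the subgroup of locally finite elements is preserved by any abstract automorphism, and that this forces $\varphi$ to send algebraic subgroups to algebraic subgroups, hence tori to tori. Your proposal names the obstacle but does not supply this key mechanism; ``maximality of abelian subgroups of diagonalisable elements together with divisibility'' is not by itself enough, because one must first know that $\varphi$ respects some algebraic structure (diagonalisability is not a priori a group-theoretic notion). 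Once that is in hand, your treatment of $\mathrm{Aff}_n$ via the classical description of abstract automorphisms of $\mathrm{GL}(n;\mathbb{C})$ and the elimination of the contragredient through the translation module is correct, and the endgame pinning down $\varphi_1(j)$ by commutation and conjugation relations with affine elements is exactly the right computation.
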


Even if $\mathrm{Bir}(\mathbb{P}^2_\mathbb{C})$ hasn't the same structure as $\mathrm{Aut}(\mathbb{C}^2)$ (\emph{see} Appendix of \cite{CantatLamy}) the automorphisms group of $\mathrm{Bir}(\mathbb{P}^2_\mathbb{C})$ can be described, and a similar result as Theorem \ref{thm:autaut} is obtained (\cite{Deserti:autbir}). There is no such result in higher dimension; nevertheless in \cite{Cantat} Cantat classifies all (abstract) homomorphisms from $\mathrm{PGL}(k+1;\mathbb{C})$ to the group $\mathrm{Bir}(M)$ of birational maps of a complex projective variety $M$, provided $k\geq\dim_\mathbb{C}M$. Before recalling his statement let us introduce some notation. Given $\mathfrak{g}$ in $\mathrm{Aut}(\mathbb{P}^n_\mathbb{C})=\mathrm{PGL}(n+1;\mathbb{C})$ we denote by ${{}^{\mathrm{t}}\!\, } \mathfrak{g}$ the linear transpose of $\mathfrak{g}$. The involution 
\[
\mathfrak{g}\mapsto \mathfrak{g}^{\vee}=({}^{\mathrm{t}}\!\, \mathfrak{g})^{-1}
\]
determines an exterior and algebraic automorphism of the group $\mathrm{Aut}(\mathbb{P}^n_\mathbb{C})$ (\emph{see} \cite{Dieudonne}). 

\begin{thm}[\cite{Cantat}]\label{thm:cantat}
{\sl Let $M$ be a smooth, connected, complex projective variety, and let $n$ be its dimension. Let $k$ be a positive integer, and let $\rho\colon\mathrm{Aut}(\mathbb{P}^k_\mathbb{C})\to\mathrm{Bir}(M)$ be an injective morphism of groups. Then $n\geq k$, and if $n=k$ there exists a field morphism $\kappa\colon\mathbb{C}\to\mathbb{C}$, and a birational map~$\psi\colon M\dashrightarrow\mathbb{P}^n_\mathbb{C}$ such that either
\[
\psi\,\rho(\mathfrak{g})\,\psi^{-1}={}^{\kappa}\!\, \mathfrak{g}\qquad\forall\,\mathfrak{g}\in\mathrm{Aut}(\mathbb{P}^n_\mathbb{C})
\]
or
\[
\psi\,\rho(\mathfrak{g})\,\psi^{-1}=({}^{\kappa}\!\, \mathfrak{g})^{\vee}\qquad\forall\,\mathfrak{g}\in\mathrm{Aut}(\mathbb{P}^n_\mathbb{C});
\]
in particular $M$ is rational. Moreover, $\kappa$ is an automorphism of $\mathbb{C}$ if $\rho$ is an isomorphism.}
\end{thm}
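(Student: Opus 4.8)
The plan is to convert the purely abstract homomorphism $\rho$ into an \emph{algebraic} (biregular) action and then to read off the geometry from the representation theory of the simple group $\mathrm{Aut}(\mathbb{P}^k_\mathbb{C})=\mathrm{PGL}(k+1;\mathbb{C})$. First I would try to show that, after replacing $\rho$ by ${}^{\kappa^{-1}}\!\,\rho$ for a suitable field endomorphism $\kappa$ of $\mathbb{C}$, the restriction of $\rho$ to the algebraic subgroups of $\mathrm{PGL}(k+1;\mathbb{C})$ --- a maximal torus $T\cong(\mathbb{C}^*)^k$ together with the one-parameter unipotent subgroups $\mathbb{G}_a$ generating a Borel --- is given by a rational action, i.e. is algebraic. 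The crucial input here is dynamical: each unipotent flow $t\mapsto\rho(u(t))$ should be a family of birational maps whose degrees stay polynomially controlled in $t$, and combined with Borel--Tits rigidity for abstract homomorphisms out of the simple algebraic group $\mathrm{PGL}(k+1;\mathbb{C})$ this forces $\rho$ to be algebraic up to a single field endomorphism $\kappa$. Once the action is rational, Weil's regularization theorem furnishes a smooth projective model $M'$ birational to $M$ on which the image of $\rho$ acts by \emph{automorphisms}, birationally equivariantly; the birational map $M'\dashrightarrow M$ will eventually be absorbed into $\psi$.

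Next I would establish the bound $n\geq k$. Since $\rho$ is injective and $\mathrm{PGL}(k+1;\mathbb{C})$ is simple, the regular action on $M'$ is faithful and nontrivial, so the stabilizer $H$ of a generic point is a proper closed subgroup and the generic orbit has dimension $\dim G-\dim H=\mathrm{codim}\,H$. The minimal codimension of a proper closed subgroup of $\mathrm{PGL}(k+1;\mathbb{C})$ equals $k$, attained exactly by the maximal parabolics stabilizing a point or a hyperplane, with quotients $\mathbb{P}^k$ and its dual $(\mathbb{P}^k)^\vee$. Hence the generic orbit has dimension at least $k$, and $n=\dim M=\dim M'\geq k$.

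In the boundary case $n=k$ the generic orbit is dense, so $M'$ is birational to a homogeneous space $G/H$ whose connected stabilizer $H^\circ$ is one of those maximal parabolics. The two conjugacy classes give precisely $\mathbb{P}^k$ and $(\mathbb{P}^k)^\vee$, which are interchanged by the exterior automorphism $\mathfrak{g}\mapsto\mathfrak{g}^\vee=({}^{\mathrm{t}}\!\,\mathfrak{g})^{-1}$. Letting $\psi\colon M\dashrightarrow\mathbb{P}^n_\mathbb{C}$ be the birational map identifying $M$ with the dense orbit, this yields exactly the announced dichotomy $\psi\,\rho(\mathfrak{g})\,\psi^{-1}={}^{\kappa}\!\,\mathfrak{g}$ in the first case and $\psi\,\rho(\mathfrak{g})\,\psi^{-1}=({}^{\kappa}\!\,\mathfrak{g})^{\vee}$ in the second; since $\mathbb{P}^k$ and its dual are rational, $M$ is rational. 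Finally $\kappa$ is surjective --- hence an automorphism of $\mathbb{C}$ --- exactly when $\rho$ is onto, by the surjectivity criterion recalled in the excerpt (the endomorphism $x_i\mapsto x_{f(i)}$ is onto if and only if $f$ is).

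The genuinely hard step is the regularization in the first paragraph. Abstract homomorphisms of $\mathrm{PGL}(k+1;\mathbb{C})$ can a priori be wild, because $\mathrm{Aut}(\mathbb{C})$ acts through highly discontinuous field automorphisms; extracting one field endomorphism $\kappa$ that simultaneously algebraizes $\rho$, and proving that the unipotent flows have controlled degree growth so that Weil's theorem genuinely applies, is the technical heart of the proof. This is exactly the place where the dynamics of birational maps (degree sequences, behaviour of invariant pencils and fibrations) must be married to the algebraic-group rigidity, and I expect it to be where all the real difficulty lies.
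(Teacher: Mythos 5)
First, a point of order: the paper does not prove this statement at all --- it is quoted verbatim from \cite{Cantat} and used as a black box in the proof of Theorem~\ref{thm:autbirpn}. So the honest comparison is with Cantat's original argument, and against that benchmark your proposal has a genuine gap exactly where you suspect it does, but the gap is worse than an unfinished technicality: the regularization step you describe is not a known tool you can invoke, it \emph{is} the theorem. Borel--Tits rigidity classifies abstract homomorphisms between groups of rational points of algebraic groups; it says nothing about homomorphisms into $\mathrm{Bir}(M)$, which is not an algebraic group (nor even locally algebraic in a useful sense), and your claimed ``polynomial degree control'' of $t\mapsto\rho(u(t))$ is false a priori: precomposing the parameter of a unipotent one-parameter subgroup with a wild additive endomorphism of $\mathbb{C}$ yields another abstract homomorphism with completely uncontrolled degree behaviour --- this is precisely why the field morphism $\kappa$ appears in the statement, and why it need not be continuous. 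There is also a small but real formal error: you propose to replace $\rho$ by ${}^{\kappa^{-1}}\!\,\rho$, but $\kappa$ is in general only a field \emph{endomorphism} of $\mathbb{C}$ (it is an automorphism only when $\rho$ is an isomorphism, as the last sentence of the statement records), so it has no inverse and the twist cannot be undone; the correct formulation is $\rho={}^{\kappa}\!\,(\text{algebraic})$, and one must work on that side throughout.

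It is also worth recording how Cantat actually proceeds, because the order of quantifiers differs from yours in an instructive way. Your plan derives $n\geq k$ \emph{after} regularization (via the fact that proper closed subgroups of $\mathrm{PGL}(k+1;\mathbb{C})$ have codimension $\geq k$, with equality only for the two classes of maximal parabolics); but if $n<k$ there is no algebraic action to regularize to, so the inequality must come first and by softer means. Cantat gets it from torsion: the maximal torus of $\mathrm{PGL}(k+1;\mathbb{C})$ contains $(\mathbb{Z}/p\mathbb{Z})^k$ for every prime $p$, while for $p$ large a finite abelian $p$-subgroup of $\mathrm{Bir}(M)$ has rank at most $\dim M$; injectivity of $\rho$ then forces $n\geq k$ with no regularity whatsoever. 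In the equality case $n=k$ he algebraizes the image of the maximal torus using the theory of algebraic subgroups of $\mathrm{Bir}(M)$ and Demazure's classification of rank-$n$ tori (which already yields rationality of $M$ and puts the torus in standard form), and only then reconstructs the root subgroups, which is where the single field morphism $\kappa$ is extracted, roughly by showing the parametrization of each $\mathbb{G}_a$ is additive and compatible with the torus scaling, hence multiplicative. Your endgame --- dense orbit isomorphic to $G/P$ with $P$ one of the two maximal parabolic classes, interchanged by $\mathfrak{g}\mapsto\mathfrak{g}^{\vee}$, giving the stated dichotomy and rationality --- is a correct account of where the two alternatives come from, but as a proof the proposal stands or falls on the first paragraph, and as written that paragraph asserts rather than proves the theorem's entire content.
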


Let us give the proof of Theorem \ref{thm:ptesgpes2}:

\begin{thm}\label{thm:autbirpn}
{\sl Let $\varphi$ be an automorphism of $\mathrm{Bir}(\mathbb{P}^n_\mathbb{C})$. There exists a birational map $\psi$ of $\mathbb{P}^n_\mathbb{C}$, and a field automorphism $\kappa$ such that 
\[
\varphi(g)={}^{\kappa}\!\, (\psi g\psi^{-1})\qquad \forall\,g\in G_n(\mathbb{C}).
\]} 
\end{thm}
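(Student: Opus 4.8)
The plan is to leverage Cantat's Theorem~\ref{thm:cantat} applied to the restriction of $\varphi$ to $\mathrm{Aut}(\mathbb{P}^n_\mathbb{C})=\mathrm{PGL}(n+1;\mathbb{C})$. First I would consider the homomorphism $\rho=\varphi_{\vert\mathrm{Aut}(\mathbb{P}^n_\mathbb{C})}\colon\mathrm{PGL}(n+1;\mathbb{C})\to\mathrm{Bir}(\mathbb{P}^n_\mathbb{C})$. Since $\varphi$ is an automorphism of $\mathrm{Bir}(\mathbb{P}^n_\mathbb{C})$ it is injective, so $\rho$ is an injective morphism from $\mathrm{Aut}(\mathbb{P}^k_\mathbb{C})$ with $k=n$ into $\mathrm{Bir}(M)$ with $M=\mathbb{P}^n_\mathbb{C}$ of dimension $n$; the hypothesis $k\geq\dim_\mathbb{C}M$ holds with equality. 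Theorem~\ref{thm:cantat} then produces a field morphism $\kappa\colon\mathbb{C}\to\mathbb{C}$ and a birational map $\psi\colon\mathbb{P}^n_\mathbb{C}\dashrightarrow\mathbb{P}^n_\mathbb{C}$ such that, after conjugating by $\psi$, the restriction of $\varphi$ to $\mathrm{Aut}(\mathbb{P}^n_\mathbb{C})$ is either $\mathfrak{g}\mapsto{}^{\kappa}\!\,\mathfrak{g}$ or $\mathfrak{g}\mapsto({}^{\kappa}\!\,\mathfrak{g})^\vee$. Moreover, because $\varphi$ is an isomorphism of $\mathrm{Bir}(\mathbb{P}^n_\mathbb{C})$ the induced $\kappa$ is an automorphism of $\mathbb{C}$, not merely a field morphism.

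Next I would eliminate the transpose-inverse alternative $({}^{\kappa}\!\,\mathfrak{g})^\vee$. The strategy is to replace $\varphi$ by the composite $\varphi':=(\text{conjugation by }\psi^{-1})\circ({}^{\kappa^{-1}}\!\,(\,\cdot\,))\circ\varphi$, so that $\varphi'$ restricts to the identity (resp.\ to the involution $\mathfrak{g}\mapsto\mathfrak{g}^\vee$) on $\mathrm{Aut}(\mathbb{P}^n_\mathbb{C})$; it suffices to treat $\varphi'$. To rule out the $\vee$-case I would use the fact, recorded earlier, that $\mathfrak{g}\mapsto\mathfrak{g}^\vee$ is an \emph{exterior} automorphism of $\mathrm{PGL}(n+1;\mathbb{C})$ together with some geometric invariant that $\varphi'$ must preserve. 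A convenient obstruction comes from looking at how the involution $\sigma_n$ interacts with $\mathrm{Aut}(\mathbb{P}^n_\mathbb{C})$ under $\varphi'$: since $G_n(\mathbb{C})=\langle\sigma_n,\mathrm{Aut}(\mathbb{P}^n_\mathbb{C})\rangle$, the map $\varphi'$ sends $\sigma_n$ to some involution that must conjugate the image of $\mathrm{Aut}(\mathbb{P}^n_\mathbb{C})$ in a way compatible with the original relations, and the transpose-inverse normalization would force incompatible behaviour on the standard Cremona involution. Absorbing the $\vee$ into $\psi$ (note $\sigma_n$ is conjugate to its own $\vee$-image, while a generic linear map is not) lets me assume $\varphi'_{\vert\mathrm{Aut}(\mathbb{P}^n_\mathbb{C})}=\mathrm{id}$.

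Having arranged $\varphi'_{\vert\mathrm{Aut}(\mathbb{P}^n_\mathbb{C})}=\mathrm{id}$, the remaining task is to show $\varphi'(\sigma_n)=\sigma_n$, since then $\varphi'$ fixes a generating set of $G_n(\mathbb{C})$ and is therefore the identity on $G_n(\mathbb{C})$, which is exactly the claimed form $\varphi(g)={}^{\kappa}\!\,(\psi g\psi^{-1})$ for all $g\in G_n(\mathbb{C})$. To identify $\varphi'(\sigma_n)$ I would exploit that $\sigma_n$ is characterized inside $\mathrm{Bir}(\mathbb{P}^n_\mathbb{C})$ by its commutation and conjugation relations with the torus and the permutation subgroup of $\mathrm{Aut}(\mathbb{P}^n_\mathbb{C})$: $\sigma_n$ normalizes the diagonal torus $\mathfrak{d}(\alpha_0,\ldots,\alpha_n)$, inverting it as in relation~(\ref{eq:simpl}), and commutes with coordinate permutations. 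Since $\varphi'$ fixes every diagonal automorphism and every permutation, the image $\varphi'(\sigma_n)$ must satisfy the same relations; I would argue that an involution in $\mathrm{Bir}(\mathbb{P}^n_\mathbb{C})$ normalizing the torus by inversion and centralizing the permutation group is forced to be $\sigma_n$ itself (the normalizer of the torus in $\mathrm{Bir}(\mathbb{P}^n_\mathbb{C})$ is controlled, and the inversion-on-the-torus condition pins down the Cremona involution up to a torus element, which the permutation-commutation then rigidifies).

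The main obstacle I expect is this last rigidity step: extracting $\varphi'(\sigma_n)=\sigma_n$ purely from the relations that $\sigma_n$ satisfies with $\mathrm{Aut}(\mathbb{P}^n_\mathbb{C})$ requires understanding the normalizer of the standard torus inside the full Cremona group $\mathrm{Bir}(\mathbb{P}^n_\mathbb{C})$, which is genuinely subtler in dimension $n\geq 3$ than the relation-bookkeeping suggests; the torus-normalizer can contain monomial maps beyond the obvious ones, and I would need to use the full force of the commutation with permutations (and possibly with a Jonqui\`eres-type element available in $G_n(\mathbb{C})$ by Proposition~\ref{pro:subgroup}) to cut the ambiguity down to $\sigma_n$ exactly. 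The elimination of the $\vee$-alternative is the second delicate point, since it is where the specific geometry of the standard involution, rather than soft group theory, must intervene.
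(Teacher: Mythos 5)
Your skeleton coincides with the paper's own proof (apply Cantat's Theorem~\ref{thm:cantat} to $\varphi_{\vert\mathrm{Aut}(\mathbb{P}^n_\mathbb{C})}$, normalize, then pin down $\varphi(\sigma_n)$ through its relations with the torus and the permutations), but the two steps you yourself flag as delicate are genuinely missing, and one of your proposed mechanisms is impossible. You cannot ``absorb the $\vee$ into $\psi$'': a birational map $\psi$ with $\psi\mathfrak{g}\psi^{-1}=\mathfrak{g}^{\vee}$ for all $\mathfrak{g}\in\mathrm{PGL}(n+1;\mathbb{C})$ would be a $\mathrm{PGL}$-equivariant birational map from $\mathbb{P}^n_\mathbb{C}$ to its dual; since the action is transitive such a map would have to be biregular, hence linear, contradicting the fact that $\mathfrak{g}\mapsto\mathfrak{g}^{\vee}$ is \emph{outer} for $n\geq 2$ --- this is precisely why Cantat's theorem has two genuinely distinct cases. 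So the $\vee$-alternative must be \emph{excluded}, not normalized away, and your appeal to ``incompatible behaviour'' is not an argument (note also that $\vee$ is only defined on $\mathrm{PGL}(n+1;\mathbb{C})$, so ``$\sigma_n$ is conjugate to its own $\vee$-image'' has no meaning).

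Your rigidity step is false as stated: relations with the diagonal torus and the permutation group cannot characterize $\sigma_n$. The normalizer-of-the-torus argument (Demazure) does give that $\varphi(\sigma_n)$ is $\mathfrak{d}_c\circ\sigma_n$ for some torus element $c$, but commutation with all permutations only forces $c=(\alpha,\alpha,\ldots,\alpha)$, and every $\sigma_\alpha=(\alpha/z_0,\ldots,\alpha/z_{n-1})$ is an involution satisfying \emph{exactly} the same relations as $\sigma_n$: indeed $\sigma_\alpha$ is the conjugate of $\sigma_n$ by the scalar map $z\mapsto\sqrt{\alpha}\,z$, which centralizes both the torus and the permutations, so no amount of bookkeeping inside the subgroup they generate can eliminate $\alpha$. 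The paper supplies the missing ingredient with one extra relation: it introduces $\mathfrak{h}_n=\bigl(\frac{z_0}{z_0-1},\frac{z_0-z_1}{z_0-1},\ldots,\frac{z_0-z_{n-1}}{z_0-1}\bigr)$, which is fixed by $\varphi$ in the identity case, and uses $(\mathfrak{h}_n\sigma_n)^3=\mathrm{id}$; applied to $\mathfrak{h}_n\sigma_\alpha$ this forces $\alpha=1$, and in the $\vee$-case one checks $\bigl(\varphi(\mathfrak{h}_n)\varphi(\sigma_n)\bigr)^3\neq\mathrm{id}$ for every residual candidate, which is the contradiction killing that alternative. (The paper also decomposes $\sigma_n=(\tau_0\eta\tau_0)(\tau_1\eta\tau_1)\cdots(\tau_{n-2}\eta\tau_{n-2})\eta$ with $\eta=(z_0,\ldots,z_{n-2},1/z_{n-1})$ and uses commutation with translations to kill sign ambiguities, but that is the same bookkeeping you propose.) Without an element like $\mathfrak{h}_n$ and its order-three relation with $\sigma_n$, both of your delicate steps remain open, so the proposal has a genuine gap at exactly the points where the actual content of the proof lies.
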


\begin{proof}
Let us consider $\varphi\in\mathrm{Aut}(\mathrm{Bir}(\mathbb{P}^n_\mathbb{C}))$. Theorem \ref{thm:cantat} implies that up to birational conjugacy and up the action of a field automorphism 
\begin{equation}\label{eq:alternative}
\left\{
\begin{array}{ll}
\text{ either $\varphi(\mathfrak{g})=\mathfrak{g}\qquad\forall\,\mathfrak{g}\in\mathrm{Aut}(\mathbb{P}^n_\mathbb{C})$}\\
\text{or $\varphi(\mathfrak{g})=\mathfrak{g}^{\vee}\qquad\forall\,\mathfrak{g}\in\mathrm{Aut}(\mathbb{P}^n_\mathbb{C}).$}
\end{array}
\right.
\end{equation}
In other words up to birational conjugacy and up to the action of a field automorphism one cas assume that either $\varphi_{\vert\mathrm{Aut}(\mathbb{P}^n_\mathbb{C})}\colon \mathfrak{g}\mapsto\mathfrak{g}$, or $\varphi_{\vert\mathrm{Aut}(\mathbb{P}^n_\mathbb{C})}\colon \mathfrak{g}\mapsto\mathfrak{g}^\vee$.
Now determine $\varphi(\sigma_n)$. Let us work in the affine chart $z_n=1$. For $0\leq i\leq n-2$ denote by $\tau_i$ the automorphism of $\mathbb{P}^n_\mathbb{C}$ that permutes $z_i$ and $z_{n-1}$
\[
\tau_i=\big(z_0,z_1,\ldots,z_{i-1},z_{n-1},z_{i+1},z_{i+2},\ldots,z_{n-2},z_i\big).
\]
Let $\eta$ be given by
\[
\eta=\left(z_0,z_1,\ldots,z_{n-2},\frac{1}{z_{n-1}}\right).
\]
One has 
\[
\sigma_n=\big(\tau_0\eta\tau_0\big)\,\big(\tau_1\eta\tau_1\big)\,\ldots\,\big(\tau_{n-2}\eta\tau_{n-2}\big)\,\eta
\]
so
\[
\varphi(\sigma_n)=\big(\varphi(\tau_0)\varphi(\eta)\varphi(\tau_0)\big)\big(\varphi(\tau_1)\varphi(\eta)\varphi(\tau_1)\big)\ldots\big(\varphi(\tau_{n-2})\varphi(\eta)\varphi(\tau_{n-2})\big)\varphi(\eta).
\] 
Since any $\tau_i$ belongs to $\mathrm{Aut}(\mathbb{P}^n_\mathbb{C})$ one can, thanks to (\ref{eq:alternative}), compute $\varphi(\tau_i)$, and one gets: $\varphi(\tau_i)=\tau_i$. 

\smallskip

Let us now focus on $\varphi(\eta)$. We will distinguish the two cases of (\ref{eq:alternative}). Assume that $\varphi_{\vert\mathrm{PGL}(n+1;\mathbb{C})}=\mathrm{id}$. For any $\alpha=(\alpha_0,\alpha_1,\ldots,\alpha_{n-1})$ in $(\mathbb{C}^*)^n$ set 
\[
\mathfrak{d}_\alpha=(\alpha_0z_0,\alpha_1z_1,\ldots,\alpha_{n-1}z_{n-1}); 
\]
the involution $\eta$ satisfies for any $\alpha=(\alpha_0,\alpha_1,\ldots,\alpha_{n-1})\in (\mathbb{C}^*)^n$
\[
\mathfrak{d}_\beta\eta=\eta \mathfrak{d}_\alpha
\]
where $\beta=(\alpha_0,\alpha_1,\ldots,\alpha_{n-1}^{-1})$. Hence $\varphi(\eta)=\left(\pm z_0,\pm z_1,\ldots,\pm z_{n-2},\frac{\alpha}{z_{n-1}}\right)$ for $\alpha\in\mathbb{C}^*$. As $\eta$ commutes with
\[
\mathfrak{t}=\big(z_0+1,z_1+1,\ldots,z_{n-2}+1,z_{n-1}\big), 
\]
the image $\varphi(\eta)$ of $\eta$ commutes to $\varphi(\mathfrak{t})=\mathfrak{t}$. Therefore 
\[
\varphi(\eta)=\left(z_0,z_1,\ldots,z_{n-2},\frac{\alpha}{z_{n-1}}\right).
\]
If 
\[
\mathfrak{h}_n=\left(\frac{z_0}{z_0-1},\frac{z_0-z_1}{z_0-1},\frac{z_0-z_2}{z_0-1},\ldots,\frac{z_0-z_{n-1}}{z_0-1}\right)
\]
then $\varphi(\mathfrak{h}_n)=\mathfrak{h}_n$, and $(\mathfrak{h}_n\sigma_n)^3=\mathrm{id}$ implies that $\varphi(\sigma_n)=~\sigma_n$. If $\varphi_{\vert\mathrm{PGL}(n+1;\mathbb{C})}$ coincides with $\mathfrak{g}\mapsto \mathfrak{g}^{\vee}$, a similar argument yields $\big(\varphi(\mathfrak{h}_n)\varphi(\sigma_n)\big)^3\not=\mathrm{id}$.
\end{proof}

\subsection{Simplicity of $G_n(\mathbb{C})$}

An \textbf{\textit{algebraic family}} of $\mathrm{Bir}(\mathbb{P}^n_\mathbb{C})$ is the data of a rational map 
\[
\phi\colon M\times\mathbb{P}^n_\mathbb{C}\dashrightarrow\mathbb{P}^n_\mathbb{C}, 
\]
where $M$ is a $\mathbb{C}$-variety, defined on a dense open subset $\mathcal{U}$ such that
\begin{itemize}
\item for any $m\in M$ the intersection $\mathcal{U}_m=\mathcal{U}\cap(\{m\}\times\mathbb{P}^n_\mathbb{C})$ is a dense open subset of $\{m\}\times\mathbb{P}^n_\mathbb{C}$,

\item and the restriction of $\mathrm{id}\times \phi$ to $\mathcal{U}$ is an isomorphism of $\mathcal{U}$ on a dense open subset of $M\times\mathbb{P}^n_\mathbb{C}$.
\end{itemize}

For any $m\in M$ the birational map $z\dashrightarrow \phi(m,z)$ represents an element $\phi_m$ in $\mathrm{Bir}(\mathbb{P}^n_\mathbb{C})$; the map 
\[
M\to\mathrm{Bir}(\mathbb{P}^n_\mathbb{C}),\qquad m\mapsto \phi_m
\]
is called \textbf{\textit{morphism}} from $M$ to $\mathrm{Bir}(\mathbb{P}^n_\mathbb{C})$. These notions yield the natural Zariski topology on~$\mathrm{Bir}(\mathbb{P}^n_\mathbb{C})$, introduced by Demazure (\cite{Demazure}) and Serre (\cite{Serre}): the subset $\Omega$ of~$\mathrm{Bir}(\mathbb{P}^n_\mathbb{C})$ is \textbf{\textit{closed}} if for any $\mathbb{C}$-variety $M$, and any morphism $M\to\mathrm{Bir}(\mathbb{P}^n_\mathbb{C})$ the preimage of $\Omega$ in $M$ is closed. Note that in restriction to $\mathrm{Aut}(\mathbb{P}^n_\mathbb{C})$ one obtains the usual Zariski topology of the algebraic group $\mathrm{Aut}(\mathbb{P}^n_\mathbb{C})=\mathrm{PGL}(n+1;\mathbb{C})$.

Let us recall the following statement: 

\begin{pro}[\cite{Blanc}]\label{pro:blanc}
{\sl Let $n\geq 2$. Let $\mathrm{H}$ be a non-trivial, normal, and closed subgroup of~$\mathrm{Bir}(\mathbb{P}^n_\mathbb{C})$. Then $\mathrm{H}$ contains $\mathrm{Aut}(\mathbb{P}^n_\mathbb{C})$ and $\mathrm{PSL}\big(2;\mathbb{C}(z_0,z_1,\ldots,z_{n-2})\big)$.}
\end{pro}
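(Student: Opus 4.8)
The plan is to establish simplicity in the form: every closed normal subgroup $N\trianglelefteq G_n(\Bbbk)$ with $N\neq\{\mathrm{id}\}$ equals $G_n(\Bbbk)$. The whole strategy rests on Proposition~\ref{pro:ssgpenormeng}, by which the normal closure in $G_n(\Bbbk)$ of $\sigma_n$, and likewise of any non-identity element of $\mathrm{Aut}(\mathbb{P}^n_\Bbbk)=\mathrm{PGL}(n+1;\Bbbk)$, is the full group. It therefore suffices to produce inside $N$ a single element that is either $\sigma_n$ or a non-trivial automorphism of $\mathbb{P}^n_\Bbbk$: being normal, $N$ then contains the corresponding normal closure, i.e. $N=G_n(\Bbbk)$.

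First I would examine the intersection $N\cap\mathrm{Aut}(\mathbb{P}^n_\Bbbk)$. If it is non-trivial it contains a non-trivial automorphism and the first paragraph already gives $N=G_n(\Bbbk)$. Everything thus comes down to ruling out $N\cap\mathrm{Aut}(\mathbb{P}^n_\Bbbk)=\{\mathrm{id}\}$ for a non-trivial $N$. Here mere normality cannot be enough, since abstractly $G_n(\Bbbk)$ is far from simple—already $G_2(\mathbb{C})=\mathrm{Bir}(\mathbb{P}^2_\mathbb{C})$ carries proper normal subgroups by \cite{CantatLamy}—so the Zariski closedness of $N$ must intervene in an essential way.

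This is the heart of the matter and the step I expect to be the main obstacle. The idea, which is exactly the limit argument underpinning Blanc's Proposition~\ref{pro:blanc}, is to degenerate a non-trivial $g\in N$ to a non-trivial automorphism. Choosing a one-parameter subgroup $\gamma_t\subset\mathrm{Aut}(\mathbb{P}^n_\Bbbk)$ of diagonal automorphisms with suitable weights, the conjugates $\gamma_t\,g\,\gamma_t^{-1}$ form an algebraic family contained in $N$ by normality; one then tunes the weights so that this family specialises at $t=0$ to a non-identity element of $\mathrm{PGL}(n+1;\Bbbk)$. The crucial remark is that this procedure only conjugates by automorphisms of $\mathbb{P}^n_\Bbbk$ and passes to Zariski limits, two operations under which a closed normal subgroup $N\trianglelefteq G_n(\Bbbk)$ is stable; hence Blanc's argument transfers verbatim from $\mathrm{Bir}(\mathbb{P}^n_\Bbbk)$ to $G_n(\Bbbk)$, and over an arbitrary algebraically closed field.

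The delicate point is the control of this limit. One must realise the conjugates $\gamma_t g\gamma_t^{-1}$, for $t\neq 0$, as a morphism from the affine line to $G_n(\Bbbk)$ in the sense of the Zariski topology recalled above, and arrange the weights of $\gamma_t$ so that the morphism extends across $t=0$ to a value that is a non-trivial automorphism. Closedness then does the rest: the preimage of $N$ under this morphism is a closed subset of the line containing the dense subset $\{t\neq 0\}$, hence containing $t=0$, so the limit lies in $N$ and contradicts $N\cap\mathrm{Aut}(\mathbb{P}^n_\Bbbk)=\{\mathrm{id}\}$. Existence of the extension is essentially bookkeeping on the lowest-weight monomials of the components of $g$; the genuinely delicate issue is non-triviality of the limit, which I would secure by tracking some invariant of $g$ through the degeneration—for instance its action on an invariant pencil of lines, in the spirit of the representation into $\mathrm{PGL}(2;\Bbbk)$ used in the proof of Proposition~\ref{Pro:freegroup}. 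Granting this, the dichotomy of the second paragraph is exhaustive and yields the simplicity of $G_n(\Bbbk)$.
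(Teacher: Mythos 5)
Your proposal does not prove the proposition it was assigned. Proposition~\ref{pro:blanc} is Blanc's theorem about the full Cremona group $\mathrm{Bir}(\mathbb{P}^n_\mathbb{C})$ --- the paper quotes it from \cite{Blanc} without proof --- and its conclusion is a pair of containments: any non-trivial closed normal subgroup $\mathrm{H}$ contains $\mathrm{Aut}(\mathbb{P}^n_\mathbb{C})$ \emph{and} $\mathrm{PSL}\big(2;\mathbb{C}(z_0,\ldots,z_{n-2})\big)$. What you set out to prove instead is the topological simplicity of $G_n(\Bbbk)$, i.e.\ the paper's Proposition~\ref{thm:ptesgpes3} via Proposition~\ref{pro:chouetteinclusion}. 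The two are not interchangeable: your reduction rests on Proposition~\ref{pro:ssgpenormeng}, which computes normal closures \emph{inside} $G_n$, and it says nothing about normal subgroups of $\mathrm{Bir}(\mathbb{P}^n_\mathbb{C})$, of which $G_n(\mathbb{C})$ is a proper subgroup for $n\geq 3$ (cf.\ Theorem~\ref{thm:hudsonpan}); accordingly the statement does not assert $\mathrm{H}=\mathrm{Bir}(\mathbb{P}^n_\mathbb{C})$, only the two containments. The second of these, $\mathrm{PSL}\big(2;\mathbb{C}(z_0,\ldots,z_{n-2})\big)\subset\mathrm{H}$, never appears in your text at all. It is in fact short once the first containment is known: writing $K=\mathbb{C}(z_0,\ldots,z_{n-2})$, the subgroup $\mathrm{H}\cap\mathrm{PSL}(2;K)$ is normal in the abstractly simple group $\mathrm{PSL}(2;K)$ and contains the non-trivial element $(z_0,\ldots,z_{n-2},z_{n-1}+1)$ of $\mathrm{Aut}(\mathbb{P}^n_\mathbb{C})\cap\mathrm{PSL}(2;K)$, hence equals $\mathrm{PSL}(2;K)$ --- but this must be said, and you do not say it.

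Even judged against the part you do address (getting $\mathrm{Aut}(\mathbb{P}^n_\mathbb{C})$ inside $\mathrm{H}$), the central step is conceded rather than proved: you call the degeneration "the main obstacle" and continue with "granting this". That step is the substance of Blanc's argument: pick $g\neq\mathrm{id}$ in $\mathrm{H}$ and a point $p$ where $g$ is a local isomorphism; replacing $g$ if necessary by a commutator $[g,\alpha]$ with $\alpha\in\mathrm{Aut}(\mathbb{P}^n_\mathbb{C})$ generic --- which stays in $\mathrm{H}$ by normality --- one arranges $g(p)=p$ with differential $dg_p\neq\mathrm{id}$; conjugating by the homotheties $\gamma_t$ centered at $p$ gives a morphism from $\mathbb{A}^1\smallsetminus\{0\}$ to $\mathrm{Bir}(\mathbb{P}^n_\mathbb{C})$, $t\mapsto\gamma_t g\gamma_t^{-1}$, extending at $t=0$ to $dg_p\in\mathrm{PGL}(n+1;\mathbb{C})$, and closedness of $\mathrm{H}$ puts this non-trivial automorphism in $\mathrm{H}$, after which simplicity of $\mathrm{PGL}(n+1;\mathbb{C})$ forces $\mathrm{Aut}(\mathbb{P}^n_\mathbb{C})\subset\mathrm{H}$. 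Note that non-triviality of the limit is secured here by the choice of $p$ and the commutator trick, not by "tracking an invariant pencil of lines" as you suggest --- a general $g$ preserves no such pencil, so that fallback would not work. In sum there are two genuine gaps relative to the quoted statement: the $\mathrm{PSL}(2;K)$ half of the conclusion is entirely missing, and the limit lemma that carries all the weight is assumed; what your sketch correctly outlines is instead the paper's Propositions~\ref{pro:chouetteinclusion} and~\ref{thm:ptesgpes3} for $G_n(\Bbbk)$.
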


In our context we have a similar statement:

\begin{pro}\label{pro:chouetteinclusion}
{\sl Let $n\geq 2$. Let $\mathrm{H}$ be a non-trivial, normal, and closed subgroup of~$G_n(\mathbb{C})$. Then $\mathrm{H}$ contains $\mathrm{Aut}(\mathbb{P}^n_\mathbb{C})$ and $\sigma_n$.}
\end{pro}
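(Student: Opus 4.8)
The plan is to reduce the whole statement to the single task of locating one non-trivial automorphism inside $\mathrm{H}$, and then to exploit the simplicity of $\mathrm{PGL}(n+1;\mathbb{C})$ together with the conjugacy of $\sigma_n$ and $-\mathrm{id}$. Indeed, suppose $\mathrm{H}$ contains some $\mathfrak{h}\in\mathrm{Aut}(\mathbb{P}^n_\mathbb{C})\smallsetminus\{\mathrm{id}\}$. Since $\mathrm{Aut}(\mathbb{P}^n_\mathbb{C})\subset G_n(\mathbb{C})$ and $\mathrm{H}$ is normal, the intersection $\mathrm{H}\cap\mathrm{Aut}(\mathbb{P}^n_\mathbb{C})$ is a non-trivial normal subgroup of $\mathrm{Aut}(\mathbb{P}^n_\mathbb{C})=\mathrm{PGL}(n+1;\mathbb{C})$; as this group is simple I obtain $\mathrm{Aut}(\mathbb{P}^n_\mathbb{C})\subset\mathrm{H}$. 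In particular $-\mathrm{id}\in\mathrm{H}$, and since $\sigma_n$ is a $G_n(\mathbb{C})$-conjugate of $-\mathrm{id}$ via the element $\psi=\mathfrak{a}_1\sigma_n\mathfrak{a}_2$ exhibited in the proof of Proposition \ref{pro:nonlin2}, normality of $\mathrm{H}$ gives $\sigma_n\in\mathrm{H}$. Thus the entire statement follows once a non-trivial automorphism has been found in $\mathrm{H}$, and it is here that the hypothesis that $\mathrm{H}$ is closed must be used, since $G_n(\mathbb{C})$ need not be simple as an abstract group.

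To produce such an automorphism I would argue by minimality of degree, adapting the proof of Proposition \ref{pro:blanc}. Choose $h\in\mathrm{H}\smallsetminus\{\mathrm{id}\}$ of minimal degree $d$, and assume for contradiction that $d\geq 2$, so that $h\notin\mathrm{Aut}(\mathbb{P}^n_\mathbb{C})$. For a one-parameter subgroup $t\mapsto\mathfrak{d}_t$ of the diagonal torus of $\mathrm{Aut}(\mathbb{P}^n_\mathbb{C})$, normality gives $\mathfrak{d}_t\,h\,\mathfrak{d}_t^{-1}\in\mathrm{H}$ for every $t\in\mathbb{C}^*$; this is an algebraic family in $\mathrm{H}$, each member of degree $d$ because conjugation by an automorphism preserves the degree. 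Since $\mathrm{H}$ is closed, the Zariski limit $h_0=\lim_{t\to0}\mathfrak{d}_t\,h\,\mathfrak{d}_t^{-1}$, whenever it exists in $G_n(\mathbb{C})$, again belongs to $\mathrm{H}$. As $t\to0$ the conjugating automorphism degenerates, base points of $h$ may collide, and the degree of the limit can only drop; the objective is to choose the weights of $\mathfrak{d}_t$, after first putting the base locus of $h$ into general position by an $\mathrm{Aut}(\mathbb{P}^n_\mathbb{C})$-conjugation, so that $\deg h_0<d$ while $h_0\neq\mathrm{id}$. Such an $h_0$ contradicts the minimality of $d$, forcing $d=1$ and hence $\mathrm{H}\cap\mathrm{Aut}(\mathbb{P}^n_\mathbb{C})\neq\{\mathrm{id}\}$.

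The hard part will be precisely the control of this degeneration. One must select the one-parameter subgroup so that the limit $h_0$ exists and stays inside $G_n(\mathbb{C})$ (a priori the limit of elements of $G_n(\mathbb{C})$ need not remain in $G_n(\mathbb{C})$), has strictly smaller degree, and is not the identity; a careless choice either leaves the degree equal to $d$ or collapses the family onto $h$ (for instance when $h$ centralizes the chosen torus) or onto $\mathrm{id}$. Securing the simultaneous degree drop and non-triviality demands a careful analysis of how the homogeneous components of the coordinates of $h$ transform under the torus action, in the spirit of the base-point bookkeeping used by Blanc in Proposition \ref{pro:blanc}. Once a non-trivial automorphism is obtained, the reduction of the first paragraph immediately yields both $\mathrm{Aut}(\mathbb{P}^n_\mathbb{C})\subset\mathrm{H}$ and $\sigma_n\in\mathrm{H}$.
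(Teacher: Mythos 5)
Your opening reduction and your closing step coincide with the paper's own route: the paper also first gets $\mathrm{Aut}(\mathbb{P}^n_\mathbb{C})\subset\mathrm{H}$ by an argument "as in \cite{Blanc}", and then obtains $\sigma_n\in\mathrm{H}$ from normality together with the conjugacy $\sigma_n=\psi\,(-\mathrm{id})\,\psi^{-1}$, $\psi=\mathfrak{a}_1\sigma_n\mathfrak{a}_2\in G_n(\mathbb{C})$, established in the proof of Proposition \ref{pro:nonlin2}. That part of your proposal, including the appeal to the simplicity of $\mathrm{PGL}(n+1;\mathbb{C})$ to pass from one nontrivial element of $\mathrm{H}\cap\mathrm{Aut}(\mathbb{P}^n_\mathbb{C})$ to all of $\mathrm{Aut}(\mathbb{P}^n_\mathbb{C})$, is correct.

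The genuine gap is in the middle step, which you announce but do not carry out: you never actually produce a nontrivial automorphism in $\mathrm{H}$, and the scheme you sketch is not just incomplete but structurally problematic for the reason you yourself flag. In a minimal-degree induction, the torus limit $h_0$ could a priori have degree strictly between $1$ and $d$; since $G_n(\mathbb{C})$ is a proper subgroup of $\mathrm{Bir}(\mathbb{P}^n_\mathbb{C})$ for $n\geq 3$, such an $h_0$ need not belong to $G_n(\mathbb{C})$ at all, and then the closedness of $\mathrm{H}$ \emph{in} $G_n(\mathbb{C})$ gives no information — so the induction that works for $\mathrm{Bir}(\mathbb{P}^n_\mathbb{C})$ is not available here, and your parenthetical "whenever it exists in $G_n(\mathbb{C})$" is precisely the unproved hypothesis. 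The standard way out, in the spirit of the argument of Proposition \ref{pro:blanc} that the paper invokes, is to degenerate in one step all the way to degree $1$, so that the limit is automatically linear and hence automatically in $G_n(\mathbb{C})$: starting from $h\in\mathrm{H}\smallsetminus\{\mathrm{id}\}$, pick a general point $p$ with $q=h(p)\neq p$, choose $\mathfrak{g}\in\mathrm{Aut}(\mathbb{P}^n_\mathbb{C})$ fixing $p$ and $q$ with suitably chosen differentials there, and replace $h$ by $h'=h\,(\mathfrak{g}h^{-1}\mathfrak{g}^{-1})\in\mathrm{H}$, which fixes $q$, is a local isomorphism at $q$, and can be arranged to have differential $Dh'_q\neq\mathrm{id}$; then conjugating by the homotheties $\mathfrak{d}_t\colon z\mapsto tz$ centered at $q$ gives an algebraic family $t\mapsto\mathfrak{d}_t^{-1}h'\mathfrak{d}_t$ that extends across $t=0$ (after clearing the common factor in the homogeneous coordinates) with value the invertible linear map $Dh'_q\in\mathrm{PGL}(n+1;\mathbb{C})$. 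This limit lies in $\mathrm{H}$ by closedness, is nontrivial by construction, and is linear, so it lies in $G_n(\mathbb{C})$ without further argument. Until you either execute such a one-step degeneration — securing simultaneously the existence of the limit as a birational map (invertibility of the differential), its nontriviality, and its linearity — or simply cite Blanc's lemma as the paper does, the central claim $\mathrm{H}\cap\mathrm{Aut}(\mathbb{P}^n_\mathbb{C})\neq\{\mathrm{id}\}$ remains unestablished, and with it the whole proposition.
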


\begin{proof}
A similar argument as in \cite{Blanc} allows us to prove that $\mathrm{Aut}(\mathbb{P}^n_\mathbb{C})$ is contained in $\mathrm{H}$.

The fact $-\mathrm{id}$ and $\sigma_n$ are conjugate in $G_n(\mathbb{C})$ (\emph{see} Proof of Proposition \ref{pro:nonlin2}) yields the conclusion.
\end{proof}

The proof of Proposition \ref{thm:ptesgpes3} follows from Proposition \ref{pro:chouetteinclusion} and Corollary \ref{cor:prodconj}.

\bibliographystyle{plain}

\bibliography{biblio}

\begin{thebibliography}{10}

\bibitem{AlberichCarraminana}
M.~Alberich-Carrami{\~n}ana.
\newblock {\em Geometry of the plane {C}remona maps}, volume 1769 of {\em
  Lecture Notes in Mathematics}.
\newblock Springer-Verlag, Berlin, 2002.

\bibitem{Birkhoff}
G.~Birkhoff.
\newblock Lie groups simply isomorphic with no linear group.
\newblock {\em Bull. Amer. Math. Soc.}, 42(12):883--888, 1936.

\bibitem{Blanc:gpefini}
J.~Blanc.
\newblock Linearisation of finite abelian subgroups of the {C}remona group of
  the plane.
\newblock {\em Groups Geom. Dyn.}, 3(2):215--266, 2009.

\bibitem{Blanc:ssgpalg}
J.~Blanc.
\newblock Sous-groupes alg\'ebriques du groupe de {C}remona.
\newblock {\em Transform. Groups}, 14(2):249--285, 2009.

\bibitem{Blanc}
J.~Blanc.
\newblock Groupes de {C}remona, connexit\'e et simplicit\'e.
\newblock {\em Ann. Sci. \'Ec. Norm. Sup\'er. (4)}, 43(2):357--364, 2010.

\bibitem{Blanc:relation}
J.~Blanc.
\newblock Simple relations in the {C}remona group.
\newblock {\em Proc. Amer. Math. Soc.}, 140(5):1495--1500, 2012.

\bibitem{BlancDeserti:degreegrowth}
J.~Blanc and J.~D{\'e}serti.
\newblock Degree growth of birational maps of the plane.
\newblock {\em Ann. Sc. Norm. Super. Pisa Cl. Sci. (5)}, To appear.

\bibitem{BlancHeden}
J.~Blanc and I.~Hed\'en.
\newblock The group of {C}remona transformations generated by linear maps and
  the standard involution, \texttt{arXiv:1405.2746}.
\newblock 2014.

\bibitem{BogomolovProkhorov}
F.~Bogomolov and Y.~Prokhorov.
\newblock On stable conjugacy of finite subgroups of the plane {C}remona group,
  {I}.
\newblock {\em Cent. Eur. J. Math.}, 11(12):2099--2105, 2013.

\bibitem{Cantat:tits}
S.~Cantat.
\newblock Sur les groupes de transformations birationnelles des surfaces.
\newblock {\em Ann. of Math. (2)}, 174(1):299--340, 2011.

\bibitem{Cantat}
S.~Cantat.
\newblock Morphisms between {C}remona groups and a characterization of rational
  varieties.
\newblock {\em Compos. Math.}, to appear.

\bibitem{CantatLamy}
S.~Cantat and S.~Lamy.
\newblock Normal subgroups in the {C}remona group.
\newblock {\em Acta Math.}, 210(1):31--94, 2013.
\newblock With an appendix by Yves de Cornulier.

\bibitem{CerveauDeserti:centralisateur}
D.~Cerveau and J.~D{\'e}serti.
\newblock Centralisateurs dans le groupe de {J}onqui\`eres.
\newblock {\em Michigan Math. J.}, 61(4):763--783, 2012.

\bibitem{CerveauDeserti}
D.~Cerveau and J.~D\'eserti.
\newblock {\em Transformations birationnelles de petit degr\'e}, volume~19 of
  {\em Cours Sp\'ecialis\'es}.
\newblock Soci\'et\'e Math\'ematique de France, Paris, 2013.

\bibitem{Coble}
A.~B. Coble.
\newblock Point sets and allied {C}remona groups. {II}.
\newblock {\em Trans. Amer. Math. Soc.}, 17(3):345--385, 1916.

\bibitem{Cornulier}
Y.~Cornulier.
\newblock Nonlinearity of some subgroups of the planar {C}remona group.
\newblock {\em Unpublished manuscript, 2013,
  \url{http://www.normalesup.org/~cornulier/crelin.pdf}}.

\bibitem{delaHarpe}
P.~de~la Harpe.
\newblock {\em Topics in geometric group theory}.
\newblock Chicago Lectures in Mathematics. University of Chicago Press,
  Chicago, IL, 2000.

\bibitem{Demazure}
M.~Demazure.
\newblock Sous-groupes alg\'ebriques de rang maximum du groupe de {C}remona.
\newblock {\em Ann. Sci. \'Ecole Norm. Sup. (4)}, 3:507--588, 1970.

\bibitem{Deserti:sln}
J.~D{\'e}serti.
\newblock Groupe de {C}remona et dynamique complexe: une approche de la
  conjecture de {Z}immer.
\newblock {\em Int. Math. Res. Not.}, pages Art. ID 71701, 27, 2006.

\bibitem{Deserti:autaut}
J.~D{\'e}serti.
\newblock Sur le groupe des automorphismes polynomiaux du plan affine.
\newblock {\em J. Algebra}, 297(2):584--599, 2006.

\bibitem{Deserti:autbir}
J.~D{\'e}serti.
\newblock Sur les automorphismes du groupe de {C}remona.
\newblock {\em Compos. Math.}, 142(6):1459--1478, 2006.

\bibitem{Deserti:hopfien}
J.~D{\'e}serti.
\newblock Le groupe de {C}remona est hopfien.
\newblock {\em C. R. Math. Acad. Sci. Paris}, 344(3):153--156, 2007.

\bibitem{Deserti:nilpotent}
J.~D{\'e}serti.
\newblock Sur les sous-groupes nilpotents du groupe de {C}remona.
\newblock {\em Bull. Braz. Math. Soc. (N.S.)}, 38(3):377--388, 2007.

\bibitem{DesertiHan}
J.~D{\'e}serti and F.~Han.
\newblock On cubic birational maps of $\mathbb{P}^3_\mathbb{C}$,
  \texttt{arXiv:1311.1891}.
\newblock 2013.

\bibitem{Dieudonne}
J.~Dieudonn{\'e}.
\newblock {\em La g\'eom\'etrie des groupes classiques}.
\newblock Ergebnisse der Mathematik und ihrer Grenzgebiete (N.F.), Heft 5.
  Springer-Verlag, Berlin, 1955.

\bibitem{DillerFavre}
J.~Diller and C.~Favre.
\newblock Dynamics of bimeromorphic maps of surfaces.
\newblock {\em Amer. J. Math.}, 123(6):1135--1169, 2001.

\bibitem{DolgachevIskovskikh}
I.~V. Dolgachev and V.~A. Iskovskikh.
\newblock Finite subgroups of the plane {C}remona group.
\newblock In {\em Algebra, arithmetic, and geometry: in honor of {Y}u. {I}.
  {M}anin. {V}ol. {I}}, volume 269 of {\em Progr. Math.}, pages 443--548.
  Birkh\"auser Boston Inc., Boston, MA, 2009.

\bibitem{Frumkin}
M.~A. Frumkin.
\newblock A filtration in the three-dimensional {C}remona group.
\newblock {\em Mat. Sb. (N.S.)}, 90(132):196--213, 325, 1973.

\bibitem{Hudson}
H.~P. Hudson.
\newblock {\em {C}remona {T}ransformations in {P}lane and {S}pace}.
\newblock Cambridge University Press. 1927.

\bibitem{Jung}
H.~W.~E. Jung.
\newblock \"{U}ber ganze birationale {T}ransformationen der {E}bene.
\newblock {\em J. Reine Angew. Math.}, 184:161--174, 1942.

\bibitem{KraftStampfli}
H.~Kraft and I.~Stampfli.
\newblock On automorphisms of the affine cremona group.
\newblock {\em Ann. Inst. Fourier (Grenoble)}, 63(3):1535--1543, 2013.

\bibitem{Lamy}
S.~Lamy.
\newblock On the genus of birational maps between $3$-folds,
  \texttt{arXiv:1305.2482}.
\newblock 2013.

\bibitem{Mundet}
I.~Mundet~i Riera.
\newblock Finite group actions on homology spheres and manifolds with nonzero
  euler characteristic, \texttt{arXiv:1403.0383}.
\newblock 2014.

\bibitem{Pan:33map}
I.~Pan.
\newblock Sur les transformations de {C}remona de bidegr\'e {$(3,3)$}.
\newblock {\em Enseign. Math. (2)}, 43(3-4):285--297, 1997.

\bibitem{Pan}
I.~Pan.
\newblock Une remarque sur la g\'en\'eration du groupe de {C}remona.
\newblock {\em Bol. Soc. Brasil. Mat. (N.S.)}, 30(1):95--98, 1999.

\bibitem{PanRongaVust}
I.~Pan, F.~Ronga, and T.~Vust.
\newblock Transformations birationnelles quadratiques de l'espace projectif
  complexe \`a trois dimensions.
\newblock {\em Ann. Inst. Fourier (Grenoble)}, 51(5):1153--1187, 2001.

\bibitem{PanSimis}
I.~Pan and A.~Simis.
\newblock Cremona maps, \texttt{arXiv:1403.1197}.
\newblock 2014.

\bibitem{Popov1}
V.~L. Popov.
\newblock Some subgroups of the {C}remona groups.
\newblock In {\em Affine algebraic geometry}, pages 213--242. World Sci. Publ.,
  Hackensack, NJ, 2013.

\bibitem{Popov2}
V.~L. Popov.
\newblock Tori in the {C}remona groups.
\newblock {\em Izv. Ross. Akad. Nauk Ser. Mat.}, 77(4):103--134, 2013.

\bibitem{Popov3}
V.~L. Popov.
\newblock Jordan groups and automorphism groups of algebraic varieties.
\newblock In {\em Automorphisms in {B}irational and {A}ffine {G}eometry},
  volume~79, pages 185--213. Springer Proceedings in Mathematics \& Statistics,
  2014.

\bibitem{Prokhorov:pelementarysubgoups}
Y.~Prokhorov.
\newblock {$p$}-elementary subgroups of the {C}remona group of rank 3.
\newblock In {\em Classification of algebraic varieties}, EMS Ser. Congr. Rep.,
  pages 327--338. Eur. Math. Soc., Z\"urich, 2011.

\bibitem{Prokhorov:simplefinitedim3}
Y.~Prokhorov.
\newblock Simple finite subgroups of the {C}remona group of rank 3.
\newblock {\em J. Algebraic Geom.}, 21(3):563--600, 2012.

\bibitem{Prokhorov:invP3}
Yu.~G. Prokhorov.
\newblock On birational involutions of {$\Bbb P^3$}.
\newblock {\em Izv. Ross. Akad. Nauk Ser. Mat.}, 77(3):199--222, 2013.

\bibitem{Shafarevich}
I.~R. {\v{S}}afarevi{\v{c}}, B.~G. Averbuh, Ju.~R. Va{\u\i}nberg, A.~B.
  {\v{Z}}i{\v{z}}{\v{c}}enko, Ju.~I. Manin, B.~G. Mo{\u\i}{\v{s}}ezon, G.~N.
  Tjurina, and A.~N. Tjurin.
\newblock Algebraic surfaces.
\newblock {\em Trudy Mat. Inst. Steklov.}, 75:1--215, 1965.

\bibitem{Serre}
J.-P. Serre.
\newblock Le groupe de {C}remona et ses sous-groupes finis.
\newblock {\em Ast\'erisque}, (332):Exp. No. 1000, vii, 75--100, 2010.
\newblock S{\'e}minaire Bourbaki. Volume 2008/2009. Expos{\'e}s 997--1011.

\bibitem{ShestakovUmirbaev}
I.~P. Shestakov and U.~U. Umirbaev.
\newblock The tame and the wild automorphisms of polynomial rings in three
  variables.
\newblock {\em J. Amer. Math. Soc.}, 17(1):197--227 (electronic), 2004.

\bibitem{Umemura1}
H.~Umemura.
\newblock Sur les sous-groupes alg\'ebriques primitifs du groupe de {C}remona
  \`a trois variables.
\newblock {\em Nagoya Math. J.}, 79:47--67, 1980.

\bibitem{Umemura3}
H.~Umemura.
\newblock Maximal algebraic subgroups of the {C}remona group of three
  variables. {I}mprimitive algebraic subgroups of exceptional type.
\newblock {\em Nagoya Math. J.}, 87:59--78, 1982.

\bibitem{Umemura2}
H.~Umemura.
\newblock On the maximal connected algebraic subgroups of the {C}remona group.
  {I}.
\newblock {\em Nagoya Math. J.}, 88:213--246, 1982.

\bibitem{VandenEssen}
A.~van~den Essen.
\newblock {\em Polynomial automorphisms and the {J}acobian conjecture}, volume
  190 of {\em Progress in Mathematics}.
\newblock Birkh\"auser Verlag, Basel, 2000.

\bibitem{Vinberg}
{\`E}.~B. Vinberg.
\newblock Algebraic transformation groups of maximal rank.
\newblock {\em Mat. Sb. (N.S.)}, 88(130):493--503, 1972.

\end{thebibliography}

\end{document}